\newtheorem{remark}[theorem]{\it Remark}
\newtheorem{example}[theorem]{\it Example}
\renewcommand{\hat}{\widehat}
\renewcommand{\tilde}{\widetilde}
\newcommand{\ri}{\mathrm{i}}
\DeclareMathAlphabet{\mathpzc}{OT1}{pzc}{m}{it}
\def\G{\mbox{\boldmath$G$}}
\def\G{\mbox{\boldmath$G$}}
\begin{document}

\title{Balanced truncation model reduction for symmetric second order systems -- A passivity-based approach\thanks{This work has been supported by DFG priority program 1897: ``Calm, Smooth and Smart -- Novel Approaches for Influencing Vibrations by Means of Deliberately Introduced Dissipation''.}}

\author{Ines~Dorschky\thanks{Corresponding author. Fachbereich Mathematik, Universit\"at Hamburg, Bundesstr. 55, 20146 Hamburg, Germany. E-Mail: \textbf{ines.dorschky@uni-hamburg.de}.} \and Timo~Reis\thanks{Fachbereich Mathematik, Universit\"at Hamburg, Bundesstr. 55, 20146 Hamburg, Germany. E-Mail: \textbf{timo.reis@uni-hamburg.de}.} \and Matthias Voigt\thanks{Fachbereich Mathematik, Universit\"at Hamburg, Bundesstr. 55, 20146 Hamburg, Germany. E-Mail: \textbf{matthias.voigt@uni-hamburg.de} and Institut f\"ur Mathematik, Technische Universit\"at Berlin, Str. des 17.~Juni 136, 10623 Berlin, Germany. E-Mail:  \textbf{mvoigt@math.tu-berlin.de}}.}

\maketitle	

\begin{abstract}
We introduce a model reduction approach for linear time-invariant second order systems based on positive real balanced truncation. Our method guarantees asymptotic stability and passivity of the reduced order model as well as the positive definiteness of the mass and stiffness matrices. Moreover, we receive an a~priori gap metric error bound. Finally, we show that our method based on positive real balanced truncation preserves the structure of overdamped second order systems.
\end{abstract}
	
\begin{section}{Introduction}

In this article, we consider linear second order systems with co-located inputs and outputs  of the form
\begin{align}\label{eqn:sosys}
M\ddot{p}(t) + D\dot{p}(t) + Kp(t)  = Bu(t),\quad
y(t) = B^{\top}\dot{p}(t)
\end{align}
with symmetric $M, D, K \in \mathbb{R}^{n \times n}$, where the {\em mass matrix} $M$ and the {\em stiffness matrix} $K$ are positive definite, while the {\em damping matrix} $D$ is positive semidefinite and $B \in \mathbb{R}^{n \times m}$.
In applications, the state space dimension $n$ typically becomes unfeasibly large for  simulation, optimization, or control. This causes a demand for a good approximation
of the system. Therefore, an upper bound in a~certain quality measure is desirable for a model order reduction method. Such a~quality measure can be some norm of the error system, such as the
$H^{\infty}$ norm for stable systems, or the so-called {\em gap
metric}~\cite{GeorSmit90a,GeorSmit90b} between original and reduced order system. The latter expresses the distance between the input-output trajectories of two systems. On the other hand, our model has three key attributes that a reduced model should inherit, one of these being
the second order structure together with the symmetries, meaning the reduced system should be of the form
\begin{align}\label{eqn:redsosys}
\begin{aligned}
\wtM\ddot{\wtp}(t) + \wtD\dot{\wtp}(t) + \wtK \wtp(t) & = \widetilde{B} u(t), &
\wty(t) & = \widetilde{B}^{\top}\dot{\wtp}(t),
\end{aligned}
\end{align}
with symmetric $\wtM, \ \wtD,\, \wtK \in \mathbb{R}^{r \times r}$, and
$\widetilde{B} \in \mathbb{R}^{r \times m}$, where $r\ll n$. Other physically meaningful
properties are asymptotic stability and passivity, which should also be preserved. Passivity describes the property that the energy in the system is solely induced by input and output and may be dissipated or conserved in the system.

Some progress has been made in~\cite{ReisStyk08, BennSaak11} and preservation
of physical properties in the reduced order model is possible for
co-located inputs and outputs.
Further second order reduction methods have been developed in~\cite{SaliLohm06,ChahlLemo06, BaiSu05, Freu05, MeyeSrin96, HartVulc10, BennSaak11} (see also \cite{ReisStyk08} for an overview). Besides these, there exists interpolatory methods, which succeed either in preserving the second order structure~\cite{Sali05} or deliver a posteriori $H^{\infty}$ error bounds~\cite{PanzWolfLohm13}. However, all the approaches mentioned lack a combination of the two.

These approaches have all in common that the ansatz for the reduced order model is $p(t)= W_r\tp(t)$ for some ``tall matrix'' $W_r\in\mathbb{R}^{n\times r}$ and an accordant multiplication of the state equation in \eqref{eqn:redsosys} from the left with some ``flat'' matrix  $V_r\in\mathbb{R}^{r\times n}$. Our approach is somewhat different from these. Namely, we first perform a reduction of the first order representation, and accordingly carry out a~transformation yielding a~second order model. This corresponds to a~reduction ansatz  $p(t)= W_{r,1}\wtp(t)+W_{r,2}\dot{\wtp}(t)$ together with a~linear combination of the state equation and its derivative such that again a~second order system is obtained. The basis for our considerations is the technique of \emph{positive real balanced truncation}, which is a~passivity-preserving method for first order systems, see~\cite{DesaPal82}.
Moreover, an a~priori error bound in the gap metric is provided \cite{GuivOpme13}.
We present a modification of positive real balanced truncation, which preserves the second order structure, symmetry of the system matrices, stability, passivity as well as positive definiteness of the mass, and stiffness matrices. Namely, we show that, under an extra condition on the so called \emph{zero sign characteristics}, the reduced order model obtained by positive real balanced truncation of a~first order realization of \eqref{eqn:sosys} can be transformed to a~second order realization of the form \eqref{eqn:redsosys}.
To this end, we use the theory of \emph{standard triples} by {\sc Gohberg}, {\sc Lancaster} and {\sc Rodman} \cite{GohbLanc82a} to derive a second order realization~\eqref{eqn:redsosys} in which the mass and stiffness matrices are positive definite, and the damping matrix has at least $r-m$ positive eigenvalues provided that $r\geq m$. Furthermore, if the original system is \emph{overdamped}, see~\eqref{overdamping condition},
our method will further produce a~reduced order model with positive definite damping matrix.

{\bf Our method in a~nutshell.} The model reduction technique presented in this article is -- in theory -- consisting of three steps:

{\it Step~1:} For $H,G\in\Rnn$ with $M=HH\T$ $K=GG\T$, we set $x(t):=\begin{bsmallmatrix}
G^{\top}H^{-\top}p(t) \\ \dot{p}(t)
\end{bsmallmatrix}$ and rewrite the second order system \eqref{eqn:sosys} as
\begin{equation} \label{eq:linsys1}
\dot{x}(t)
= \underbrace{\begin{bmatrix}
	0 & G\T H^{-\top}  \\ -H^{-1}G & -H^{-1}DH^{-\top}
	\end{bmatrix}}_{=:\cA} x(t)
+ \underbrace{\begin{bmatrix} 0 \\ B \end{bmatrix}}_{=:\cB}u(t), \quad
y(t)  = \underbrace{\begin{bmatrix} 0 & B\T
	\end{bmatrix}}_{=:\cC}x(t).
\end{equation}
The most important feature of this system is that it has an internal symmetry structure $\cA\sS_n=\sS_n \cA^\top$ and $\cC=\cB^{\top}=\cB^\top \sS_n$, where
$\sS_n:=\diag(-I_n,I_n)$. In particular, its \emph{transfer function} $\tfG(s) =\cC(sI-\cA)^{-1}\cB$ is symmetric, i.\,e., it fulfills $\tfG(s) ^{\top}=\tfG(s) $. We will make heavy use of this symmetry structure.

{\it Step~2:} We apply positive real balanced truncation \cite{GuivOpme13} to the first order system \eqref{eq:linsys1}. The internal symmetry structure of \eqref{eq:linsys1} yields that positive real balanced truncation can be done by determining only one (instead of two) solutions of the K\'alm\'an-Yakubovich-Popov inequality. We show that the positive real characteristic values, see Definition~\ref{prbip}~b), can -- in a~certain sense -- be allocated to the symmetry structure of the system \eqref{eq:linsys1}. This is the basis for our finding that the resulting first order model is -- without putting any further computational effort -- of the form
\begin{equation}\label{balanced so0}
\begin{aligned}	\dot{\tx}(t)=&
\begin{bmatrix}
	0 & 0 & 0 & 0 & 0& \cA_{16} \\
	0  & 0 & 0 &0 & \cA_{25} & \cA_{26} \\
	0 & 0 & \cA_{33} & \cA_{34} & 0& \cA_{36} \\
	0 & 0 & -\cA_{34}^{\top} & \cA_{44} & 0 & \cA_{46} \\
    0 & -\cA_{25}^{\top} & 0 & 0 & 0 & 0 \\
	-\cA_{16}^{\top} & -\cA_{26}^{\top} & -\cA_{36}^{\top} & \cA_{46}^{\top} & 0 & \cA_{66}
	\end{bmatrix}{\tx}(t)+\begin{bmatrix}
	0 \\ 0 \\ 0 \\ 0 \\ 0 \\ \cB_6
	\end{bmatrix}u(t),\\
\ty(t)=&\begin{bmatrix}
	0 & 0 & 0 & 0 & 0 & \cB_6^\top
	\end{bmatrix}\tx(t),
\end{aligned}
\end{equation}
where the block sizes from left to right and from top to bottom are $m,\ell,p,p,\ell,m$, with $r=p+m+\ell$. Note that, if $\cA_{33}$ is zero, then
it would -- by merging some of the blocks -- be of the form
\begin{equation} \label{eq:redsys1}
\begin{aligned}
\dot{\wtx}(t)
=&\begin{bmatrix}
	0 & \wtG\T  \\ -\wtG & -\wtD
	\end{bmatrix}\wtx(t)+
\begin{bmatrix} 0 \\ \wtB \end{bmatrix}u(t),\\
\wty(t)=&\begin{bmatrix} 0 & \wtB^\top \end{bmatrix}\wtx(t),
\end{aligned}
\end{equation}
which would result in a~reduced second order model \eqref{eqn:redsosys} with $\wtM=I_r$, $\wtK=\wtG \wtG^\top$ and $\wtD=\wtD\T$. This is regrettably not the case in general, whence we apply

{\it Step~3:} We apply a state space transformation to~\eqref{balanced so0} such that the matrix $\cA_{33}$ vanishes. More precisely, we first intend to find some $\hT\in\Gl_{2p}(\R)$ that preserves the symmetry structure, i.\,e., it fulfills $\hT^\top \sS_p \hT=\sS_p$, and
\begin{equation}\label{T hat}
\hT^{-1}\begin{bmatrix} \cA_{33}&\cA_{34}\\-\cA_{34}^\top&\cA_{44}\end{bmatrix}\hT=\begin{bmatrix} 0&\hat{\cA}_{34}\\-\hat{\cA}_{34}^\top&\hat{\cA}_{44}\end{bmatrix}.
\end{equation}
Then a~state space transformation with
$T=\diag(I_{m+\ell},\hT,I_{\ell+m})$
leads to a~system which is indeed of the form \eqref{eq:redsys1} and can then be rewritten as a~second order system.
To derive such a~transformation, we have to use techniques from indefinite linear algebra. In particular, the aforementioned symmetry structure of our system provides that each real (invariant) zero of the system can be assigned a~signature, see Definition~\ref{def:polezerosignchar}. If the reduced system is minimal, the zeros with positive signature are given by some $\mu^+_1\leq\ldots\leq \mu_k^+<\mu^+_{k+1}=\ldots=\mu^+_{k+m}=0$ and those with negative signature by $\mu^-_1\leq\ldots\leq \mu_k^-<0$. Our main results on the existence and construction of such transformations are the following (see Theorem~\ref{thm:eq cond so}): suppose the reduced system is minimal, then the following are equivalent:

\begin{enumerate}[a)]
\item The system~\eqref{balanced so0} can be written in second order  form~\eqref{eq:redsys1}.
\item It exists a $\hT\in\Gl_{2p}(\R)$ with $\hT^\top \sS_p \hT=\sS_p$, that fulfills~\eqref{T hat}.
\item For $i=1,\ldots,k$ it holds that $\mu_i^-<\mu_i^+$.
\end{enumerate}

Here, the implication "a)$\Rightarrow$c)" is based on \cite[Thm.~16]{LancZaba14}.
If the reduced system is not minimal or c) is not fulfilled, we add equations to the system in a fashion that the newly formed system is also stable, passive and such that the transfer function is preserved. Here, in the worst case the number of states, i.\,e., the size of the mass matrix, doubles.\\

{\bf Outline of the article.} The article is structured as follows. In Section~\ref{sec:prelim} we introduce the passivity preserving balanced truncation and some background material from systems theory. In Section~\ref{sec: prbt so} we present our positive real balanced truncation ansatz for second order systems and prove some of its main features. What is left, the derivation of the balanced form of second order systems, namely~\eqref{balanced so0}, is done in Section~\ref{sec:balancing}. Before we focus on the reconstruction of the second order structure and the proof of the necessary condition on the zeros in Section~\ref{sec:constMDK}, Section~\ref{sec:prelim indef lina} introduces some necessary concepts from indefinite linear algebra. Section~\ref{sec:overdamped} considers the case that the original system is overdamped. Last but not least, in Section~\ref{sec:algorithm} we summarize our method in a~numerical procedure and hereby also discuss the numerical treatment.\\

{\bf Notations.} The set of natural numbers including zero is denoted by $\N$. The symbols $\R[s]$ and $\R(s)$ respectively stand for the ring of real polynomials and the field of real rational functions.
We denote by $\mathbb{C}^+$ and $\overline{\mathbb{C}^+}$ the open and closed complex half-plane. Further, $\tfG(s)\in\R(s)^{m\times m}$ is called {\em proper} if $\lim_{s\rightarrow\infty}\tfG(s) <\infty$ and {\em strictly proper} if the latter limit is zero.
The transpose and conjugate transpose of $T\in\C^{m\times n}$ are denoted by $T\T$ and $T^*$, respectively.
For symmetric matrices $X,Y\in\R^{n\times n}$, we write $X>Y$ if $X-Y$ is positive definite and $X\geq Y$ if $X-Y$ is positive semidefinite.

We call $S=\diag(\varepsilon_1,\ldots,\varepsilon_n)$, where $\varepsilon_i=\pm1$ for $i=1,\ldots,n$, a \emph{signature matrix} and make frequent use of the signature matrix $\sS_n:=\diag(-I_n,I_n)\in\R^{2n\times 2n}$.

The symbol $\Gl_n(\R)$ stands for the set of invertible $n\times n$ matrices with entries in $\R$. By $\cR H^{\infty}(\mathbb{C}^{m\times m})$, we denote the space of proper elements of $\R(s)^{m\times m}$ with entries having no poles in $\overline{\mathbb{C}^+}$, where $\cR H^{2}(\mathbb{C}^{m})$ the space of strictly proper elements of $\R(s)^{m\times m}$ with entries having no poles in $\overline{\mathbb{C}^+}$. Note that  $\cR H^{\infty}(\mathbb{C}^{m\times m})$ becomes a~normed space when equipped with the norm
\[\|\tfG(s) \|_\infty:=\sup_{\omega\in\R}\|\tfG(\ri\omega)\|_2.\]
Moreover, $\cR H^{2}(\mathbb{C}^{m})$ is an inner product space provided with the norm
\[\|u(s)\|_2^2:=\int_\R\|u(\ri\omega)\|_2^2\mathrm{d}\omega.\]
Here, $\|\cdot\|_2$ on the right-hand side stands for the maximum singular value of matrices and the Euclidean norm of vectors, respectively.

\end{section}


\begin{section}{Positive real balanced truncation of first order systems}\label{sec:prelim}
We revisit positive real balanced truncation for linear time-invariant first order systems
\begin{equation}
	\dot{x}(t) = \cA x(t)+\cB u(t), \quad
	y(t)=\cC x(t)+\cD u(t),
	\label{eq:linear ODE}
\end{equation}
with $\cA\in\R^{n\times n}$, $\cB,\cC\T\in\R^{n\times m}$ and $\cD\in\Rmm$. The dynamical system~\eqref{eq:linear ODE} is said to be \emph{minimal} if it is both controllable and observable. The \emph{transfer function} is given by $\tfG(s) =\cC(sI_n-\cA)^{-1}\cB+\cD\in\R(s)^{m\times m}$. We also speak of~\eqref{eq:linear ODE} as a realization of $\tfG(s) $ and use the notation $[\cA,\cB,\cC,\cD]$ to refer to this system, or if $\cD=0$ we write $[\cA,\cB,\cC]$. We call $\mu\in\C$ an~{\em (invariant) zero of $[\cA,\cB,\cC,\cD]$}, if
\begin{equation*}
\rank_\C\begin{bmatrix}-\mu I_n+\cA&\cB\\\cC&\cD\end{bmatrix}<\rank_{\R(s)}\begin{bmatrix}-s I_n+\cA&\cB\\\cC&\cD\end{bmatrix}.
\end{equation*}
In other words, the set of zeros of $[\cA,\cB,\cC,\cD]$ equals to the set of eigenvalues of the matrix pencil $\left[\begin{smallmatrix}-s I_n+\cA&\cB\\\cC&\cD\end{smallmatrix}\right]$. 
If the latter pencil is square and invertible as a~matrix over $\R(s)$ (which is, by taking the Schur complement, equivalent to the transfer function being invertible), then $\mu\in\C$ is a~zero of $[\cA,\cB,\cC,\cD]$, if and only if there exists some $v\in\C^{n+m}\setminus\{0\}$ with $\left[\begin{smallmatrix}-\mu I_n+\cA&\cB\\\cC&\cD\end{smallmatrix}\right]v=0$.

In the following we deal with systems having {\em positive real} transfer functions.
That is,
\begin{enumerate}[a)]
	\item $\tfG(s) $ has no poles in $\C^+$.
	\item $\tfG(\lambda)+\tfG(\lambda)^{*}\geq 0$ for all $\lambda\in\C^+$.
\end{enumerate}
If the inequality in (ii) is strict, $\tfG(s) $ is called \emph{strictly positive real}.
If $\tfG(s) $ is positive real and invertible, also $\tfG^{-1}(s)$ is positive real. As a consequence, realizations $[\cA,\cB,\cC,\cD]$ of positive real functions with the property that all eigenvalues of $\cA$ have nonpositive real part do not have any zeros in $\C^+$. In particular, minimal realizations of positive real transfer functions do not have any zeros in $\C^+$. The famous positive real lemma draws a link between positive realness of the transfer function and the solvability of a certain linear matrix inequality.

\begin{theorem}[Positive real lemma {\cite[Chap.~V]{AndeVong73}}]\label{PRL}
	Let $[\cA,\cB,\cC,\cD]$ be a~minimal system of the form~\eqref{eq:linear ODE}, with transfer function $\tfG(s) \in\R(s)^{m\times m}$. Then $\tfG(s) $ is positive real, if and only if there exists some $P>0$, such that the {\em K\'alm\'an-Yakubovich-Popov inequality (KYP)}
	\begin{equation}\label{KYP}
		\sW_{[\cA,\cB,\cC,\cD]}(P):=
		\begin{bmatrix}
		\cA^{\top}P+P \cA & P \cB-\cC^{\top} \\
		\cB^{\top}P-\cC & -\cD-\cD\T
		\end{bmatrix}\leq 0
	\end{equation}
	is fulfilled. Moreover, there exists a \emph{minimal} solution $P_{\min}> 0$ and a~\emph{maximal} solution $P_{\max}>0$ of~$\sW_{[\cA,\cB,\cC,\cD]}(P)\leq0$, i.\,e., for all other solutions $P$ of~\eqref{KYP} it holds that $P_{\max}\geq P\geq P_{\min}$.
\end{theorem}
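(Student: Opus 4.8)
The plan is to prove the two implications separately and then extract the extremal solutions from a variational characterization. Throughout I write $\sW(P)$ for $\sW_{[\cA,\cB,\cC,\cD]}(P)$.

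For the easy direction, that solvability of~\eqref{KYP} implies positive realness, I would argue directly with the quadratic form attached to $\sW(P)$ and would not even need minimality. The $(1,1)$-block of $\sW(P)\leq 0$ reads $\cA^\top P+P\cA\leq 0$, and combined with $P>0$ this is a Lyapunov inequality forcing every eigenvalue of $\cA$ into the closed left half-plane; hence $\tfG(s)=\cC(sI_n-\cA)^{-1}\cB+\cD$ has no poles in $\C^+$, which is condition a). For condition b), fix $\lambda\in\C^+$ and $u\in\C^m$, put $x:=(\lambda I_n-\cA)^{-1}\cB u$ and $y:=\cC x+\cD u=\tfG(\lambda)u$. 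Substituting $\cA x=\lambda x-\cB u$ into the form $[x^*,u^*]\,\sW(P)\,[x;u]$ makes the $P\cB$ and $\cC$ cross terms cancel and collapses the $\cD$ terms, leaving the identity
\begin{equation*}
\begin{bmatrix} x \\ u \end{bmatrix}^*\!\sW(P) \begin{bmatrix} x \\ u \end{bmatrix} = 2\,\mathrm{Re}(\lambda)\,x^*Px - u^*\bigl(\tfG(\lambda)+\tfG(\lambda)^*\bigr)u.
\end{equation*}
Since the left-hand side is nonpositive while $\mathrm{Re}(\lambda)>0$ and $P>0$, I read off $u^*(\tfG(\lambda)+\tfG(\lambda)^*)u\geq 2\,\mathrm{Re}(\lambda)\,x^*Px\geq 0$, which is b).

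For the converse I would use Willems' dissipativity framework rather than a Riccati/Hamiltonian reduction, because the realization of interest in this paper has $\cD=0$, so the $(2,2)$-block $-\cD-\cD^\top$ is not negative definite and the Schur-complement route is unavailable. I would introduce the \emph{available storage} $V_a(x_0)=\sup\{-\int_0^T 2\,u^\top y\,\mathrm{d}t : T\geq 0,\ x(0)=x_0\}$ and the \emph{required supply} $V_r(x_0)=\inf\{\int_{-T}^0 2\,u^\top y\,\mathrm{d}t : T\geq 0,\ x(-T)=0,\ x(0)=x_0\}$, both for the supply rate $2\,u^\top y$. Positive realness is equivalent to passivity of the minimal realization with respect to this supply rate, which guarantees finiteness of $V_a$, while controllability guarantees finiteness of $V_r$. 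For a linear system with a quadratic supply rate both functionals are quadratic forms $V_a(x)=x^\top P_{\min}x$ and $V_r(x)=x^\top P_{\max}x$, and the dissipation inequality they satisfy is exactly the integrated version of $\tfrac{\mathrm{d}}{\mathrm{d}t}(x^\top Px)\leq 2\,u^\top y$, whose infinitesimal form is $[x^\top,u^\top]\,\sW(P)\,[x;u]\leq 0$ for all $x,u$, i.e. $\sW(P)\leq 0$.

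The extremal ordering then follows from the variational definitions: any storage function $x^\top Px$ with $P$ solving~\eqref{KYP} is sandwiched pointwise as $V_a\leq x^\top Px\leq V_r$, which translates into $P_{\max}\geq P\geq P_{\min}$; this is the standard principle that the available storage is the smallest and the required supply the largest storage function. Positive definiteness of $P_{\min}$ uses observability (a nonzero state from which no energy can ever be extracted would be unobservable) and that of $P_{\max}$ uses controllability, so minimality delivers $P_{\min},P_{\max}>0$. I expect the main obstacle to be precisely the finiteness and quadratic-form structure of $V_a$ — equivalently, the existence of \emph{at least one} positive definite solution of~\eqref{KYP} — since this is the genuine analytic content. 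An equivalent, more constructive route for this single step is spectral factorization: positive realness makes $\Phi(s):=\tfG(s)+\tfG(-s)^\top$ para-Hermitian and positive semidefinite on the imaginary axis, hence $\Phi(s)=W(-s)^\top W(s)$ for some Hurwitz $W$, and matching a state-space realization of $W$ against that of $\tfG$ produces an explicit $P$. The delicate point there is the existence of a stable spectral factor when $\Phi$ loses rank on the imaginary axis — the singular case $\cD=0$ — which is exactly the situation relevant to this paper.
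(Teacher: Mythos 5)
Your proposal is essentially correct, but note that the paper does not prove this statement at all: Theorem~\ref{PRL} is imported verbatim from \cite[Chap.~V]{AndeVong73}, so the comparison can only be made with the cited literature. Your easy direction is complete and sound: the Lyapunov argument on the $(1,1)$-block excludes poles in $\C^+$, and substituting $x=(\lambda I_n-\cA)^{-1}\cB u$ into the quadratic form does produce exactly the identity you state (the $P\cB$ cross terms cancel and the $\cC,\cD$ terms assemble into $\tfG(\lambda)+\tfG(\lambda)^*$), giving property b) without any minimality assumption. For the converse you take Willems' dissipativity route, with the available storage and required supply as the extremal quadratic storage functions, whereas Anderson--Vongpanitlerd argue via spectral factorization of $\Phi(s)=\tfG(s)+\tfG\T(-s)$ and an explicit matching of realizations to construct $P$; the ``constructive alternative'' you sketch for the hard step is thus essentially the cited proof itself, so the two routes converge at the analytic core. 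Two points deserve care in your write-up. First, the phrase ``positive realness is equivalent to passivity of the minimal realization'' is, taken literally, the theorem being proved; in Willems' framework the step one actually establishes is that the frequency-domain inequality together with controllability implies finiteness of $V_a$ and $V_r$ and their quadratic-form structure --- you correctly isolate this as the genuine content rather than assuming it. Second, the ``delicate point'' you flag --- existence of a stable spectral factor when $\Phi$ loses rank on $\iunit\R$, i.e.\ the singular case $\cD=0$ relevant here --- is not an open subtlety but is settled by Youla's factorization theorem, which yields a factor of full normal rank analytic in the closed right half-plane; this is also how \cite{AndeVong73} treat it. The remaining steps --- the sandwich $V_a(x)\le x\T Px\le V_r(x)$ translating into $P_{\min}\le P\le P_{\max}$, and $P_{\min}>0$ from observability (a state with $P_{\min}x_0=0$ yields, via the factored dissipation identity, zero output under zero input, hence $x_0=0$) --- are standard and correct, though stated at sketch level.
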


The KYP inequality admits the so-called {\em dissipation inequality}. That is, for all locally square integrable solutions $(x,u,y)$ of $[\cA,\cB,\cC,\cD]$ and $t>0$ it holds that
\begin{equation}	
	x(t)^\top P x(t)\leq x(0)^\top P x(0)+\int_0^{t} y(\tau)^\top u(\tau)\dtau.\label{eq:dissineq}
\end{equation}

Such systems are also called {\em passive}.
Since $\tfG(s) ^\top$ is the transfer function of $[\cA^\top,\cC^\top,\cB^\top,\cD\T]$, this system is passive as well. Therefore, the dual KYP inequality $\sW_{[\cA\T,\cC\T,\cB\T,\cD\T]}(Q)\leq0$ has again a~minimal solution. Moreover, $P>0$ solves $\sW_{[\cA,\cB,\cC,\cD]}(P)\leq0$, if and only if $P^{-1}$ is a~solution of $\sW_{[\cA\T,\cC\T,\cB\T,\cD\T]}(Q)\leq0$. As a~consequence, if $P_{\min}$ is the minimal such solution in the sense of Theorem~\ref{PRL}, then $P_{\min}^{-1}$ is the maximal solution of the dual KYP inequality.
\begin{definition}[positive real balanced, internally passive]\label{prbip}
	With the notation of Theorem~\ref{PRL}, a~system $[\cA,\cB,\cC,\cD]$ is called
	\begin{enumerate}[a\emph{)}]
	  \item \emph{positive real balanced}, if for the minimal solutions $P_{\min}$, $Q_{\min}$  of the KYP inequalities $\sW_{[\cA,\cB,\cC,\cD]}(P)\leq0$ and $\sW_{[\cA^\top,\cC^\top,\cB^\top,\cD\T]}(Q)\leq0$ we have
	\[P_{\min}=Q_{\min}=\diag(\sigma_1 I_{n_1},\ldots,\sigma_{h} I_{n_h}),\]
	where $\sigma_1,\ldots,\sigma_{h}$ are distinct values in $(0,1]$. The latter are called the \emph{positive real characteristic values} of $[\cA,\cB,\cC,\cD]$.\\
	  \item  \emph{internally passive}, if $\sW_{[\cA,\cB,\cC,\cD]}(I_n)\leq0$.
	\end{enumerate}
\end{definition}

In contrast to the conventional definition of positive real balanced, we do not assume that the positive real characteristic values of $[\cA,\cB,\cC,\cD]$ are ordered.

Note that internal passivity implies that in the dissipation inequality \eqref{eq:dissineq}, the quadratic form with $P$ is the square of the norm of the state. It has been shown in Theorem 7 of~\cite{ReisWill11} that any~positive real balanced realization is internally passive.

If there exist minimal solutions $P_{\min}\geq0$, $Q_{\min}\geq0$ of the KYP inequalities\linebreak $\sW_{[\cA,\cB,\cC,\cD]}(P)\leq0$ and $\sW_{[\cA^\top,\cC^\top,\cB^\top,\cD\T]}(Q)\leq0$, then a~certain state space transformation leads to a~positive real balanced realization. We refer to this as {\em positive real balancing}.
Our main emphasis is on \emph{positive real balanced truncation}, that is balancing the system and accordingly removing the blocks corresponding to some positive real characteristic values. This leads to a~reduced system which is passive and if the transfer function of the original system is strictly positive real, is again  asymptotically stable and positive real balanced, see Theorem 4 or Lemma 2 and Theorem 1 in~\cite{HarsJonc83}.
Both steps can be done at once while also removing the uncontrollable and unobservable parts \cite{ToPo87}. Namely, by using factorizations $P_{\min}=L\T L, \ Q_{\min}=R\T R$ and the singular value decomposition
\begin{equation}\label{SVD LRT}
	LR\T=\begin{bmatrix}
	U_1 & U_2
	\end{bmatrix}\begin{bmatrix}
	\Sigma_1 & 0 \\ 0 & \Sigma_2
	\end{bmatrix}\begin{bmatrix}
	Z_1 \\ Z_2
	\end{bmatrix},
\end{equation}
we are able to define the reduction matrices $W\T =\Sigma_1^{-1/2}Z_1 L$ and $V=R\T U_1\Sigma_1^{-1/2}$. A reduced model received from positive real balanced truncation is
$[W\T \cA V,W\T \cB,\cC V,\cD]$. If we do not truncate any singular values, then we obtain a~minimal system as the following lemma shows.

\begin{lemma}\label{lem:prb minimal}
	Let a~system $[\cA,\cB,\cC,\cD]$ with positive real transfer function $\tfG(s) $ be given. Assume that $L$ and $R$ are matrices with full row rank and $P_{\min}=L\T L, \ Q_{\min}=R\T R$, where $P_{\min}$, $Q_{\min}$ are minimal solutions of the KYP inequalities $\sW_{[\cA,\cB,\cC,\cD]}(P)\leq0$ and $\sW_{[\cA^\top,\cC^\top,\cB^\top,\cD\T]}(Q)\leq0$. Further, let $LR\T=U\Sigma Z$ be a~singular value decomposition and let $W\T =\Sigma^{-1/2}Z L$, $V=R\T U\Sigma^{-1/2}$. Then
	$[W\T \cA V,W\T \cB,\cC V,\cD]$ is a~minimal and positive real balanced realization of $\tfG(s) $. \\ 
	Moreover, for any further minimal positive real balanced realization	$[\hat{\cA},\hat{\cB},\hat{\cC},D]$ of  $\tfG(s) $ with positive real characteristic values in the same order, there exist orthogonal matrices $Q_{i}\in\R^{n_i\times n_i}$ for $i=1,\ldots,h$ such that for $T:=\diag(Q_{1},Q_2,\ldots,Q_{h})$, $\hat{\cA}=T^{-1}\cA T$, $\hat{B}=T^{-1}\cB$ and $\hat{\cC}=\cC T$.
\end{lemma}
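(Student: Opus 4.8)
The plan is to treat the two assertions separately. For the first one I would verify, in order, (i) that $W\T$ and $V$ form a biorthogonal pair, i.e.\ $W\T V=I_N$ with $N$ the size of $\Sigma$; (ii) that $\Sigma$ solves both the reduced primal inequality $\sW_{[W\T\cA V,W\T\cB,\cC V,\cD]}(P)\le0$ and its reduced dual, and is in fact the \emph{minimal} solution of each; and (iii) that the reduced realization is minimal. Property (i), together with the fact that in the non\mbox{-}truncated case the projection only discards the uncontrollable and unobservable subspaces (the framework of~\cite{ToPo87} recalled above), gives that $[W\T\cA V,W\T\cB,\cC V,\cD]$ realizes $\tfG(s)$, while (ii) is then exactly positive real balancedness in the sense of Definition~\ref{prbip}~a).

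For the algebra behind (i) and (ii) I would substitute the singular value decomposition $LR\T=U\Sigma Z$ and use $U\T U=I$ and $ZZ\T=I$, pairing each reduction matrix with the appropriate factor. This yields at once $W\T V=I_N$ and the two clean identities $W\T P_{\min}W=\Sigma$ and $V\T Q_{\min}V=\Sigma$. To see that $\Sigma$ solves the reduced primal KYP inequality, I would write $\sW_{[W\T\cA V,\dots]}(\Sigma)$ as a congruence transform of $\sW_{[\cA,\cB,\cC,\cD]}(P_{\min})\le0$ built from the reduction matrices and $I_m$, so that negative semidefiniteness is inherited; the dual statement follows symmetrically from $Q_{\min}$ and the system $[\cA\T,\cC\T,\cB\T,\cD\T]$. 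The genuinely delicate point is to show that $\Sigma$ is the \emph{minimal} (not merely some) solution of each reduced inequality and that the realization is minimal: here I would relate $\rank(LR\T)$ to the McMillan degree and use that a state-space congruence is an order-preserving bijection of the solution set of the KYP inequality, so that minimal solutions are mapped to minimal solutions. This minimality bookkeeping is the main obstacle in the first part, and it is where the structure theory of positive real balancing (Theorem~\ref{PRL} and~\cite{HarsJonc83,ToPo87}) does the real work.

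For the second assertion I would argue as follows. Since $[\cA,\cB,\cC,\cD]$ and $[\hat\cA,\hat\cB,\hat\cC,D]$ are two minimal realizations of the same $\tfG(s)$, they are related by a unique state-space similarity $T\in\Gl_n(\R)$ with $\hat\cA=T^{-1}\cA T$, $\hat\cB=T^{-1}\cB$, $\hat\cC=\cC T$. Because a similarity acts on the solution set of the KYP inequality by the congruence $P\mapsto T\T P T$ and preserves its ordering, it maps minimal solutions to minimal solutions; as both realizations are positive real balanced with the same ordered characteristic values, this forces $T\T D T=D$ and, dually, $T D T\T=D$, where $D:=\diag(\sigma_1 I_{n_1},\dots,\sigma_h I_{n_h})$. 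Setting $S:=D^{1/2}T D^{-1/2}$, the first relation gives $S\T S=I$ and the second gives $S D^2 S\T=D^2$, so $S$ is orthogonal and commutes with $D$. Hence $S$ is block diagonal with orthogonal diagonal blocks $Q_i\in\R^{n_i\times n_i}$ conforming to the eigenspaces of $D$, and since $D^{\pm1/2}$ is a scalar multiple of the identity on each such block we obtain $T=S=\diag(Q_1,\dots,Q_h)$. The crux of this part is the justification that balancedness pins the otherwise free similarity down to this $D$-preserving form; once that is in place, the passage to block orthogonality is routine linear algebra.
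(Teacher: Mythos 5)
Your proof of the second assertion is correct and in fact more self-contained than the paper's, which disposes of it by citing Lemma~6 of~\cite{ReisWill11}: the chain ``unique similarity $T$ between two minimal realizations, balancedness forces $T\T \Pi T=\Pi$ and $T\Pi T\T=\Pi$ for the common Gramian $\Pi=\diag(\sigma_1I_{n_1},\ldots,\sigma_hI_{n_h})$, hence $S:=\Pi^{1/2}T\Pi^{-1/2}$ is orthogonal and commutes with $\Pi^2$ (so with $\Pi$, as $\Pi>0$), hence $S$ is block diagonal and $T=S$'' is valid, using that the $\sigma_i$ are distinct by Definition~\ref{prbip}. Likewise your congruence mechanism for the first assertion is sound: with the consistent SVD convention the identities are $P_{\min}V=W\Sigma$ and $Q_{\min}W=V\Sigma$, which give $\sW_{[W\T\cA V,W\T\cB,\cC V,\cD]}(\Sigma)=\diag(V,I_m)\T\,\sW_{[\cA,\cB,\cC,\cD]}(P_{\min})\,\diag(V,I_m)\leq0$ and its dual; note only that your pairings $W\T P_{\min}W=\Sigma$, $V\T Q_{\min}V=\Sigma$ are transposed (it is $V\T P_{\min}V=\Sigma$ and $W\T Q_{\min}W=\Sigma$), a slip that mirrors an internal inconsistency in the lemma's own displayed formulas for $W\T$ and $V$.

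The genuine gap sits exactly where you flag ``minimality bookkeeping.'' Your tool there --- that a state-space congruence is an order-preserving bijection of the KYP solution set --- holds only for \emph{invertible} transformations, whereas $W\T$, $V$ effect a non-invertible truncation whenever the given realization is non-minimal, which is the case the lemma is designed for ($P_{\min},Q_{\min}$ are merely positive semidefinite). What is needed, and what constitutes essentially the entire proof in the paper, is the characterization $\ker P_{\min}=\text{unobservable subspace}$ and $\im Q_{\min}=\text{controllable subspace}$. One inclusion uses only the inequality: writing $P_{\min}=\diag(P_1,0)$ with $P_1>0$, semidefiniteness of $\sW_{[\cA,\cB,\cC,\cD]}(P_{\min})$ forces $\cA_{12}=0$ and $\cC_2=0$, so $\ker P_{\min}$ is unobservable; the reverse inclusion uses \emph{minimality} of $P_{\min}$: in a Kalman observability decomposition, zeroing the blocks $P_{12},P_{22}$ of any solution yields again a solution, so minimality forces them to vanish. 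Only with this in hand does truncation at $\rank(LR\T)$ remove exactly the non-minimal part, does the reduced system realize $\tfG(s)$, and does your bijection argument become applicable (on the minimal part, where the transformation is invertible). Your appeal to ``relating $\rank(LR\T)$ to the McMillan degree'' is this missing fact restated, not a proof of it; and it cannot simply be outsourced to~\cite{ToPo87}, which concerns Lyapunov Gramians, for which the kernel/image characterizations are classical --- for minimal KYP solutions they are not, which is precisely why the paper proves them.
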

\begin{proof}
	By using an appropriate state space transformation, we can assume that $P_{\min}=:\diag(P_1,0)$ for some positive definite matrix $P_1$, and partition $[\cA,\cB,\cC,\cD]$ accordingly, i.\,e., $\cA=\begin{bsmallmatrix}
		\cA_{11} & \cA_{12} \\ \cA_{21} & \cA_{22}
		\end{bsmallmatrix}, \ B=\begin{bsmallmatrix}
		\cB_1 \\ \cB_2
		\end{bsmallmatrix}$, and $\cC=\begin{bsmallmatrix}\cC_1 & \cC_2\end{bsmallmatrix}$. Since $\sW_{[\cA,\cB,\cC,\cD]}\leq 0$, the block form yields $\cA_{12}=0$ and $\cC_2=0$, i.\,e., we obtain a~K\'alm\'an observability decomposition in which the kernel of $P$ corresponds to the unobservable states. On the other hand, if $[\cA,\cB,\cC,\cD]$ is in K\'alm\'an observability decomposition, and $P_{\min}=\begin{bsmallmatrix}
		P_{11} & P_{12} \\ P_{12}\T & P_{22}
		\end{bsmallmatrix}$ according to the block structure of the K\'alm\'an controllability decomposition, then a~simple calculation shows that $P=\begin{bsmallmatrix}
		P_{11} & 0 \\ 0 & 0
		\end{bsmallmatrix}$ with $\sW_{[\cA,\cB,\cC,\cD]}(P)\leq0$. The minimality of $P_{\min}$ therefore leads to $P_{12}=0$ and $P_{22}=0$. As a~consequence, the kernel of $P$ indeed corresponds to the space of unobservable states. Likewise, the image of $Q_{\min}$ corresponds to the space of controllable states, whence, by using the results from~\cite{ToPo87}, $[W\T \cA Z,W\T \cB,\cC Z,\cD]$ is a~minimal positive real balanced realization of $\tfG(s) $.
		
The second statement is Lemma~6 from~\cite{ReisWill11}.
\end{proof}

A consequence of this lemma is that the reduced transfer function does not depend on the specific minimal positive real balanced realization of the original transfer function.

Now we present details on the error bound of positive real balanced truncation.
A~{\em (right) coprime factorization} of $\tfG(s) \in\R(s)^{p\times m}$ is $\begin{bsmallmatrix}\tfM(s) \\ \tfN(s)\end{bsmallmatrix}$
consisting of $\tfN(s)\in\cR H^{\infty}(\Cpm)$, $\tfM(s)\in\cR H^{\infty}(\Cmm)$ such that $\tfG(s) =\tfN(s)\tfM(s)^{-1}$, and if there exist $\tfX(s)\in\cR H^{\infty}(\Cmm)$ and $\tfY(s)\in\cR H^{\infty}(\C^{m\times p})$
that satisfy the {\em B\'ezout identity}
\[\tfX(s)\tfM(s)+\tfY(s)\tfN(s)=I_m.\]
A~coprime factorization $\begin{bsmallmatrix}\tfM(s) \\ \tfN(s)\end{bsmallmatrix}$ is called {\em normalized} if additionally,
\[\tfM\T(-s)\tfM(s)+\tfN\T(-s)\tfN(s)=I_m.\]
Such factorizations can be computed using techniques as in~\cite{MeyeFran87,Vidy88}. Considering normalized coprime factorizations, a~distance measure for general transfer functions can be introduced.

\begin{definition}\label{gap metric}
	Let $\tfG_1(s),\tfG_2(s)\in\R(s)^{p\times m}$ be given with normalized coprime factorizations
	$\begin{bsmallmatrix}
	\tfM_1(s) \\ \tfN_1(s)
	\end{bsmallmatrix}$ and $\begin{bsmallmatrix}
	\tfM_2(s) \\ \tfN_2(s)
	\end{bsmallmatrix}$, respectively, where $\tfM^{-1}(s)$ is proper. Let  $\Pi_1,\Pi_2:\cR H^2(\C^{m+p})\To \cR H^2(\C^{m+p})$ be orthogonal projectors with
	\[
	\im\Pi_1=\begin{bmatrix}\tfM_1(s) \\ \tfN_1(s)\end{bmatrix}\cdot H^2(\C^{m}), \qquad
	\im\Pi_2=\begin{bmatrix}\tfM_2(s) \\ \tfN_2(s)\end{bmatrix}\cdot H^2(\C^{m}).
	\]
	Then the {\em gap between $\tfG_1(s)$ and $\tfG_2(s)$} is defined via
	\[\delta_g(\tfG_1(s),\tfG_2(s)):=\|\Pi_1-\Pi_2\|_{L(\cR H^2(\C^{m+p}))},\]
	where $\|\cdot\|_{L(\cR H^2(\C^{m+p}))}$ denotes the operator norm on $\cR H^2(\C^{m+p})$.
\end{definition}
It is shown in~\cite{Vidy84} that $\delta_g(\cdot,\cdot)$ fulfills the axioms of a~metric. The gap metric between two systems is simply the gap metric between their transfer functions. The properness of $\tfM^{-1}(s)$ ensures that the gap metric expresses a~measure for the distance between the input-output trajectories of two systems. Further note that the gap metric is also applicable to unstable systems.

Positive real balanced truncation has the following gap metric error bound.
\begin{theorem}\cite[Cor.~2.2]{GuivOpme13}\label{gap metric error}
	Let $[\cA,\cB,\cC,\cD]$ be a~realization of the positive real function $\tfG(s) \in\R(s)^{m\times m}$. Denote the positive real characteristic values by $(\sigma_i)_{i=1}^h$ and, for $r<h$, let $[\widetilde{\cA},\widetilde{\cB},\widetilde{\cC},\cD]$ be obtained by positive real balanced truncation of $[\cA,\cB,\cC,\cD]$ by removing the blocks corresponding to $\sigma_{r+1},\ldots,\sigma_h$. Then the transfer function $\widetilde{\tfG}(s)$ of $[\widetilde{\cA},\widetilde{\cB},\widetilde{\cC},\cD]$ fulfills
	\begin{equation*}
		\delta_g(\tfG(s) ,\widetilde{\tfG}(s))\leq 2 \sum_{i=r+1}^h \sigma_i.
	\end{equation*}	
\end{theorem}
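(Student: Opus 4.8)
The plan is to prove the bound one positive real characteristic value at a time and then assemble the pieces with the triangle inequality. Two facts recorded above make this viable: first, $\delta_g(\cdot,\cdot)$ is a genuine metric (as shown in \cite{Vidy84}), hence subadditive along a chain of intermediate systems; second, truncating a positive real balanced realization again yields a positive real balanced realization whose positive real characteristic values are precisely the retained ones. Concretely, I would set $\tfG^{(0)}(s):=\tfG(s) $ and let $\tfG^{(k)}(s)$ arise from $\tfG^{(k-1)}(s)$ by truncating only the block belonging to $\sigma_{r+k}$, so that $\tfG^{(h-r)}(s)=\widetilde{\tfG}(s)$. Each $\tfG^{(k-1)}(s)$ is itself positive real balanced with $\sigma_{r+k}$ among its characteristic values, which reduces everything to a single-block estimate
\[
\delta_g\bigl(\tfG^{(k-1)}(s),\tfG^{(k)}(s)\bigr)\leq 2\,\sigma_{r+k}.
\]
Summing over $k=1,\dots,h-r$ and using the triangle inequality then yields $\delta_g(\tfG(s) ,\widetilde{\tfG}(s))\leq 2\sum_{i=r+1}^{h}\sigma_i$.

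The remaining, and genuinely hard, task is this single-block bound. Here I would pass from the projector description of $\delta_g$ in Definition~\ref{gap metric} to the underlying graph symbols: using the passive balanced data $P_{\min}=Q_{\min}=\diag(\sigma_i I_{n_i})$, I would construct explicit normalized coprime factorizations $\begin{bsmallmatrix}\tfM^{(k-1)}\\ \tfN^{(k-1)}\end{bsmallmatrix}$ and $\begin{bsmallmatrix}\tfM^{(k)}\\ \tfN^{(k)}\end{bsmallmatrix}$ directly from the balanced realizations before and after removing the block of $\sigma_{r+k}$. The KYP structure is exactly what makes these factors computable: the minimal solution furnishes a spectral (inner--outer) factorization of the Popov function, and the balanced coordinates align the two graph symbols so that their difference is controlled by the single scalar $\sigma_{r+k}$. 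One then bounds $\|\Pi^{(k-1)}-\Pi^{(k)}\|$ via an all-pass (inner-dilation) argument, exactly as in the classical normalized coprime factor reduction of {\sc McFarlane} and {\sc Glover}; the factor $2$ and the all-pass structure of the error are the delicate points and are the analogue, at the level of graph symbols, of Glover's one-step $H^{\infty}$ estimate.

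I expect the construction and estimate of these coprime factors to be the main obstacle: everything hinges on translating the balanced KYP data into a normalized graph symbol and showing that removing one diagonal block perturbs that symbol by at most $2\sigma_{r+k}$ in operator norm. A viable alternative that sidesteps some of this bookkeeping is to apply the matrix Cayley transform $\tfG\mapsto(I-\tfG)(I+\tfG)^{-1}$, which sends positive real functions to bounded real ones and carries positive real balancing to bounded real balancing; since this acts on the graph by a constant unitary, the gap metric is preserved, and the claim would follow from the corresponding bounded real balanced truncation bound. Either route concentrates the difficulty in the same place, namely a sharp one-block estimate with the correct constant $2$, which is precisely where I would focus the detailed work.
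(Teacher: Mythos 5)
First, note that the paper contains no proof of this theorem: it is imported verbatim from \cite{GuivOpme13}, so the only meaningful comparison is with the proof in that reference. Your ``viable alternative'' at the end is, in substance, exactly the route taken there: the Cayley (M\"obius) transform $\tfG\mapsto \mathbf{K}=(I-\tfG)(I+\tfG)^{-1}$ maps positive real functions into the closed unit ball of $H^\infty$; a matrix $P$ solves the positive real KYP inequality for a realization of $\tfG(s)$ if and only if it solves the bounded real KYP inequality for the corresponding realization of $\mathbf{K}(s)$, so positive real balancing and the truncation step commute with the transform; the transform acts on graphs by the constant unitary $\frac{1}{\sqrt{2}}\left[\begin{smallmatrix} I & I\\ I & -I\end{smallmatrix}\right]$, whence $\delta_g(\tfG,\widetilde{\tfG})=\delta_g(\mathbf{K},\widetilde{\mathbf{K}})$; and the bounded real balanced truncation bound $\|\mathbf{K}-\widetilde{\mathbf{K}}\|_\infty\leq 2\sum_{i=r+1}^h\sigma_i$ together with the standard inequality $\delta_g\leq\|\cdot\|_\infty$ for stable transfer functions finishes the argument. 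Observe that on this route the one-block-at-a-time chaining is unnecessary, since the $H^\infty$ bound already delivers the full sum.

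Your primary route, by contrast, has two genuine soft spots. First, the chaining itself requires that each intermediate truncation is again positive real balanced with the retained values as its characteristic values; this is not automatic for a merely positive real $\tfG(s)$ and needs the results the paper cites from \cite{HarsJonc83} (where strict positive realness, respectively asymptotic stability and minimality, enter). Second, and more seriously, the single-block estimate is not obtained ``exactly as in McFarlane--Glover'': their normalized coprime factor bound is $2\sum\gamma_i$ where the $\gamma_i$ are the Hankel singular values of the normalized graph symbol $\left[\begin{smallmatrix}\tfM\\ \tfN\end{smallmatrix}\right]$, and these are \emph{not} the positive real characteristic values $\sigma_i$; identifying or dominating one family by the other is precisely the nontrivial content you would have to supply, and the constant $2$ with the $\sigma_i$ in place of the $\gamma_i$ does not follow by citation. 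You flag this yourself as the main obstacle, correctly; the clean resolution is the Cayley route, which is the proof of \cite{GuivOpme13}.
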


Note that \cite{GuivOpme13} considers positive real balanced truncation in which the states corresponding to the smallest characteristic values are removed. A~careful inspection of the proof of the error bound (and those of the therein used results) yields that the above error bound still holds when states corresponding to arbitrary positive real characteristic values are removed. In our method for second order systems we will make use of this fact.

\end{section}


\begin{section}{Positive real balanced truncation for second order systems}\label{sec: prbt so}
We introduce positive real balanced truncation for systems having a symmetric second order structure. Now we consider linear time-invariant first order systems $[\cA,\cB,\cC]$ 
with $\cA\in\R^{2n\times 2n}$ and $\cB\in\R^{2n\times m}$ structured as in \eqref{eq:linsys1}, that is
\begin{equation}
	\cA=\begin{bmatrix}
	0 & G\T  \\ -G & -D
	\end{bmatrix}\in\Gl_{2n}(\R),\quad \cB=\begin{bmatrix} 0 \\ B \end{bmatrix}=\cC\T
	\label{eq:ABstruc2}
\end{equation}
for some $D\in\R^{n\times n}$ with $D=D^\top\geq 0$ and $B\in\R^{n\times m}$. The transfer function is given by
\[\tfG(s) =\cC(sI_{2n}-\cA)^{-1}\cB=s B^\top(s^2I_n+sD+K)^{-1}B\in\R(s)^{m\times m},\]
where $K=GG\T$.
We first notice the following:
\begin{remark}[Second order systems, passivity, and zeros]\label{rem: sosap}
	\begin{enumerate}[a)]
		\item We assume throughout the remaining sections that $\rank \cB=m$. This is no restriction, since otherwise, there exists an orthogonal matrix $T\in\Rmm$ such that $\cB T=\begin{bmatrix}
		\cB_1 & 0
		\end{bmatrix}$, where $\rank \cB_1=m$. Hence $T\T \tfG(s) T=\begin{bsmallmatrix}
		\tfG_1(s) & 0 \\ 0 & 0
		\end{bsmallmatrix}$ for some $\tfG_1(s)\in\R(s)^{k\times k}$ with $k=\rank \cB_1$. In this case one can approximate $\tfG_1(s)$ instead and afterwards add the zero rows and columns to the reduced transfer function.
		\item It can be seen that $\sW_{[\cA,\cB,\cB\T,0]}(I_{2n})\leq 0$ and hence, $\tfG(s) $ is positive real by the positive real lemma (see Theorem~\ref{PRL}).
		Then \cite[Thm.~15]{Reis11} guarantees the existence of respective minimal solutions $P_{\min},Q_{\min}\geq 0$ of $\sW_{[\cA,\cB,\cB\T,0]}(P)\leq 0$ and $\sW_{[\cA\T,\cB,\cB\T,0]}(Q)\leq 0$, if $(\cA,\cB)$ and $(\cA\T,\cC\T)$ are stabilizable. By using the symmetry structure of the system~\eqref{eq:ABstruc2}, i.\,e., $\cA\T=\sS_n \cA\sS_n$ and $\cB\T=\cC\sS_n$ for $\sS_n=\diag(-I_n,I_n)$, the latter two properties are however equivalent due to $(\cA\T,\cC\T)=(\sS_n \cA\sS_n,\sS_n \cB)$. Since further, $\cA$ does not have any eigenvalues in $\mathbb{C}^+$, the absence of uncontrollable purely imaginary eigenvalues is sufficient for the existence of minimal solutions $P_{\min},Q_{\min}\geq 0$.
\item The assumption that $\rank B=m$ furthermore implies that the transfer function of $[\cA,\cB,\cC]$ with matrices in \eqref{eq:ABstruc2} is strictly positive real. Consequently, the transfer function $\tfG(s)\in\R(s)^{m\times m}$ is invertible. 
    Further note that positive realness of $\tfG(s) $ together with $\cA$ having no eigenvalues in $\C^+$ implies that the system $[\cA,\cB,\cC]$ has no zeros in $\C^+$.
	\end{enumerate}
\end{remark}

The symmetry structure further implies that $P\geq 0$ solves $\sW_{[\cA,\cB,\cC,0]}(P)\leq 0$, if and only if $Q:=\sS_n P \sS_n$ solves $\sW_{[\cA\T,\cC\T,\cB\T,0]}(Q)\leq 0$. In particular, $Q_{\min}=\sS_n P_{\min} \sS_n$ and thus, for $L\T L=P_{\min}$, we obtain that $\sS_n L\T L \sS_n=Q_{\min}$. Altogether, instead of the singular value decomposition~\eqref{SVD LRT} we can compute the eigendecomposition
\begin{equation}
	L\sS_nL\T =\underbrace{\begin{bmatrix}U^-&U^+\end{bmatrix}}_{=:U}\sS_n\underbrace{\begin{bmatrix}\Sigma^-&0\\0&\Sigma^+\end{bmatrix}}_{=:\Sigma}
	\begin{bmatrix}U^-&U^+\end{bmatrix}\T, \label{symmeig2}
\end{equation}
where $U\in\Rnn$ is orthogonal and
\begin{equation}
\begin{aligned}
\Sigma^-=&\,\diag\big(\sigma^-_{1}I_{n_1^-},\ldots,\sigma^-_{h^-}I_{n_{h^-}^-}\big),\qquad&0\leq&\sigma^{-}_{h^{-}}<\ldots<\sigma^{-}_{1}\leq 1,\\ \Sigma^{+}=&\diag\big(\sigma^+_{h^+}I_{n_{h^+}^+},\ldots,\sigma^+_{1}I_{n_1^+}\big),\qquad&0\leq&\sigma^{+}_{h^{+}}<\ldots<\sigma^{+}_{1}\leq 1.
\end{aligned}
\end{equation}
Next we choose some positive real characteristic values of positive and negative type which correspond to truncated states. 
To this end, let $r^+,r^-\in\N$ be such that for some $q^+,q^-\in\N$, $r^\pm=\sum_{j=1}^{q^\pm}n_j^\pm$. 
Additionally, these numbers have to be chosen such that the states corresponding to zero characteristic values are truncated, and the set of characteristic values corresponding to the truncated states does not intersect with the set of  characteristic values corresponding to the preserved states. This  means that
\begin{equation}\label{eq:rpm 1}
\begin{split}
      1&\leq q^{\pm} \leq h^{\pm}, \qquad \sigma_{q^{\pm}} > 0, \\
      \sigma^{-}_{q^{-}+j^{-}}&\neq \sigma^{+}_{i^{+}} \ \text{ and } \ \sigma^{+}_{q^{+}+j^{+}}\neq \sigma^{-}_{i^{-}} \ \text{ for all } \ i^{\pm}=1,\ldots,q^{\pm}, \ j^{\pm}=1,\ldots,h^{\pm}-q^{\pm}.     
\end{split}
\end{equation}
Note that the above condition is only of pathological nature and does not impose any serious restriction from a~numerical point of view, since generically, it holds that the characteristic values in $(0,1)$ are simple. 

The general purpose is that the reduced system can be transformed into a second order system. To this end we require that the reduced system has a~symmetry structure with respect to a~matrix $\sS_r$. 
Hence it is essential that we find $r^{\pm}$ which additionally fulfill $r^-=r^+$.
We partition~\eqref{symmeig2} as
\begin{equation}\label{Sigma pm12}
	L\sS_nL\T =\begin{bmatrix}U^-_{1}&U_{2}&U^+_{1}\end{bmatrix}\begin{bmatrix}-\Sigma^-_{1}&0&0\\0& S\Sigma_{2}&0\\0&0&\Sigma^+_{1}\end{bmatrix}
	\begin{bmatrix}(U^-_{1})\T\\(U_{2})\T\\(U^+_{1})\T\end{bmatrix},
\end{equation}
where $U_1^{\pm}\in\R^{2n\times r^{\pm}}, \ U_2\in\R^{2n\times (2n-r^--r^+)}, \ \Sigma_2\in\R^{(2n-r^--r^+)\times (2n-r^--r^+)}$, $\Sigma_1^{\pm}\in\R^{r^{\pm}\times r^{\pm}}$, and $S=\diag(-I_{n-r^-},I_{n-r^+})$.
We set
$
\Sigma_1:=\diag(\Sigma_1^-, \Sigma_1^+)$ and $U_1:=\begin{bmatrix}U^-_{1} & U^+_{1}\end{bmatrix}
$. Using the reduction matrices
\begin{equation}\label{proj WV}
	W\T:=\Sigma_1^{-\frac12}\sS_rU_1\T L \quad \text{and} \quad V:=\sS_nL\T U_1\Sigma_1^{-\frac12},
\end{equation}
we construct the reduced reduced first order model 
\begin{equation}
[\widetilde{\cA},\widetilde{\cB},\widetilde{\cC}]:=[W\T \cA V,W\T \cB,\cC V].
	\label{eq:redfo}
\end{equation}
Next, we state some important properties of the above reduced order model.
\begin{theorem}\label{thm: prop redsys}
	Let a~stabilizable system $[\cA,\cB,\cC]$ be given
	with $\cA\in\R^{2n\times 2n}$ and $\cB\in\R^{2n\times m}$ as in \eqref{eq:ABstruc2} with $G\in\Gl_n(\R)$, $D\in\R^{n\times n}$ with $D=D^\top\geq 0$, and $B\in\R^{n\times m}$. Consider the reduced system $[\widetilde{\cA},\widetilde{\cB},\widetilde{\cC}]$ as constructed as in \eqref{Sigma pm12}--\eqref{eq:redfo} for some $r^{+},r^-\in\N$ which fulfill~\eqref{symmeig2}--\eqref{eq:rpm 1}. Then the following statements are satisfied:
	\begin{enumerate}[a)]
		\item We have $\sW_{[\widetilde{\cA},\widetilde{\cB},\widetilde{\cC},0]}(\Sigma_1)\leq0$ and $\sW_{[\widetilde{\cA}\T,\widetilde{\cC}\T,\widetilde{\cB}\T,0]}(\Sigma_1)\leq0$. In particular, $[\widetilde{\cA},\widetilde{\cB},\widetilde{\cC}]$ is passive.
		\item We have $\diag(-I_{r^-},I_{r^+})\widetilde{\cA} \diag(-I_{r^-},I_{r^+})=\widetilde{\cA}\T$ and $\widetilde{\cB}=\diag(-I_{r^-},I_{r^+})\widetilde{\cB}=\widetilde{\cC}\T$.
		\item The gap metric between the transfer functions $\tfG(s) $ and $\widetilde{\tfG}(s)$ of 
		$[{\cA},{\cB},{\cC}]$
and 		$[\widetilde{\cA},\widetilde{\cB},\widetilde{\cC}]$
		can be estimated by
		\begin{equation*}
		\delta_g(\tfG(s) ,\widetilde{\tfG}(s))\leq 2\sum_{i=q^-+1}^{h^-}\sigma_i^- +2\sum_{i=q^++1}^{h^+}\sigma_i^+.
		\end{equation*}
		\item If $\sigma^-_{q^-+1}=0=\sigma^+_{q^++1}$, then $[\widetilde{\cA},\widetilde{\cB},\widetilde{\cC}]$ is a~minimal positive real balanced realization of $\tfG(s)$.
	\end{enumerate}
\end{theorem}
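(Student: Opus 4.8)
The plan is to establish a few structural identities first and then prove (b), (a), (c), (d) in that order, since the symmetry in (b) is what drives the KYP inequalities in (a). I would record three identities. Since $\sS_r=\diag(-I_{r^-},I_{r^+})$ commutes with the block-diagonal $\Sigma_1$, the definitions in~\eqref{proj WV} give $W=\sS_n V\sS_r$, equivalently $V^\top\sS_n=\sS_r W^\top$. By~\eqref{Sigma pm12} the columns of $U_1=\begin{bmatrix}U_1^-&U_1^+\end{bmatrix}$ are eigenvectors of the symmetric matrix $L\sS_n L^\top$ for the eigenvalues $\diag(-\Sigma_1^-,\Sigma_1^+)=\sS_r\Sigma_1$, so that $U_1^\top L\sS_n L^\top=\sS_r\Sigma_1 U_1^\top$. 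Finally, because $\cD=0$ the lower right block of $\sW_{[\cA,\cB,\cC,0]}(P_{\min})$ vanishes, and a negative semidefinite symmetric matrix with a zero diagonal block must have the corresponding off-diagonal block equal to zero; hence $P_{\min}\cB=\cC^\top=\cB$. Together with $\sS_n\cB=\cB$ this yields $(L\sS_n L^\top)(L\cB)=L\sS_n\cB=L\cB$, i.e. the columns of $L\cB$ are eigenvectors of $L\sS_n L^\top$ for the eigenvalue $1$.

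For (b), using $\cA^\top=\sS_n\cA\sS_n$ and $W=\sS_n V\sS_r$ I compute $\tilde\cA^\top=V^\top\sS_n\cA\sS_n W=\sS_r(W^\top\cA V)\sS_r=\sS_r\tilde\cA\sS_r$. For the input matrix, the eigenvalue-$1$ property places $L\cB$ in the span of those columns of $U_1^+$ whose characteristic value equals $1$, orthogonal to $U_1^-$; writing $\tilde\cB=\Sigma_1^{-1/2}\sS_r U_1^\top L\cB$, this means $U_1^\top L\cB$ is supported only on positive-type components with characteristic value $1$, so $\sS_r(U_1^\top L\cB)=U_1^\top L\cB$ and hence $\tilde\cB=\sS_r\tilde\cB$. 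Combining with $\tilde\cC^\top=V^\top\cB=V^\top\sS_n\cB=\sS_r W^\top\cB=\sS_r\tilde\cB$ gives the full chain $\tilde\cB=\sS_r\tilde\cB=\tilde\cC^\top$.

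For (a), the crucial identity is $\Sigma_1 W^\top=V^\top P_{\min}$, obtained by inserting $U_1^\top L\sS_n L^\top=\sS_r\Sigma_1 U_1^\top$ into $V^\top P_{\min}=\Sigma_1^{-1/2}U_1^\top(L\sS_n L^\top)L$. It turns the $(1,1)$ block of the reduced KYP matrix into a congruence of the original one,
\[\tilde\cA^\top\Sigma_1+\Sigma_1\tilde\cA=V^\top\bigl(\cA^\top P_{\min}+P_{\min}\cA\bigr)V\le 0.\]
By (b) the $(1,2)$ block equals $\Sigma_1\tilde\cB-\tilde\cC^\top=(\Sigma_1-I)\tilde\cB$, which vanishes since $\tilde\cB$ is supported where $\Sigma_1=I$; the $(2,2)$ block is $0$. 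This proves $\sW_{[\tilde\cA,\tilde\cB,\tilde\cC,0]}(\Sigma_1)\le 0$. The dual inequality is then immediate: by (b) the reduced system has the same symmetry structure with respect to $\sS_r$, so exactly as in the discussion preceding~\eqref{symmeig2} the congruence by $\diag(\sS_r,I_m)$ maps the primal KYP matrix to the dual one, and $\sS_r\Sigma_1\sS_r=\Sigma_1$. Passivity then follows from the dissipation inequality~\eqref{eq:dissineq} for the positive definite solution $\Sigma_1$.

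Claim (c) is a direct application of Theorem~\ref{gap metric error} together with the remark that its bound survives the removal of arbitrary characteristic values, the truncated ones being exactly $\sigma^-_{q^-+1},\dots,\sigma^-_{h^-}$ and $\sigma^+_{q^++1},\dots,\sigma^+_{h^+}$. For (d), the hypothesis $\sigma^-_{q^-+1}=0=\sigma^+_{q^++1}$ means only zero characteristic values are removed, so $U_1$ exhausts the nonzero spectrum of $L\sS_n L^\top$; the factors $L$ and $R=L\sS_n$ have full row rank and the matrices~\eqref{proj WV} coincide with those of Lemma~\ref{lem:prb minimal} (the symmetric SVD of $L\sS_n L^\top$ being its eigendecomposition up to the sign matrix $\sS_r$), so $[\tilde\cA,\tilde\cB,\tilde\cC]$ is a minimal positive real balanced realization, while (c) gives $\delta_g(\tfG(s),\tilde\tfG(s))=0$ and hence $\tilde\tfG(s)=\tfG(s)$. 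The main obstacle is the last two identities of the first paragraph: establishing $P_{\min}\cB=\cB$ and that $L\cB$ consists of eigenvectors for the eigenvalue $1$ (which forces the negative-type block of $\tilde\cB$ to vanish and the $(1,2)$ block of the reduced KYP to be zero), and the Gramian-consistency identity $\Sigma_1 W^\top=V^\top P_{\min}$ that carries the Lyapunov inequality down to the reduced system; everything else is bookkeeping.
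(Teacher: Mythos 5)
Your proof is correct and follows essentially the same route as the paper: for parts c) and d) you invoke exactly what the paper's proof does (Theorem~\ref{gap metric error} together with the remark that the bound survives removal of arbitrary characteristic values, and Lemma~\ref{lem:prb minimal}), while for a) and b) you reconstruct in full the congruence and symmetry arguments that the paper delegates to the proof of \cite[Thm.~8]{ReisWill11}. Your key identities $W=\sS_n V\sS_r$, $P_{\min}\cB=\cC^\top=\cB$ (forced by the vanishing lower-right block of the KYP matrix), and $\Sigma_1 W^\top=V^\top P_{\min}$ are precisely the ingredients of that cited argument, so nothing is missing.
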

\begin{proof}
	The first two statements can be inferred from the arguments in the proof of \cite[Thm.~8]{ReisWill11}, whereas
	the third part follows from Theorem~\ref{gap metric error} and the last statement from Lemma~\ref{lem:prb minimal}.
\end{proof}

The exact block structure of the reduced system, as introduced in~\eqref{balanced so0}, will be part of Section~\ref{sec:balancing}. First we need some results from the study of indefinite linear algebra.

\end{section}


\begin{section}{Preliminaries from indefinite linear algebra}\label{sec:prelim indef lina}

Next, we introduce some notions and results from the study of indefinite linear algebra.
\begin{definition}\label{def:consim}
	Two pairs $(S_j,A_j)\in\Rnn\times\Rnn$, $j=1,2$, consisting of a~symmetric matrix $S_j\in\Gl_n(\R)$ and an {\em $S_j$-self-adjoint} matrix $A_j$, i.\,e., $A_j\T S_j=S_jA_j$, are called \emph{congruent-similar}, if there exists a $T\in\Gl_n(\R)$ such that $T^{-1}A_1T=A_2$ and $T\T S_1 T=S_2$. 
\end{definition}
In \cite{GohbLanc83}, a canonical form under congruence-similarity is given. For the sake if simplicity we will focus on diagonalizable matrices.

\begin{theorem}\label{can unsim}\cite[Sec.~I.5, Thm.~5.3]{GohbLanc83}
	Let $(S,A)\in\Rnn\times\Rnn$, where $S\in\Gl_n(\R)$ is symmetric, and $A$ is $S$-self-adjoint and diagonalizable over $\C$. Then there exists some $T\in\Gl_n(\R)$, such that for some $k,c\in\N$ with $2c+k=n$ and $\varepsilon_1,\ldots,\varepsilon_k\in\{\pm 1\}$, $\lambda_1,\ldots,\lambda_k\in\R$,  $\sigma_1,\ldots,\sigma_c\in\R$, $\tau_1,\ldots,\tau_c>0$, and
	\begin{equation*}
		\begin{split}
		&  \sJ_1:=\begin{bmatrix}
		0 & 1 \\ 1 & 0
		\end{bmatrix}, \ \sJ_c:=\diag(\underbrace{\sJ_1,\ldots,\sJ_1}_{c  \text{ times}}), \
		\sP_{\sigma_i,\tau_i}:=\begin{bmatrix}
		\sigma_i & \tau_i \\ -\tau_i & \sigma_i
		\end{bmatrix}, \\
		\end{split}
	\end{equation*}
	we obtain
	\begin{equation}\label{can unsim eq}
		\begin{split}
	 	T\T ST=&\diag(\varepsilon_1,\ldots,\varepsilon_k,\sJ_c), \\ T^{-1}AT=&\diag(\lambda_1,\ldots,\lambda_k,\sP_{\sigma_1,\tau_1},\ldots,\sP_{\sigma_{c},\tau_{c}}).
		\end{split}
	\end{equation}
\end{theorem}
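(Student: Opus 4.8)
The plan is to interpret the $S$-self-adjointness of $A$ as genuine self-adjointness with respect to the indefinite symmetric bilinear form $[x,y]:=y\T S x$, extended complex-bilinearly to $\mathbb{C}^n$: the identity $A\T S=SA$ gives $[Ax,y]=[x,Ay]$ for all $x,y$. First I would diagonalize. Since $A$ is diagonalizable over $\C$, write $\mathbb{C}^n=\bigoplus_\mu E_\mu$ with $E_\mu=\ker(A-\mu I)$. For $x\in E_\mu$, $y\in E_\nu$ self-adjointness forces $\mu[x,y]=[Ax,y]=[x,Ay]=\nu[x,y]$, hence $[x,y]=0$ whenever $\mu\neq\nu$; thus the decomposition is $[\cdot,\cdot]$-orthogonal. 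Because $S$ is nonsingular, a vector in $E_\mu$ that is $[\cdot,\cdot]$-orthogonal to all of $E_\mu$ would be orthogonal to the whole space and hence zero, so $[\cdot,\cdot]$ is nondegenerate on each $E_\mu$. It then suffices to produce the canonical pair on each $E_\mu$ for real $\mu$, and on each conjugate pair $E_\mu\oplus E_{\bar\mu}$ for non-real $\mu$ with $\tau:=\operatorname{Im}\mu>0$, and to assemble the blocks along the orthogonal direct sum.

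For a real eigenvalue $\lambda$, the space $E_\lambda$ is the complexification of the real eigenspace $\ker(A-\lambda I)\subseteq\mathbb{R}^n$, on which $A$ acts as $\lambda I$ and $[\cdot,\cdot]$ restricts to a nondegenerate real symmetric form. Sylvester's law of inertia then supplies a real basis in which this form is $\diag(\varepsilon_1,\dots)$ with $\varepsilon_j\in\{\pm1\}$ while $A$ stays $\lambda I$; each basis vector contributes a scalar block $\lambda_j=\lambda$ with sign $\varepsilon_j$. This is the routine part and accounts for the $\varepsilon_j,\lambda_j$ entries in~\eqref{can unsim eq}.

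The non-real eigenvalues are where the work lies. Fix $\mu=\sigma+\ri\tau$ with $\tau>0$ and pick a basis $v_1,\dots,v_d$ of $E_\mu$, so that $\bar v_1,\dots,\bar v_d$ is a basis of $E_{\bar\mu}=\overline{E_\mu}$. Orthogonality of $E_\mu$ and $E_{\bar\mu}$ gives $[v_j,\bar v_k]=0$, while the complex symmetric Gram matrix $G=\big([v_j,v_k]\big)_{j,k}$ is nonsingular by nondegeneracy on $E_\mu$. Since every nonsingular complex symmetric matrix is congruent over $\C$ to the identity, I may choose the $v_j$ with $[v_j,v_k]=\delta_{jk}$, and then rescale $v_j\mapsto\alpha_jv_j$ (which preserves $Av_j=\mu v_j$) to arrange $[v_j,v_j]=2\ri$. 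Passing to $x_j=\tfrac12(v_j+\bar v_j)$ and $y_j=\tfrac1{2\ri}(v_j-\bar v_j)$, a direct computation from $Av_j=\mu v_j$ shows $Ax_j=\sigma x_j-\tau y_j$ and $Ay_j=\tau x_j+\sigma y_j$, i.e.\ $A$ acts as $\sP_{\sigma,\tau}$ on $\operatorname{span}\{x_j,y_j\}$; and the choices $[v_j,v_j]=2\ri$, $[v_j,\bar v_j]=0$ yield $[x_j,x_j]=[y_j,y_j]=0$ and $[x_j,y_j]=1$, i.e.\ the form is $\sJ_1$ on that block. The vanishing of $[v_j,v_l]$ and $[v_j,\bar v_l]$ for $j\neq l$ makes distinct blocks orthogonal, so $E_\mu\oplus E_{\bar\mu}$ contributes $d$ copies of $(\sJ_1,\sP_{\sigma,\tau})$.

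The main obstacle is precisely this complex-eigenvalue step: one must verify that the only freedom compatible with the normal form $\sP_{\sigma,\tau}$ of $A$ is the scalar multiplication $v_j\mapsto\alpha_jv_j$ (conformal maps of the $\mathbb{R}^2$ plane), which acts on $[v_j,v_j]$ by $\alpha_j^2\in\C\setminus\{0\}$. This is exactly enough to normalize $[v_j,v_j]$ to $2\ri$ but leaves no residual sign, which is why, in contrast to the real eigenvalues, the non-real ones carry no sign characteristic and the $2\times2$ form is forced to be $\sJ_1$ with signature $(1,1)$. Stacking the real bases from the $\lambda$-blocks and the pairs $x_j,y_j$ from the $\mu$-blocks into the columns of a single $T\in\Gl_n(\R)$ gives, by orthogonality of the summands, the block-diagonal forms in~\eqref{can unsim eq}, with $k$ scalar blocks and $c$ planar blocks satisfying $2c+k=n$. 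The whole argument is the diagonalizable specialization of the canonical-form theory of~\cite{GohbLanc83}; for non-diagonalizable $A$ one would instead reduce each root subspace to real Jordan blocks carrying sign characteristics, but diagonalizability lets us bypass that entirely.
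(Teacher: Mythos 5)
Your proof is correct, but it cannot be compared against an argument in the paper, because the paper gives none: Theorem~\ref{can unsim} is quoted from \cite[Sec.~I.5, Thm.~5.3]{GohbLanc83}, where it arises as the diagonalizable special case of the general canonical form for $S$-self-adjoint matrices, proved there via real Jordan structure and sign characteristics on root subspaces. Your route is a genuinely more elementary, self-contained alternative that exploits diagonalizability from the start: extending $[x,y]=y\T Sx$ complex-\emph{bilinearly} (rather than sesquilinearly) correctly yields $[\cdot,\cdot]$-orthogonality of distinct eigenspaces and, via nonsingularity of $S$, nondegeneracy of the restriction to each $E_\mu$; Sylvester's law handles the real eigenvalues and produces the signs $\varepsilon_j$; and for a non-real pair $\mu=\sigma+\ri\tau$, the fact that nondegenerate complex symmetric forms have a single congruence class over $\C$ (no signature) is exactly what permits the normalization $[v_j,v_k]=2\ri\,\delta_{jk}$ and explains why the $2\times2$ blocks carry no sign. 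I verified the computations: with that normalization and $[v_j,\bar v_l]=0$ one indeed gets $[x_j,x_j]=[y_j,y_j]=0$, $[x_j,y_j]=1$ (using $[\bar v,\bar w]=\overline{[v,w]}$, valid since $S$ is real), and $Av_j=\mu v_j$ gives $A$ acting as $\sP_{\sigma,\tau}$ on $\operatorname{span}\{x_j,y_j\}$ with $\tau>0$, so assembling the mutually orthogonal blocks into a real $T$ yields~\eqref{can unsim eq}. What your approach buys is transparency and brevity in exactly the regime the paper needs; what the cited proof buys is generality (non-semisimple eigenvalues, where your basis-change freedom inside $E_\mu$ disappears and sign characteristics attach to Jordan blocks). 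One small blemish: your ``main obstacle'' paragraph asserting that the only freedom compatible with the form $\sP_{\sigma,\tau}$ is scalar multiplication $v_j\mapsto\alpha_j v_j$ is both inaccurate and unnecessary --- since every vector of $E_\mu$ is an eigenvector, the full group $\Gl_d(\C)$ of basis changes is available (this is precisely what licenses your congruence step, and the residual freedom after normalization is a complex orthogonal group, not just scalings); the absence of a sign is already fully explained by the triviality of complex symmetric congruence, so existence needs no freedom analysis at all.
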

It has been further shown in \cite[Sec.~I.5, Thm.~5.3]{GohbLanc83} that the above is a canonical form for the pair $(S,A)$ under congruence-similarity, if the tuples $(\varepsilon_1,\lambda_1),\ldots,(\varepsilon_k,\lambda_k)$ and  $(\sigma_1,\tau_1),\ldots,(\sigma_c,\tau_c)$ are ordered increasingly with respect to the lexicographical order.
It can be seen that the eigenvalues of $A$ in Theorem~\ref{can unsim} are given by $\lambda_1,\ldots,\lambda_k$ and $\sigma_1\pm \iunit \tau_1,\ldots,\sigma_1\pm \iunit\tau_1$.
Based on this normal form we derive a~special form for $S$-self-adjoint and diagonalizable matrices whose eigenvalues are in the closed complex left half plane.

\begin{corollary}\label{right symmetry form}
	Let $(S,A)\in\R^{n\times n}\times\R^{n\times n}$ be as in Theorem~\ref{can unsim}. If all eigenvalues of $A$ have negative real part, then there exist $T\in\Gl_n(\R)$ and $c,k,k_1,k_2\in\N$ with $k_1+k_2=k$ and $2c+k=n$, such that
	\begin{equation}\label{eq:right symmetry form}
		T^{-1}AT=\begin{bmatrix}
		0 & 0 & \cV \\
		0 & \Lambda  & 0 \\
		-\cV & 0 & \cE \\
		\end{bmatrix}, \quad T\T S T=\diag(-I_{c+k_1},I_{k_2+c}),
	\end{equation}
	where $\cE,\cV\in\R^{c\times c}$ and $\Lambda\in\R^{k\times k}$ are negative definite and diagonal.
\end{corollary}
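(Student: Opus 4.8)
The plan is to reduce to the canonical form of Theorem~\ref{can unsim} and then treat the real and the complex blocks separately, the complex ones being the only delicate part. First I would apply Theorem~\ref{can unsim} to obtain $T_0\in\Gl_n(\R)$ with $T_0^\top S T_0=\diag(\varepsilon_1,\ldots,\varepsilon_k,\sJ_c)$ and $T_0^{-1}AT_0=\diag(\lambda_1,\ldots,\lambda_k,\sP_{\sigma_1,\tau_1},\ldots,\sP_{\sigma_c,\tau_c})$. Since the eigenvalues of $A$ are exactly $\lambda_1,\ldots,\lambda_k$ together with the conjugate pairs $\sigma_j\pm\ri\tau_j$, the hypothesis that every eigenvalue has negative real part forces $\lambda_i<0$ for all $i$ and $\sigma_j<0$ for all $j$.

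For the real part, a permutation similarity $P$ (a congruence-similarity, since permutation matrices are orthogonal) reorders the real indices so that the $k_1$ entries with $\varepsilon_i=-1$ precede the $k_2=k-k_1$ entries with $\varepsilon_i=+1$. This produces the diagonal block $\Lambda$ formed from the reordered $\lambda_i$, which is negative definite and diagonal because all $\lambda_i<0$, and the associated signature block $\diag(-I_{k_1},I_{k_2})$.

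The crux is each $2\times2$ complex block $(\sJ_1,\sP_{\sigma_j,\tau_j})$. I claim it is congruent-similar to $\big(\diag(-1,1),\,\begin{bsmallmatrix}0&v_j\\-v_j&e_j\end{bsmallmatrix}\big)$ with $e_j:=2\sigma_j<0$ and $v_j:=-\sqrt{\sigma_j^2+\tau_j^2}<0$. Indeed, $\diag(-1,1)$ is symmetric and invertible, a direct check shows $\begin{bsmallmatrix}0&v_j\\-v_j&e_j\end{bsmallmatrix}$ is $\diag(-1,1)$-self-adjoint, and its characteristic polynomial $\mu^2-e_j\mu+v_j^2$ has the two distinct roots $\sigma_j\pm\ri\tau_j$, so the matrix is diagonalizable with a genuine complex-conjugate eigenvalue pair. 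Applying Theorem~\ref{can unsim} to this $2\times2$ pair, its canonical form consists of a single complex block and, crucially, complex eigenvalues carry no sign characteristic, so that block is forced to be $(\sJ_1,\sP_{\sigma_j,\tau_j})$; since the canonical form is a complete invariant under congruence-similarity, the two pairs are congruent-similar. The sign ambiguity $v_j=\pm\sqrt{\sigma_j^2+\tau_j^2}$ leaves the eigenvalues unchanged, so I may choose $v_j<0$ to make the eventual $\cV$ negative definite. Collecting the blockwise transformations into $R:=\diag(R_1,\ldots,R_c,I_k)$ converts the complex part into the stated $2\times2$ forms while leaving the reordered real part fixed.

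Finally I would apply one more permutation $\Pi$ that regroups the $2c$ complex coordinates: the $c$ coordinates carrying signature $-1$ (the first coordinate of each complex block) go to the leading $c$ positions, the $c$ coordinates carrying signature $+1$ to the trailing $c$ positions, and the real part stays in the middle. Reading off the entries $0,\,v_j,\,-v_j,\,e_j$ of each complex block then yields $\begin{bsmallmatrix}0&0&\cV\\0&\Lambda&0\\-\cV&0&\cE\end{bsmallmatrix}$ with $\cV=\diag(v_1,\ldots,v_c)$ and $\cE=\diag(e_1,\ldots,e_c)$ both negative definite and diagonal, while the signature becomes $\diag(-I_{c+k_1},I_{k_2+c})$. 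Setting $T:=T_0\,P\,R\,\Pi$ gives the assertion. The only genuinely delicate step is the complex-block reduction: it rests on the completeness of the canonical form in Theorem~\ref{can unsim} and, specifically, on complex eigenvalues carrying no sign characteristic, which is exactly what permits the indefinite inner product $\diag(-1,1)$ to replace $\sJ_1$; everything else is bookkeeping with permutations.
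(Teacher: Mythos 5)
Your proposal is correct, and it follows the same skeleton as the paper's proof --- pass to the canonical form of Theorem~\ref{can unsim}, reduce each $2\times2$ complex block $(\sJ_1,\sP_{\sigma_j,\tau_j})$ to a pair $\bigl(\diag(-1,1),\begin{bsmallmatrix}0&\nu\\-\nu&\eta\end{bsmallmatrix}\bigr)$, then sort everything with permutations --- but your handling of the one delicate step, the $2\times2$ complex block, is genuinely different. The paper exhibits an explicit transformation $\Theta_i$ in \eqref{Pi} and verifies the two identities by direct computation; you instead write down the target pair with $e_j=2\sigma_j$ and $v_j=-\sqrt{\sigma_j^2+\tau_j^2}$, check that it is $\diag(-1,1)$-self-adjoint with spectrum $\sigma_j\pm\ri\tau_j$, and invoke the completeness of the canonical form: since nonreal conjugate pairs carry no sign characteristic, both pairs have the identical canonical form $(\sJ_1,\sP_{\sigma_j,\tau_j})$ and are therefore congruent-similar. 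That argument is sound, and your sign choice $v_j<0$ is legitimate (it is even directly realizable by congruence with $\diag(1,-1)$, which fixes $\diag(-1,1)$ and flips the off-diagonal sign). The trade-off between the two routes: the paper's is constructive --- the matrix $\Theta_i$ from \eqref{Pi} is reused verbatim in Step~2c) of the numerical procedure in Section~\ref{sec:algorithm} --- whereas yours is shorter and, via trace and determinant invariance under similarity, pins the reduced block down exactly: necessarily $\eta=2\sigma_j<0$ and $\nu^2=\sigma_j^2+\tau_j^2$. Incidentally, your route sidesteps two slips in the printed proof: the hypothesis that the spectrum lies in the open left half-plane forces $\sigma_i<0$, not $\sigma_i>0$ as asserted there, and the displayed expression for $\eta_i$ is inconsistent with trace invariance (it equals $2\sigma_i$ only when $\sigma_i=0$), so it must contain a typo; your closed form $\eta=2\sigma_j$ is the correct value.
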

\begin{proof}
	Without loss of generality we can assume that $(S,A)$ is in the canonical form of Theorem~\ref{can unsim}. The assumption on the spectrum of $A$ implies that $\sigma_i> 0$ for $i=1,\ldots,c$ and $\ell=0$. 
	Since further, $\tau_i>0$ for all $i=1,\ldots,c$, the matrix
	\begin{equation}\label{Pi}
		\Theta_i:=\frac{1}{\sqrt{2\tau_i}}\begin{bsmallmatrix}
		\sqrt{-\sigma_i+\sqrt{\sigma_i^2+\tau_i^2}} & -\sqrt{-\sigma_i+\sqrt{\sigma_i^2+\tau_i^2}} \\ \sqrt{\tau_i}\left(-2\sqrt{-\sigma_i+\sqrt{\sigma_i^2+\tau_i^2}}\right)^{-1} & \sqrt{\tau_i}\left(-2\sqrt{-\sigma_i+\sqrt{\sigma_i^2+\tau_i^2}}\right)^{-1}
		\end{bsmallmatrix}
	\end{equation}
	is real and straightforward computations show that
	\begin{equation*}
		\begin{split}
			\Theta_i^{\top}\sJ_1 \Theta_i=&\sS_1 \quad \text{and} \quad \Theta_i^{-1}\cP_{\sigma_i,\tau_i}\Theta_i=\sS_1\Theta_i\T\sS_1\cP_{\sigma_i,\tau_i}\Theta_i=\begin{bmatrix}
			0 & \nu_i \\
			-\nu_i & \eta_i
			\end{bmatrix},
		\end{split}
	\end{equation*}
	where $\eta_i=\tfrac{-3\sigma_i^2+4\sigma_i\sqrt{\sigma_i^2+\tau_i^2}}{-2\sigma_i+2\sqrt{\sigma_i^2+\tau_i^2}}< 0$. 
	Setting $T:=\diag(I_{2k},\Theta_1,\ldots,\Theta_{c+ \ell})$ and suitably interchanging the rows and columns of the tuple $(T\T S T,T^{-1}AT)$ leads to the form~\eqref{eq:right symmetry form}.
\end{proof}

\begin{definition}
	Let $(S,A)\in\R^{n\times n}\times\R^{n\times n}$ be as in Theorem~\ref{can unsim}. Regarding the canonical form in Theorem~\ref{can unsim}, we call the tuple $\left((\varepsilon_1,\lambda_1),\ldots,(\varepsilon_k,\lambda_k)\right)$ the \emph{sign characteristics} of $(S,A)$. Further we call an eigenvalue $\lambda$ of $A$ an eigenvalue of $(S,A)$, and we say that a~real eigenvalue $\lambda$ of $(S,A)$ is of \emph{positive} $($\emph{negative}$)$ type if $(1,\lambda)$ $((-1,\lambda))$ is contained in the sign characteristics of $(S,A)$.
\end{definition}

We will often indicate that an eigenvalue is of positive (negative) type by equipping it with a superscript $"^+"("^-")$. Note that $\lambda$ can be of both negative and positive type.
The next result follows rather directly from the definition of the sign characteristics.

\begin{proposition}\label{prop: compute signs}
	Let $(S,A)\in\R^{n\times n}\times\R^{n\times n}$ be as in Theorem~\ref{can unsim}. An~eigenvalue $\lambda\in\R$ of $(S,A)$ is of positive (negative) type, if and only if there exists an~eigenvector $v\in\Rn\backslash\{0\}$ of $A$ corresponding to the eigenvalue $\lambda$ of $A$ such that $v\T Sv>0$ $(v\T S v<0)$.
\end{proposition}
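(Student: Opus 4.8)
The plan is to exploit that the assertion is invariant under congruence-similarity and thereby reduce everything to the canonical form of Theorem~\ref{can unsim}. First I would observe that if $T\in\Gl_n(\R)$ transforms $(S,A)$ into $(\hat S,\hat A)$ with $\hat A=T^{-1}AT$ and $\hat S=T\T S T$, then $v$ is an eigenvector of $A$ for $\lambda$ if and only if $\hat v:=T^{-1}v$ is an eigenvector of $\hat A$ for $\lambda$, and moreover $v\T S v=\hat v\T \hat S\hat v$. Hence both the sign characteristics and the sign of the quadratic form $v\T Sv$ on eigenvectors are preserved, and I may assume without loss of generality that $(S,A)$ already has the form $S=\diag(\varepsilon_1,\ldots,\varepsilon_k,\sJ_c)$ and $A=\diag(\lambda_1,\ldots,\lambda_k,\sP_{\sigma_1,\tau_1},\ldots,\sP_{\sigma_c,\tau_c})$.

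Next I would identify the real eigenspaces. Since $\tau_i>0$, each block $\sP_{\sigma_i,\tau_i}$ has the non-real eigenvalues $\sigma_i\pm\iunit\tau_i$ and therefore contributes no real eigenvector. Consequently, for a real eigenvalue $\lambda$ of $A$, the corresponding eigenspace is exactly $E_\lambda=\operatorname{span}\{e_i : 1\leq i\leq k,\ \lambda_i=\lambda\}$, spanned by the standard basis vectors of the scalar blocks with $\lambda_i=\lambda$.

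The computation then decouples: for $v=\sum_{i:\,\lambda_i=\lambda}c_i e_i\in E_\lambda$, the entries indexed by $1,\ldots,k$ do not interact with the $\sJ_c$ block, so $v\T S v=\sum_{i:\,\lambda_i=\lambda}\varepsilon_i c_i^2$. From this, the positive-type case follows: there exists a $v\in E_\lambda\setminus\{0\}$ with $v\T S v>0$ if and only if $\varepsilon_i=+1$ for at least one index $i$ with $\lambda_i=\lambda$ (taking $v=e_i$ for such $i$ yields $v\T S v=\varepsilon_i=1>0$; conversely, if all these $\varepsilon_i$ equal $-1$, then $v\T Sv=-\sum c_i^2<0$ for every nonzero $v\in E_\lambda$). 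By the definition of the sign characteristics, $\varepsilon_i=+1$ for some such $i$ is precisely the statement that $\lambda$ is of positive type. The negative-type case is entirely analogous with the roles of $+1$ and $-1$ interchanged.

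I expect no serious obstacle here; the only points requiring care are the invariance of $v\T Sv$ under the transformation $T$ (which relies on $\hat S=T\T S T$ rather than $T^{-1}ST$, i.e.\ on congruence rather than similarity of $S$) and the verification that the $2\times 2$ blocks carry no real eigenvectors, so that $E_\lambda$ lies entirely in the coordinate span of the scalar blocks and the quadratic form diagonalizes as claimed.
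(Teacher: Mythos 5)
Your proposal is correct and follows essentially the same route as the paper, which simply asserts the claim is obvious in the canonical form of Theorem~\ref{can unsim} and then invokes invariance under the transformation to that form. You have merely supplied the details the paper leaves implicit: the invariance of $v\T Sv$ under congruence-similarity and the verification that the $2\times 2$ blocks $\sP_{\sigma_i,\tau_i}$ contribute no real eigenvectors, so the quadratic form diagonalizes on each real eigenspace.
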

\begin{proof}
	This is obviously true, if $(S,A)$ is in the canonical form~\eqref{can unsim eq}. The general statement then follows by a~transformation of $(S,A)$ into this canonical form.
\end{proof}


We are now able to define the pole and zero sign~characteristics of a system $[\cA,\cB,\cC]$. Recall that the zeros of a~system $[\cA,\cB,\cC]$ are the eigenvalues of the pencil $\begin{bsmallmatrix}
-s I_n+\cA & \cB \\ \cC & 0
\end{bsmallmatrix}$. If the transfer function is square and invertible, we call a~zero $\mu$ of $[\cA,\cB,\cC]$  \emph{semi-simple}, if it is a~semi-simple eigenvalue of the latter pencil. That is, the order of the zero $\mu$ of $\det\begin{bsmallmatrix}
-s I_n+\cA & \cB \\ \cC & 0
\end{bsmallmatrix}$ equals to the dimension of the kernel of the complex matrix $\begin{bsmallmatrix}
-\mu I_n+\cA & \cB \\ \cC & 0
\end{bsmallmatrix}$.

\begin{definition}\label{def:polezerosignchar}
	Let a~system $[\cA,\cB,\cC]$ with invertible transfer function $\tfG(s)\in\R^{m\times m}$ be given such that, for a~signature matrix $S\in\Gl_n(\R)$, it holds that $S\cA S=\cA\T$ and $\cB=S\cB=C\T$. We say that such a~system is \emph{internally symmetric} (w.r.t. $S$). We call the sign characteristics of $(S,\cA)$ the \emph{pole sign characteristics} of $[\cA,\cB,\cC]$. Further suppose that the zeros of $[\cA,\cB,\cC]$ are semi-simple and denote the real zeros of $[\cA,\cB,\cC]$ by $\mu_1,\ldots,\mu_{k}$. For $i=1,\ldots,k$, let $\begin{bsmallmatrix}
	v_i \\ w_i
	\end{bsmallmatrix}\in\begin{bsmallmatrix}
	-\mu_i I_n+\cA&B\\\cC&0
	\end{bsmallmatrix}$.
	Then we call \begin{equation*}
	\left(\left(-\mathrm{sign}(v_1\T Sv_1),\mu_1\right),\ldots,\left(-\mathrm{sign}(v_k\T Sv_k),\mu_k\right)\right)
	\end{equation*}
	the \emph{zero sign characteristics} of $[\cA,\cB,\cC]$. We say that an eigenvalue $\lambda\in\R$ of $\cA$ is a pole of positive (negative) type of $[\cA,\cB,\cC]$, if $(1,\lambda)$ $((-1,\lambda))$ is contained in the pole sign characteristics of $[\cA,\cB,\cC]$. Similarly, we say that a zero $\mu\in\R$ of $[\cA,\cB,\cC]$ is a zero of positive (negative) type of $[\cA,\cB,\cC]$ if $(1,\mu)$ $((-1,\mu))$ is contained in the zero sign characteristics of $[\cA,\cB,\cC]$.
\end{definition}
Straightforward calculations show that the system $[\cA,\cB,\cC]$ with the properties as in Definition~\ref{def:polezerosignchar} has a~symmetric transfer function. 
On the other hand, note that the notions of pole and zero sign characteristics are defined for symmetric and invertible transfer functions in~\cite[Sec.~II.3.2]{GohbLanc83} by means of pole and zero sign characteristics of a~minimal realization. It is further shown that these are well-defined in the sense that they do not depend on the minimal realization of a~given symmetric and invertible transfer function. The basis for this is that, by the results in~\cite[Sec.~II.3.2]{GohbLanc83}, for any realization $[\cA,\cB,\cC]$ of a~symmetric and invertible transfer function $\tfG(s)\in\R(s)^{m\times m}$, there exists a~unique nonsingular Hermitian matrix $S$ with
\begin{align}
	S\cA &=\cA\T S, & S\cB&=\cC\T, & \cC&=\cB\T S, \label{symmetry of W} \end{align}
Further, for two minimal realizations $[\cA_i,\cB_i,\cC_i]$, $i=1,2$ of $\tfG(s)$, with Hermitian matrices $S_{1}$ and $S_{2}$ from~\eqref{symmetry of W}, the unique state space transformation $T\in\Gl_n(\R)$ between the two realizations fulfills
\begin{equation}\label{eq:change in int symmetry}
	T^{-1}=S_{2}^{-1}T\T S_{1}.
\end{equation}

\end{section}


\begin{section}{Positive real balanced realizations of second order systems}\label{sec:balancing}
The aim in this part is to prove the block structure, introduced in~\eqref{balanced so0}, of the reduced system from Section~\ref{sec: prbt so}. For this purpose, we apply three successive transformations to this system. First, we derive a different first order representation of our second order system. Then develop an input-output normal form from which one can read off the different types of system zeros, namely the real and complex ones and those on the imaginary axis. We use this form to arrive at a positive real balanced realization of the original system and deduce that the reduced model has a balanced realization of the same block structure. Later in Section~\ref{sec:constMDK}, in order to find a second order realization of the reduced system, we actually proceed conversely.

As a~first step towards a~positive real balanced system we consider a~system of the form~\eqref{eq:ABstruc2} with a~special structure which displays the zeros on the imaginary axis.

\begin{lemma}\label{inverse real lemma}
	Let a~system $[\cA,\cB,\cC]$ be given
	with $\cA\in\R^{2n\times 2n}$ and $\cB\in\R^{2n\times m}$ structured as in \eqref{eq:ABstruc2} for some $D\in\R^{n\times n}$ with $D=D^\top$, $G\in\Gl_n(\R)$ and $B\in\R^{n\times m}$ with $\ker B=\{0\}$. Then the transfer function $\tfG(s)$ of $[\cA,\cB,\cC]$ is invertible. Moreover, there exists a state space transformation $T=\diag(T_1,T_2)\in\Gl_{2n}(\R)$ with orthogonal $T_1,T_2\in\Gl_{n}(\R)$, such that $[\cA_{\rms},\cB_{\rms},\cC_{\rms}]=[T^{-1}\cA T,T^{-1}\cB,\cC T]$ has the form
	\begin{equation}\label{zero char form}
	\begin{split}
	\cA_{\rms}=\begin{bmatrix}
	0 & 0 & 0 & 0 & 0 & G_{31}\T \\
	0 & 0 & 0 & 0 & G_{22}\T & G_{32}\T \\
	0 & 0 & 0 & G_{13}\T & 0 & G_{33}\T \\
	0 & 0 &-G_{13}& -D_{11} & 0& -D_{13} \\
	0& -G_{22} & 0 & 0 &0& 0\\
	-G_{31}& -G_{32} & -G_{33} & -D_{13}\T &0& -D_{33}
	\end{bmatrix}, \ \cB_{\rms}=\begin{bmatrix}
	0 \\ 0 \\ 0 \\ 0 \\ 0 \\ B_3
	\end{bmatrix}= \cC_{\rms}\T,
	\end{split}
	\end{equation}
	where, for some $\ell\in\N$, the blocks in the above form are of sizes $m$, $\ell$, $n-m-\ell$, $n-m-\ell$, $\ell$ and $m$. Further, $G_{11},B_3\in\R^{m\times m}$, $G_{22}\in\R^{\ell\times \ell}$ and $G_{13}\in\R^{(n-m-\ell)\times(n-m-\ell)}$ are invertible. All eigenvalues of $\begin{bsmallmatrix}0&G_{13}\T\\-G_{13}&-D_{11}\end{bsmallmatrix}$ have negative real part.
	
	Moreover, the set of zeros of $[\cA,\cB,\cC]$ is given by the union of $\{0\}$ and the spectra of  $\begin{bsmallmatrix} 0 & G_{13}\T\\-G_{13} & -D_{11}\end{bsmallmatrix}$ and $\begin{bsmallmatrix} 0 & G_{22}\T\\-G_{22} & 0\end{bsmallmatrix}$.
	If further, $[\cA,\cB,\cC]$ has semi-simple zeros, then all zero sign characteristics of $[\cA,\cB,\cC]$ at zero are of positive type, whereas the sign characteristics of the remaining real zeros coincide with the sign characteristics of 
\[\left(\begin{bmatrix}
	I_{n-m-\ell} & 0\\  0&-I_{n-m-\ell} 
	\end{bmatrix}, \begin{bmatrix}
	 0 & G_{13}\T\\ -G_{13} & -D_{11}
	\end{bmatrix}\right).\]
\end{lemma}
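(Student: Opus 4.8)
The plan is to first dispose of invertibility and then reduce the structural statement to a clean construction of $T_1,T_2$. Since $\tfG(s)=sB^\top(s^2I_n+sD+K)^{-1}B$ with $K=GG^\top$ behaves like $\tfrac1s B^\top B$ as $s\to\infty$, and $B^\top B$ is positive definite because $\ker B=\{0\}$, the rational matrix $\tfG(s)$ is invertible. For the transformation I observe that any $T=\diag(T_1,T_2)$ with orthogonal $T_1,T_2$ preserves the structure \eqref{eq:ABstruc2}: one computes $T^{-1}\cA T=\bigl[\begin{smallmatrix}0 & \hat G^\top\\-\hat G & -\hat D\end{smallmatrix}\bigr]$ and $T^{-1}\cB=\bigl[\begin{smallmatrix}0\\\hat B\end{smallmatrix}\bigr]$ with $\hat G=T_2^\top G T_1$, $\hat D=T_2^\top D T_2$, $\hat B=T_2^\top B$ and, crucially, $\hat K:=\hat G\hat G^\top=T_2^\top K T_2$ depending on $T_2$ only. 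Thus it remains to choose orthogonal $T_1,T_2$ so that $\hat B,\hat D,\hat G$ acquire the block pattern of \eqref{zero char form}.

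The choice of $T_2$ rests on the \emph{constrained} system living on $\mathcal N:=\ker B^\top$. Writing $\Pi$ for the orthogonal projector onto $\mathcal N$ and $K_c:=\Pi K\Pi|_{\mathcal N}>0$, $D_c:=\Pi D\Pi|_{\mathcal N}\ge0$, a short computation with the zero pencil shows that the nonzero zeros of $[\cA,\cB,\cC]$ are exactly the eigenvalues of the quadratic pencil $\lambda^2 I+\lambda D_c+K_c$ on $\mathcal N$, and that a purely imaginary zero $\ri\omega$ has $\dot p$-component $v_2\in\mathcal N$ with $Dv_2=0$ and $K_cv_2=\omega^2 v_2$. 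The key structural fact I will prove is $\{w\in\mathcal N:D_cw=0\}=\ker D\cap\mathcal N$: for $w\in\mathcal N$ the condition $\Pi Dw=0$ forces $Dw\in\mathcal N^\perp$, hence $w^\top Dw=\langle w,Dw\rangle=0$, and $D\ge0$ gives $Dw=0$. This places the undamped modes inside $\ker D$, which is what makes an orthogonal separation possible. I then take $\mathcal U$ to be the largest $K_c$-invariant subspace of $\mathcal N$ contained in $\ker D$, set $\ell:=\dim\mathcal U$, $\mathcal V:=\mathcal N\ominus\mathcal U$, and let $T_2=[\,V_{\mathcal V}\ V_{\mathcal U}\ V_B\,]$ run through orthonormal bases of $\mathcal V$, $\mathcal U$, $\im B$. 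Because $\mathcal U\subseteq\ker D$, $\mathcal U\perp\im B$ and $\mathcal V\perp_{K_c}\mathcal U$, this already yields $\hat B=\bigl[\begin{smallmatrix}0&0&B_3^\top\end{smallmatrix}\bigr]^\top$ with $B_3$ invertible and the claimed zero pattern of $\hat D$ and $\hat K$, with $D_{11}=V_{\mathcal V}^\top D V_{\mathcal V}$ and $K_{11}=G_{13}G_{13}^\top>0$.

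With $T_2$ fixed, I choose $T_1$ to produce the column echelon form of $\hat G=T_2^\top G T_1$. Since $G$ is invertible, $\mathcal K:=\ker(V_{\mathcal V}^\top G)=G^{-1}(\mathcal V^\perp)$ has dimension $m+\ell$ and $\mathcal K\cap\ker(V_{\mathcal U}^\top G)=G^{-1}(\im B)$ has dimension $m$; letting the first $m$ columns of $T_1$ span $G^{-1}(\im B)$, the next $\ell$ span $\mathcal K\ominus G^{-1}(\im B)$, and the last $n-m-\ell$ span $\mathcal K^\perp$, the relation $V_{\mathcal U}^\top K V_{\mathcal V}=0$ together with surjectivity of $V_{\mathcal U}^\top G|_{\mathcal K}$ forces exactly the displayed zeros of $\hat G$ and the invertibility of $G_{13},G_{22},G_{31}$. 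The transformed zero pencil then decouples, giving the zero at $0$ (of geometric multiplicity $m$) together with the eigenvalues of $\cA_{\mathcal V}:=\bigl[\begin{smallmatrix}0&G_{13}^\top\\-G_{13}&-D_{11}\end{smallmatrix}\bigr]$ and $\cA_{\mathcal U}:=\bigl[\begin{smallmatrix}0&G_{22}^\top\\-G_{22}&0\end{smallmatrix}\bigr]$. The block $\cA_{\mathcal U}$ has nonzero purely imaginary spectrum ($G_{22}$ invertible), while $\cA_{\mathcal V}$ linearizes a constrained system with $K_{11}>0$, $D_{11}\ge0$ and, by maximality of $\mathcal U$ and the key fact above, no undamped mode, so all its eigenvalues have negative real part.

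Finally, the sign characteristics (under the semi-simplicity hypothesis). The signature $S=\sS_n$ of \eqref{symmetry of W} is preserved since $T^\top\sS_n T=\sS_n$ for orthogonal $T=\diag(T_1,T_2)$. Reading off the zero eigenvectors of the transformed pencil: at $\mu=0$ they are $(v_1,0,\dots,0)$ with $v_1\in\R^m$, so $v^\top S v=-\|v_1\|^2<0$ and the zero sign characteristic $-\mathrm{sign}(v^\top S v)$ equals $+1$, i.e.\ positive type. At a real nonzero zero $\mu$ (necessarily an eigenvalue of $\cA_{\mathcal V}$, since $\cA_{\mathcal U}$ has no real eigenvalue) the eigenvector is $(0,0,v_3,v_4,0,0)$ with $\bigl[\begin{smallmatrix}v_3\\v_4\end{smallmatrix}\bigr]$ an eigenvector of $\cA_{\mathcal V}$, so $v^\top S v=-\|v_3\|^2+\|v_4\|^2$ and $-\mathrm{sign}(v^\top S v)=\mathrm{sign}(\|v_3\|^2-\|v_4\|^2)$; by Proposition~\ref{prop: compute signs} this is exactly the type of $\mu$ for the pair $\bigl(\diag(I_{n-m-\ell},-I_{n-m-\ell}),\cA_{\mathcal V}\bigr)$, the indefinite signature $\diag(I,-I)$ (rather than $\diag(-I,I)$) arising precisely from the sign reversal in the definition of the zero sign characteristics combined with $\sS_n=\diag(-I_n,I_n)$. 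I expect the main obstacle to be the third paragraph: coordinating the two orthogonal factors $T_1,T_2$ so that $\hat B,\hat D,\hat K,\hat G$ attain the exact block form simultaneously, with invertible corner blocks and asymptotically stable $\cA_{\mathcal V}$; the conceptual point that unlocks it is the identity $\ker D_c=\ker D\cap\mathcal N$ of the second paragraph.
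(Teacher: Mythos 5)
Your proof is correct and establishes all four conclusions, but it organizes the construction geometrically where the paper proceeds by explicit matrix factorizations, so a comparison is worthwhile. The paper builds $T_1,T_2$ through successive QR decompositions and an SVD of the constrained coupling block $\widetilde G_{12}$, splitting each eigenspace of the resulting diagonal matrix along its intersection with the kernel of the constrained damping; stability of $\begin{bsmallmatrix}0&G_{13}\T\\-G_{13}&-D_{11}\end{bsmallmatrix}$ is then proved using that $G_{13}$ is diagonal and $D_{11}v\neq0$ on its eigenvectors. You instead define $\mathcal U$ as the largest $K_c$-invariant subspace of $\mathcal N=\ker B\T$ inside $\ker D$ and obtain stability from maximality: a purely imaginary mode of the $\mathcal V$-block would yield $w\in\mathcal V$ with $Dw=0$ and $K_cw=\omega^2w$, so $\mathcal U\oplus\mathrm{span}\{\operatorname{Re}w,\operatorname{Im}w\}$ would enlarge $\mathcal U$ — a contradiction. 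Since $K_c$ is symmetric, your $\mathcal U$ is exactly the direct sum of the eigenspace–kernel intersections the paper produces, so the two decompositions coincide in substance; your identity $\ker D_c=\ker D\cap\mathcal N$ makes explicit what the paper's eigenspace splitting uses implicitly. You also get invertibility of $\tfG(s)$ more cheaply, from $s\,\tfG(s)\to B\T B>0$ as $s\to\infty$, and the zero set from the constrained pencil $\lambda^2I+\lambda D_c+K_c$ on $\mathcal N$, where the paper reads both off a determinant factorization; your eigenvector bookkeeping for the sign characteristics, including the reversal to $\diag(I,-I)$ coming from $-\mathrm{sign}(v\T\sS_nv)$, matches the paper's Step~4 exactly. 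Two minor remarks: the vanishing of the $(2,3)$ block of $\hat G$, which you attribute somewhat loosely to $V_{\mathcal U}\T KV_{\mathcal V}=0$ and surjectivity, does follow cleanly — row one of $\hat G$ gives $G\T V_{\mathcal V}=T_{1,3}G_{13}\T$, hence $0=V_{\mathcal U}\T KV_{\mathcal V}=\bigl(V_{\mathcal U}\T GT_{1,3}\bigr)G_{13}\T$ and invertibility of $G_{13}$ kills the block — and, like the paper's own proof (Step~2 uses $D_{11}\geq0$), your key fact relies on $D\geq0$ even though the lemma's hypothesis lists only $D=D\T$; both arguments share this reliance on the standing assumption, so it does not distinguish them.
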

\begin{proof}
{\em Step 1:} We prove the existence of a~block diagonal state space transformation $T\in\Gl_{2n}(\R)$, such that $[\cA_{\rms},\cB_{\rms},\cC_{\rms}]=[T^{-1}\cA T,T^{-1}\cB,\cC T]$ has the form
 \eqref{zero char form} such that $B_3$, $G_{13}$, $G_{22}$ and $G_{31}$ are invertible, and $D_{11}v\neq0$ for each eigenvector $v\in\R^{n-m-\ell}\setminus\{0\}$ of $G_{13}$.
 
 Since $B$ has full column rank, we have a~QR-decomposition $B={T}_{21}\left[\begin{smallmatrix}0\\B_3\end{smallmatrix}\right]$ with invertible $B_3$. Further, by QR-decomposition of $G^\top T_{21}$ and permutation of rows, we see that there exists some orthogonal $T_{11}\in\R^{n\times n}$ with
\[
  G^\top T_{21}=T_{11}\T\begin{bmatrix}0_{m\times (n-m)}&\widetilde{G}_{21}^\top\\\widetilde{G}_{12}^\top&\widetilde{G}_{22}^\top\end{bmatrix}.
  \]
Now applying the state space transformation $\hat{T}_1=\diag(T_{11},T_{21})$ to $[\cA,\cB,\cC]$ we obtain a~system of the form
\[\hat{T}_1^{\top}\cA \hat{T}_1=\begin{bmatrix}0&0&0&\tilde{G}_{21}\T\\0&0&\tilde{G}_{12}\T&\tilde{G}_{22}\T\\
0&-\tilde{G}_{12}&-\tilde{D}_{11}&-\tilde{D}_{12}\\
-\tilde{G}_{21}&-\tilde{G}_{22}&-\tilde{D}_{12}\T&-\tilde{D}_{22}
\end{bmatrix},\quad \hat{T}_1^{\top}\cB=\big(\cC\hat{T}_1\big)\T=\begin{bmatrix}0\\0\\0\\B_3\end{bmatrix}.\]
Our next step is to apply a~further state space transformation which separates the purely imaginary eigenvalues of
$\begin{bsmallmatrix}0&\tilde{G}_{12}\T\\
-\tilde{G}_{12}&-\tilde{D}_{11}
\end{bsmallmatrix}$
from those with negative real part. To this end, we perform a~singular value decomposition $\widetilde{G}_{12} = T_{12} \hat{G}_{12} T_{22}\T$ with orthogonal matrices $T_{12},T_{22} \in \R^{(n-m) \times (n-m)}$ and a diagonal matrix $\hat{G}_{12} \in \R^{(n-m)\times (n-m)}$. Further, for each eigenspace of $\widehat{G}_{12}$, we perform an orthogonal decomposition into the intersection of this eigenspace with $\ker T_{22}\T\tilde{D}_{11}T_{22}$ and its orthogonal complement in this eigenspace. An accordant orthogonal transformation along with a~permutation matrix leads to the existence of some orthogonal matrices $T_{23},T_{13}\in\R^{(n-m)\times (n-m)}$ that lead to the following transformed matrices: First, for some $\ell\in\N$, $G_{22}\in\R^{\ell\times\ell}$, $G_{13}\in\R^{(n-m-\ell)\times(n-m-\ell)}$ we have
\[
T_{23}\T \widetilde{G}_{12} T_{13} = \begin{bmatrix}0&G_{13}\\G_{22}&0\end{bmatrix}.
\]
Moreover, for some $D_{11}\in \R^{(n-m-\ell)\times(n-m-\ell)}$, $D_{13}\in \R^{(n-m-\ell)\times m}$, $D_{33}\in \R^{m\times m}$ it holds that
\[
\begin{bmatrix}
T_{23}\T T_{22}\T&0\\0&I_m
\end{bmatrix}T_{21}\T DT_{21}\begin{bmatrix}
T_{22}T_{23}&0\\0&I_m
\end{bmatrix}=\begin{bmatrix}
D_{11}&0&D_{13}\\0&0&0\\D_{13}\T&0&D_{33}
\end{bmatrix}.
\]
Last, for each eigenvector $v\in\R^{n-m-\ell}\setminus\{0\}$ of the diagonal matrix $G_{13}$ we have $D_{11}v\neq0$ and for some $G_{31}\in\R^{m\times m}$, $G_{32}\in\R^{m\times(n-m-\ell)}$, $G_{33}\in\R^{m\times(n-m-\ell)}$ it holds that
\[\begin{bmatrix}
T_{23}\T T_{22}\T&0\\0&I_m
\end{bmatrix}T_{21}\T GT_{11}\begin{bmatrix}
I_m&0\\0&T_{12}T_{13}
\end{bmatrix}=\begin{bmatrix}
0&0&G_{13}\\
0&G_{22}&0\\
G_{31}&G_{32}&G_{33}
\end{bmatrix}.\]
In particular, $G_{13}$,  $G_{22}$ and $G_{31}$ are invertible since $G$ is invertible. Altogether, for the orthogonal matrices $T_1=T_{11}\begin{bsmallmatrix}
I_m&0\\0&T_{12}T_{13}
\end{bsmallmatrix}$, $T_2=T_{21}\begin{bsmallmatrix}
T_{22}T_{23} & 0 \\ 0 & I_m
\end{bsmallmatrix}$, a state space transformation with $T=\diag(T_1,T_2)\in\Gl_{2n}(\R)$ results into a~system $[\cA_{\rms},\cB_{\rms},\cC_{\rms}]=[T^{-1}\cA T,T^{-1}\cB,\cC T]$ which is of the form
\eqref{zero char form}.

{\em Step~2:} We prove that for the construction in Step~1 all eigenvalues of $\begin{bsmallmatrix}0&G_{13}\T\\-G_{13}&-D_{11}\end{bsmallmatrix}$ have negative real part.

The fact that all eigenvalues of $\begin{bsmallmatrix}0&G_{13}\T\\-G_{13}&-D_{11}\end{bsmallmatrix}$ have nonpositive real part follows from the fact that the sum of this matrix and its Hermitian is negative semidefinite. To show that it does not have any eigenvalues on the imaginary axis, assume that $\omega\in\R$, $v_1,v_2\in\C^{n-m-\ell}$ are given such that 
\begin{equation}\begin{bmatrix}0&G_{13}\T\\-G_{13}&-D_{11}\end{bmatrix}
\begin{pmatrix}v_1\\v_2\end{pmatrix}=\iunit \omega\begin{pmatrix}v_1\\v_2\end{pmatrix}.\label{eq:13eig}
\end{equation}
A~multiplication of \eqref{eq:13eig} from the right with $\begin{psmallmatrix}v_1\\v_2\end{psmallmatrix}^*$ and taking the real part yields $v_2^* D_{11} v_2=0$, whence, by $D_{11}\geq0$, $D_{11}v_2=0$. Hence
\begin{equation}\begin{bmatrix}0&G_{13}\T\\-G_{13}&0\end{bmatrix}
\begin{pmatrix}v_1\\v_2\end{pmatrix}=\iunit \omega\begin{pmatrix}v_1\\v_2\end{pmatrix}\;\wedge\;D_{11}v_2=0.\label{eq:13eig2}
\end{equation}
On the other hand, the first relation in \eqref{eq:13eig2} yields $G_{13} G_{13}\T v_2=\omega^2 v_2$, whence, by the fact that $G_{13}$ is diagonal, $v_2$ is an eigenvector of $G_{13}$. Then we obtain $v_2=0$ by the results of Step~1, and the invertibility of $G_{13}$ further gives rise to $v_1=0$.

{\em Step~3:} We show that the transfer function $\tfG(s)$ is invertible, and the set of zeros of $[\cA,\cB,\cC]$ is given by the union of $\{0\}$ and the spectra of  $\begin{bsmallmatrix} 0 & G_{13}\T\\-G_{31} & -D_{11}\end{bsmallmatrix}$ and $\begin{bsmallmatrix} 0 & G_{22}\T\\-G_{22} & 0\end{bsmallmatrix}$.

This follows by the fact that
\[\det\begin{bmatrix}
	-sI_{2n}+\cA & \cB \\ \cC & 0
	\end{bmatrix}=c\cdot s^{m}\cdot \det\begin{bmatrix}sI_{n-m-\ell} & -G_{13}\T\\ G_{13} & sI_{n-m-\ell}+D_{11}
	\end{bmatrix}\cdot\det\begin{bmatrix}sI_\ell & -G_{22}\T\\ G_{22} & sI_\ell
	\end{bmatrix}\]
for some $c\in\R\setminus\{0\}$.

{\em Step~4:} We prove the statement about the sign characteristics.

This follows, since $\begin{bsmallmatrix}v_3\\ v_4\end{bsmallmatrix}\in\C^{2(n-m-\ell)}$ is an eigenvector of
$\begin{bsmallmatrix}0&G_{13}\T\\
	-G_{13} & -D_{11}
	\end{bsmallmatrix}$ corresponding to the eigenvalue $\lambda\in\C$, if and only if there exists some $v_7\in\C^m$ such that 
\[\begin{bsmallmatrix}0\\ 0\\ v_3\\ v_4\\ 0\\0\\v_7\end{bsmallmatrix}\in\ker \begin{bmatrix}
	-\lambda I_{2n}+\cA_{\rms} & \cB_{\rms} \\ \cC_{\rms} & 0
	\end{bmatrix}.\]
The statement for the sign characteristics of the zeros at zero is completely analogous. 
\end{proof}

Starting from the second order system structured as in the above lemma, we can determine a~normal form which displays the different type of zeros. In particular, we find that solutions of the KYP inequalities from the positive real lemma are block diagonal matrices structured accordingly to the zero blocks. This helps us to derive a~positive real balanced system of the form~\eqref{balanced so0}. Moreover, in Section~\ref{sec:constMDK}, we take this normal form as a~basis to bring the reduced system back to second order form and to check a~necessary condition whether this is actually possible. Before we present the normal form we need a~small lemma.

\begin{lemma}\label{lem:skewsymm}
	Let $Y\in\R^{n\times n}$ be skew-symmetric and $Z\in\R^{n\times n}$ be symmetric and positive semidefinite. Then
	$ZY+Y\T Z\leq 0$, if and only if $ZY+Y\T Z=0$. 
\end{lemma}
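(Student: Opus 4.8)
The plan is to exploit the skew-symmetry of $Y$ to turn the expression into a commutator and then close the argument with a trace computation. First I would write $Y\T=-Y$, so that the matrix in question becomes
\[
M:=ZY+Y\T Z=ZY-YZ.
\]
A direct computation using $Z\T=Z$ and $Y\T=-Y$ gives $M\T=Y\T Z-ZY\T=-YZ+ZY=M$, so $M$ is symmetric. Hence the inequality $M\leq 0$ is meaningful in the sense of symmetric matrices, and the statement is an equivalence between $M\leq 0$ and $M=0$.

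The implication $M=0\Rightarrow M\leq 0$ is trivial. For the converse, the key observation is that $M$ has vanishing trace: by cyclicity of the trace,
\[
\operatorname{tr}M=\operatorname{tr}(ZY)-\operatorname{tr}(YZ)=0.
\]
Thus, assuming $M\leq 0$, the matrix $M$ is symmetric and negative semidefinite, so all of its (real) eigenvalues are nonpositive; yet they sum to $\operatorname{tr}M=0$. Therefore every eigenvalue must be zero, and a symmetric matrix all of whose eigenvalues vanish is the zero matrix, giving $M=0$.

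I do not anticipate any genuine obstacle here; the only points requiring care are verifying the symmetry of $M$ so that the spectral argument applies, and noting that the trace vanishes regardless of sign. In fact this reasoning uses only that $Z$ is symmetric and $Y$ is skew-symmetric, so the positive semidefiniteness hypothesis $Z\geq 0$ is not needed for the equivalence itself and is presumably included because it is the form in which the lemma will later be invoked.
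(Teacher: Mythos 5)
Your proof is correct, and in spirit it is the same argument as the paper's: both hinge on the fact that $ZY+Y^\top Z=ZY-YZ$ is a symmetric commutator, and close by combining semidefiniteness with a diagonal/trace computation. There is, however, a difference worth flagging: the paper's one-line proof literally asserts that the diagonal entries of $ZY+Y^\top Z$ vanish, and as stated that is false in general. For instance,
\[
Z=\begin{bmatrix}1&1\\1&1\end{bmatrix}\geq 0,\qquad Y=\begin{bmatrix}0&1\\-1&0\end{bmatrix}
\]
give $ZY+Y^\top Z=\begin{bmatrix}-2&0\\0&2\end{bmatrix}$, whose diagonal entries are nonzero; only their \emph{sum}, the trace, vanishes a priori (this does not contradict the lemma, since this matrix is not $\leq 0$). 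What you prove is exactly the correct repair: $\operatorname{tr}(ZY-YZ)=0$ unconditionally by cyclicity, and a symmetric negative semidefinite matrix with zero trace has all eigenvalues equal to zero, hence is zero. Equivalently, and probably what the paper's compressed wording intends, one can argue that $M\leq 0$ forces $M_{ii}\leq 0$, the zero trace then forces $M_{ii}=0$ for every $i$, and a semidefinite matrix with zero diagonal vanishes because $|M_{ij}|^2\leq M_{ii}M_{jj}$. Your closing observation is also accurate: the hypothesis $Z\geq 0$ is never used — symmetry of $Z$ is needed only so that $M$ is symmetric and the ordering $M\leq 0$ is meaningful, and skew-symmetry of $Y$ only to make $M$ a commutator; the semidefiniteness assumption merely reflects the setting in which the lemma is invoked, namely on principal submatrices of positive semidefinite KYP solutions in the proof of Theorem~\ref{nf io}.
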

\begin{proof}
Since by using $Z=Z\T$ and $Y=-Y\T$, an evaluation of the diagonal entries of $ZY+Y\T Z$ yields that these vanish, $ZY+Y\T Z\leq 0$ implies that $ZY+Y\T Z= 0$.
\end{proof}

\begin{theorem}\label{nf io}
	Let a~stabilizable system $[\cA,\cB,\cC]$ be given. Assume that $\cA\in\R^{2n\times 2n}$ and $\cB,\cC^\top\in\R^{2n\times m}$ are structured as in \eqref{eq:ABstruc2}  for some $G,D\in\R^{n\times n}$ with $D=D^\top\geq 0$, $G\in\Gl_n(\R)$ and $B\in\R^{n\times m}$. Suppose that the zeros of the system are semi-simple. Then there exists a~state space transformation $T\in\Gl_{2n}(\R)$ such that the system $[\cA_{\rmn}, \cB_{\rmn},\cC_{\rmn}]:=[T^{-1}\cA T,T^{-1}\cB,\cC T]$ has the block form
	\begin{equation}\label{input output normal form2}
		\cA_{\rmn}=\begin{bmatrix}
		0 & 0 & 0 & 0 & 0 & 0 & 0 & \cA_{18} \\
		0 & 0 & 0 & 0 & 0 & 0 & \cA_{27} & \cA_{28} \\
		0 & 0 & 0 & 0 & 0 & \cA_{36} & 0 & \cA_{38} \\
		0 & 0 & 0 & \cA_{44} & 0 & 0 & 0 & \cA_{48} \\
		0 & 0 & 0 & 0 & \cA_{55} & 0 & 0 & \cA_{58} \\
		0 & 0 & -\cA_{36}\T & 0 & 0 & \cA_{66} & 0 & \cA_{68} \\
		0 & -\cA_{27}\T & 0 & 0 & 0 & 0 & 0 & 0 \\
		-\cA_{18}^{\top} & -\cA_{28}^{\top} & -\cA_{38}^{\top} & -\cA_{48}^{\top} & \cA_{58}\T & \cA_{68}\T & 0 & \cA_{88}
		\end{bmatrix},  \quad \cC_{\rmn}\T=\cB_{\rmn}=\begin{bmatrix}
		0 \\ 0 \\ 0 \\ 0 \\ 0 \\ 0 \\ 0 \\ \cB_8
		\end{bmatrix},
	\end{equation}
	where
	\begin{enumerate}[a)]
		\item $\cA_{18},\cB_8\in\Gl_m(\R)$, $\cA_{27}\in\Gl_{\ell}(\R)$, $\cA_{36}\in\Gl_c(\R)$ and $\cA_{66}<0$;
		\item $\cA_{44}=\diag(\mu_1^+,\ldots,\mu_k^+)$ and $\cA_{55}=\diag(\mu_1^-,\ldots,\mu_k^-)$, with $\mu_1^{\pm}\leq\ldots\leq\mu_k^{\pm}< 0$. If $[\cA,\cB,\cC]$ is minimal, then the $\mu_i^+$ and the $\mu_i^-$ are the negative (real and nonzero) zeros of $[\cA,\cB,\cC]$ of positive and negative type, respectively;
		\item $n=k+c+\ell+m$ and $2\ell+m$ is the number of zeros of $[\cA,\cB,\cC]$ on $\iunit\R$ counted with multiplicities.
	\end{enumerate}
	Further, all solutions $P\geq 0$, $Q\geq0$ of the KYP inequalities $\sW_{[\cA_{\rmn},\cB_{\rmn},\cC_{\rmn},0]}(P)\leq 0$ and  $\sW_{[\cA_{\rmn}^{\top},\cC_{\rmn}\T,\cB_{\rmn}\T,0]}(Q)\leq 0$ have the block form $P=\diag(I_{m+\ell},P_2,I_{m+\ell})$ and $ Q=\diag(I_{m+\ell},Q_2,I_{m+\ell})$ for some $P_2,Q_2\in\R^{2(c+k)\times 2(c+k)}$.
\end{theorem}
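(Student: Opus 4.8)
The plan is to reach \eqref{input output normal form2} by two successive reductions and then to analyse the KYP solutions on the resulting block pattern. First I would apply Lemma~\ref{inverse real lemma} to transform $[\cA,\cB,\cC]$ (by an orthogonal, hence $\sS_n$-preserving, block-diagonal $T$) into the form \eqref{zero char form}, which already isolates the imaginary-axis zeros: the $m$-fold zero at the origin sits in the outer $(1,6)$-coupling $G_{31}$, the $2\ell$ zeros on $\iunit\R\setminus\{0\}$ in the lossless pair $\begin{bsmallmatrix} 0 & G_{22}^\top \\ -G_{22} & 0\end{bsmallmatrix}$, and all remaining (real-negative and genuinely complex) zeros in the asymptotically stable block $\cA_{13}:=\begin{bsmallmatrix} 0 & G_{13}^\top \\ -G_{13} & -D_{11}\end{bsmallmatrix}$. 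The pair $(S_{13},\cA_{13})$ with $S_{13}=\diag(-I,I)$, the restriction of $\sS_n$, is self-adjoint, diagonalizable (by semi-simplicity of the zeros), and has spectrum in the open left half-plane, so Corollary~\ref{right symmetry form} applies.

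\textbf{Second reduction and properties.} I would then embed the transformation $T_{13}$ from Corollary~\ref{right symmetry form} as $\hat T=\diag(I_{m+\ell},T_{13},I_{\ell+m})$. Since $T_{13}^\top S_{13}T_{13}$ is again a signature matrix, $\hat T$ preserves internal symmetry \eqref{symmetry of W} with a new signature, splitting $\cA_{13}$ into a diagonal negative part $\Lambda$ and a complex part $\begin{bsmallmatrix} 0 & \cV \\ -\cV & \cE\end{bsmallmatrix}$ with $\cV,\cE$ diagonal and negative definite. The crucial counting step is that $S_{13}$ has balanced signature $(c+k,c+k)$; matching it against the Corollary's signature $\diag(-I_{c+k_1},I_{k_2+c})$ forces $k_1=k_2$, so the real zeros split into exactly $k:=k_1=k_2$ of each type. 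After reordering into the block order $m,\ell,c,k,k,c,\ell,m$ of \eqref{input output normal form2} and recording signs via the $-\mathrm{sign}(v^\top\sS v)$ convention of Definition~\ref{def:polezerosignchar}, this yields precisely \eqref{input output normal form2}. Properties a)--c) are then read off: invertibility of $\cA_{18}=G_{31}^\top$, $\cB_8=B_3$, $\cA_{27}=G_{22}^\top$, $\cA_{36}=\cV$ and negativity of $\cA_{66}=\cE$ come from Lemma~\ref{inverse real lemma} and the Corollary; $\cA_{44}=\diag(\mu_i^+)$, $\cA_{55}=\diag(\mu_i^-)$ are the negative real eigenvalues of $\cA_{13}$, identified with the positive/negative-type real zeros via the sign-characteristic statement of Lemma~\ref{inverse real lemma} (and, under minimality, with the abstract zero sign characteristics); the counts $n=k+c+\ell+m$ and $2\ell+m$ follow from the block sizes. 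Keeping the internal symmetry intact through the non-orthogonal $\hat T$, and tracking the off-diagonal couplings into their correct slots, is the point that requires the most care here.

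\textbf{KYP solutions.} For the final statement fix any $P\ge0$ with $\sW_{[\cA_{\rmn},\cB_{\rmn},\cC_{\rmn},0]}(P)\le0$. Since the $(2,2)$-block of $\sW$ vanishes, the off-diagonal block must vanish, giving $P\cB_{\rmn}=\cC_{\rmn}^\top=\cB_{\rmn}$ and $M:=\cA_{\rmn}^\top P+P\cA_{\rmn}\le0$. As $\cB_{\rmn}=e_8\cB_8$ with $\cB_8$ invertible, the first equation pins the eighth block row and column of $P$ to the identity embedding. Next, block $1$ couples only to block $8$ in $\cA_{\rmn}$, so the pinned eighth row makes the diagonal block $M_{11}$ vanish; a negative semidefinite matrix with a zero diagonal block has the whole corresponding block row zero, and solving $M_{1j}=0$ (using invertibility of $\cA_{18},\cA_{27},\cA_{36},\cA_{44},\cA_{55}$) pins block $1$ to the identity and decouples it. The genuinely new difficulty is the lossless pair $\{2,7\}$: its principal block of $\cA_{\rmn}$, call it $\hat\cA_{27}$, is skew-symmetric, so the $\{2,7\}$-principal block of $M$ equals $\hat P\hat\cA_{27}+\hat\cA_{27}^\top\hat P$ with $\hat P:=P|_{\{2,7\}}\ge0$ and is $\le0$; by Lemma~\ref{lem:skewsymm} it vanishes, whence $\hat P$ commutes with $\hat\cA_{27}$ and $M_{2j}=M_{7j}=0$ for all $j$. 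Reading off the $(\{2,7\},\{3,4,5,6\})$-blocks of $M$ then gives homogeneous Sylvester equations $\hat\cA_{27}^\top X+X\,\hat\cA_{\bullet}=0$ whose coefficient spectra (purely imaginary versus open left half-plane, respectively positive real) are disjoint, so those couplings vanish; and the coupling to the pinned block $8$ gives $(\hat P-I)\begin{bsmallmatrix}\cA_{28}\\0\end{bsmallmatrix}=0$.

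\textbf{Main obstacle.} I expect the hard part to be the very last step, concluding $\hat P=I$. Since $\hat P-I$ commutes with $\hat\cA_{27}$, the relation $(\hat P-I)b=0$ with $b=\begin{bsmallmatrix}\cA_{28}\\0\end{bsmallmatrix}$ propagates to $(\hat P-I)\hat\cA_{27}^{\,j}b=0$ for all $j$, so $\hat P-I$ annihilates the controllable subspace of $(\hat\cA_{27},b)$. It remains to prove this pair controllable, which I would argue by contradiction: an uncontrollable mode yields a left eigenvector of $\hat\cA_{27}$ supported on blocks $\{2,7\}$ which, extended by zero on block $8$, is a left eigenvector of $\cA_{\rmn}$ at a purely imaginary eigenvalue that annihilates $\cB_{\rmn}$ -- contradicting stabilizability of $[\cA,\cB,\cC]$. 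Hence $\hat P=I$, i.e.\ $P_{22}=I_\ell$, $P_{77}=I_\ell$, $P_{27}=0$, and $P=\diag(I_{m+\ell},P_2,I_{m+\ell})$. Finally, the internal symmetry $\sS_n\cA_{\rmn}\sS_n=\cA_{\rmn}^\top$, $\cB_{\rmn}=\cC_{\rmn}^\top$ (together with the stabilizability of the dual noted in Remark~\ref{rem: sosap}) carries the entire argument onto the dual inequality, and since the block form $\diag(I_{m+\ell},\cdot,I_{m+\ell})$ is $\sS_n$-invariant, it yields $Q=\diag(I_{m+\ell},Q_2,I_{m+\ell})$.
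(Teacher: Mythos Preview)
Your proposal is correct and mirrors the paper's proof essentially step for step: first Lemma~\ref{inverse real lemma}, then Corollary~\ref{right symmetry form} applied to the stable block (with the signature-balancing argument $k_1=k_2=k$ made explicit), followed by the same cascade for the KYP solutions --- pinning block~$8$ via $P\cB_{\rmn}=\cC_{\rmn}^\top$, then block~$1$ via the zero diagonal entry of $M$, then Lemma~\ref{lem:skewsymm} on the lossless pair $\{2,7\}$, then Sylvester equations with disjoint spectra for the cross-couplings to $\{3,4,5,6\}$, and finally the controllability-from-stabilizability argument to force $\hat P=I$. The only cosmetic difference is that the paper packages the Sylvester step by first restricting $M$ to the principal submatrix on blocks $\{2,\dots,7\}$ (which is legitimate once $P_{1\,\cdot}=P_{\cdot\,8}=0$) and treats the $\{3,4,5,6\}$-block as a single matrix $\breve A$, whereas you phrase it entrywise; both yield the same equation $Y\breve P_2^\top+\breve P_2^\top\breve A^\top=0$.
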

\begin{proof}
{\em Step 1:} We show that there exists a~state space transformation $\widetilde{T}\in\Gl_{2n}(\R)$ such that $\widetilde{T}\T\sS_{n}\widetilde{T}=\sS_{n}$ and
$[\widetilde{\cA}_{\rmn}, \widetilde{\cB}_{\rmn},\widetilde{\cC}_{\rmn}]:=[\widetilde{T}^{-1}{\cA} \widetilde{T},\widetilde{T}^{-1}\cB,\cC \widetilde{T}]$ has the form~\eqref{input output normal form2}. Moreover, we show that this realization fulfills a)--c) in the above theorem.

    Without loss of generality to assume that the system is already in the form  $[\cA,\cB,\cC]=[\cA_{\rms},\cB_{\rms},\cC_{\rms}]$  as in Lemma~\ref{inverse real lemma}. In particular, all eigenvalues of $\hat{\cA}_{\rms}:=\begin{bsmallmatrix}
	 0 & G_{13}\T  \\
	 -G_{13} & -D_{11}
	\end{bsmallmatrix}\in\R^{2(n-m-\ell)\times2(n-m-\ell)}$
	have negative real part. 
	Since the non-real eigenvalues occur in pairs and $\hat{\cA}_{\rms}\in\R^{2(n-m-\ell)\times2(n-m-\ell)}$, there exists some $c\in\N_0$ such that $2c$ is the number of non-real eigenvalues  (counted with multiplicities). Hence, the number of real eigenvalues of $\hat{\cA}_{\rms}$ (again counted with multiplicities) is $2k$ for $k=n-c-\ell-m$.
	Now, according to Corollary~\ref{right symmetry form}, $k$ real eigenvalues $\mu_1^{+}\leq\ldots\leq\mu_k^{+}< 0$ of $(\sS_{n-m},\hat{\cA}_{\rms})$ are of positive type, whereas the remaining $k$ real eigenvalues $\mu_1^{-}\leq\ldots\leq\mu_k^{-}< 0$ are of negative type.
This corollary further implies that there exists some $\hT\in\Gl_{2(n-m)}$ with $\hT\T \sS_{n-m} \hT=\sS_{n-m}$ and	
	\begin{align*}
		\hT^{-1}\hA_{\rms}\hT=
		\begin{bmatrix}
		 0 & 0 & 0 & {\cA}_{36} \\
		 0 & {\cA}_{44} & 0 & 0  \\
		 0 & 0 & {\cA}_{55} & 0  \\
		 -{\cA}_{36}\T & 0 & 0 & {\cA}_{66} 
	\end{bmatrix},
	\end{align*}	
	where, for some $\eta_i< 0$,  $\nu_i,\in\R\backslash\{0\}$ for $i=1,\ldots,c$ we have
\[\begin{aligned}	
	{\cA}_{44}=&\,\diag(\mu^+_1,\ldots,\mu^+_k)\in\Gl_k(\R),& {\cA}_{55}=&\,\diag(\mu^-_1,\ldots,\mu^-_k)\in\Gl_k(\R),\\ {\cA}_{66}=&\,\diag(\eta_1,\ldots,\eta_{c})\in\Gl_c(\R),& {\cA}_{36}=&\diag(\nu_1,\ldots,\nu_{c})\in\Gl_k(\R).
	\end{aligned}\]
	Then $\widetilde{T}:=\diag(I_{m+\ell},\hT,I_{m+\ell})$ fulfills $\widetilde{T}\T\sS_{n}\widetilde{T}=\sS_{n}$, and an application of this state space transformation leads to a realization $[{\cA}_{\rmn},{\cB}_{\rmn},{\cC}_{\rmn}]$ of the desired form. Further, since $\cA_{18}=G_{31}\T$, $\cA_{27}=G_{22}\T$ and $\cB_8=B_3$, those matrices are invertible by Lemma~\ref{inverse real lemma}.  
	
	{\em Step 2:} Suppose that $P\geq0$ solves $\sW_{[{\cA}_{\rmn},{\cB}_{\rmn},{\cC}_{\rmn},0]}(P)\leq 0$ and partition $P=(P_{ij})_{i,j=1,\ldots,8}$ according to the block structure of ${\cA}_{\rmn}$. We show that $P_{11}=P_{88}=I_m$ and $P_{1i}=0$, $P_{j8}=0$  for $i=2,\ldots,8$ and $j=1,\ldots,7$.
	
	Suppose $P\geq0$ solves the KYP inequality. Then $P \cB_{\rmn}-\cC_{\rmn}^{\top}=0$ together with $\cB_8\in\Gl_m(\R)$ implies that $P_{j8}=0$, for $j=1,\ldots,7$ and $P_{88}=I_m$.
	This implies that the upper left block of size $m\times m$ of the left-hand side of $\cA_{\rmn}^{\top}P+P\cA_{\rmn}\leq 0$ is zero, whence all the corresponding off-diagonal blocks of $\cA_{\rmn}^{\top}P+P\cA_{\rmn}$ have to be zero as well. Equivalently, $-{\cA}_{27} P_{17}\T=0$, $-{\cA}_{36} P_{16}\T=0$, ${\cA}_{44}P_{14}\T=0$, ${\cA}_{55}P_{15}\T=0$, ${\cA}_{36}\T P_{13}\T+{\cA}_{66}P_{16}\T=0$ and ${\cA}_{27}\T P_{12}\T=0$.  Since ${\cA}_{27}$, ${\cA}_{36}$, ${\cA}_{44}$, ${\cA}_{55}$ and ${\cA}_{66}$ are invertible, $P_{1i}=0$ for $i=2,\ldots,8$.
	
	{\em Step 3:} We show that $P_{2i}=0$ and $P_{7i}=0$ for $i=3,\ldots,6$.
	
	We have that
	\begin{equation}\label{eq:purely imaginary blocks}
		\begin{bmatrix}
		0 & -{\cA}_{27} \\ {\cA}_{27}\T & 0
		\end{bmatrix}\begin{bmatrix}
		P_{22} & P_{27} \\
		P_{27}\T & P_{77}
		\end{bmatrix}+\underbrace{\begin{bmatrix}
		P_{22} & P_{27} \\
		P_{27}\T & P_{77}
		\end{bmatrix}}_{=:Z}\underbrace{\begin{bmatrix}
		0 & {\cA}_{27} \\ -{\cA}_{27}\T & 0
		\end{bmatrix}}_{=:Y}\leq 0,
	\end{equation}
	where, due to Lemma~\ref{lem:skewsymm}, equality holds. We set
	\begin{equation*}
		\brA
		:=\begin{bmatrix}
		0 & 0 & 0 & {\cA}_{36} \\ 0 & {\cA}_{44} & 0 & 0 \\ 0 & 0 & {\cA}_{55} & 0 \\
		-{\cA}_{36}\T & 0 & 0 & \cA_{66}
		\end{bmatrix}, \ \breve{P}_1:=\begin{bmatrix}
		P_{33} & \cdots & P_{36} \\
		\vdots & & \vdots \\
		P_{36}\T & \cdots & P_{66}
		\end{bmatrix}, \ \breve{P}_{2}:=\begin{bmatrix}
		P_{23}\T & P_{37} \\ P_{24}\T & P_{47} \\ P_{25}\T & P_{57} \\ P_{26}\T & P_{67}
		\end{bmatrix},
	\end{equation*}
	By considering the principal submatrix of ${\cA}_{\rmn}\T P+P\cA_{\rmn}$ obtained by removing the first and last $m$ rows, we obtain
	\begin{equation*}
		0\geq\begin{bmatrix}
		\brA & 0 \\
		0 & Y \\
		\end{bmatrix}\begin{bmatrix}
		\breve{P}_1 & \breve{P}_2 \\
		\breve{P}_2\T & Z \\
		\end{bmatrix}+\begin{bmatrix}
		\breve{P}_1 & \breve{P}_2 \\
		\breve{P}_2\T & Z \\
		\end{bmatrix}\begin{bmatrix}
		\brA\T & 0 \\
		0 & Y\T \\
		\end{bmatrix}=\begin{bmatrix}
		\brA\T\breve{P}_1+\breve{P}_1\brA & \brA \breve{P}_2+\breve{P}_2Y\T \\ Y \breve{P}_2+\breve{P}_2\brA\T & 0
		\end{bmatrix},
	\end{equation*}
	with $Z,Y$ as in~\eqref{eq:purely imaginary blocks}. Thus, we obtain the Sylvester equation $Y \breve{P}_2\T+\breve{P}_2\T\brA\T=0$ for $\breve{P}_2$. The spectrum of $Y$ is contained on the imaginary axis, and all eigenvalues of $\brA\T$ have negative real part. Since $Y$ and $-\brA\T$ have no common eigenvalues, we obtain from \cite[Thm.~2.4.4.1]{HornJohn12} that $\breve{P}_2=0$. 
	
{\em Step 4:} We show that $\left(\begin{bsmallmatrix}0&-\cA_{27}\\
\cA_{27}\T&0
\end{bsmallmatrix},\begin{bsmallmatrix}\cA_{28}\\0\end{bsmallmatrix}\right)$ is controllable.

Assume that $\lambda\in\C$ and $v_2, v_7\in\C^{\ell}$, such that
\[\begin{bmatrix}v_2^* & v_7^*\end{bmatrix}\begin{bmatrix}\lambda I_{\ell}&\cA_{27}&-\cA_{28}\\
-\cA_{27}\T&\lambda I_{\ell}&0\end{bmatrix}=0.\]
Since   $\begin{bsmallmatrix}0&-\cA_{27}\\
\cA_{27}\T&0
\end{bsmallmatrix}$ is skew-symmetric, it follows that $\lambda=\iunit\omega$ for some $\omega\in\R$. By setting $v:=\begin{bsmallmatrix}0 & v_2^* & 0& 0& 0& 0 & v_7^* & 0 \end{bsmallmatrix}^*\in\C^{2n}$ with block structure according to that of $\cA_{\rmn}$, we now obtain that $v^*\begin{bmatrix}
\iunit\omega I_{2n}-\cA_{\rmn} & \cB_{\rmn}
\end{bmatrix}=0$. Since $[\cA, \cB,\cC]$ is assumed to be stabilizable,   $[\cA_{\rmn}, \cB_{\rmn},\cC_{\rmn}]$ is stabilizable as well. This leads to $v=0$, whence $v_2=v_7=0$.

{\em Step~5:} Let $P\geq 0$, $Q\geq0$ be solutions of the KYP inequalities $\sW_{[\cA_{\rmn},\cB_{\rmn},\cC_{\rmn},0]}(P)\leq 0$ and  $\sW_{[\cA_{\rmn}^{\top},\cC_{\rmn}\T,\cB_{\rmn}\T,0]}(Q)\leq 0$. We show that $P=\diag(I_{m+\ell},P_2,I_{m+\ell})$ and $ Q=\diag(I_{m+\ell},Q_2,I_{m+\ell})$ for some $P_2,Q_2\in\R^{2(c+k)\times 2(c+k)}$.

Since the solutions $P\geq0$ fulfills $\sW_{[\cA_{\rmn},\cB_{\rmn},\cC_{\rmn},0]}(P)\leq 0$, if and only if $Q=\sS_n P\sS_n\geq0$ fulfills  $\sW_{[\cA_{\rmn}^{\top},\cC_{\rmn}\T,\cB_{\rmn}\T,0]}(Q)\leq 0$, it suffices to prove the statement only for $P\geq 0$ with $\sW_{[\cA_{\rmn},\cB_{\rmn},\cC_{\rmn},0]}(P)\leq 0$. We partition $P=(P_{ij})_{i,j=1,\ldots,8}$ according to the block structure of $\cA_{\rmn}$.
As $[\cA_{\rmn},\cB_{\rmn},\cC_{\rmn}]$ is structured as the system $[\widetilde{\cA}_{\rmn},\widetilde{\cB}_{\rmn},\widetilde{\cC}_{\rmn}]$, we can use our findings in Step~2 and Step~3 to see that
$P_{11}=P_{88}=I_m$ and $P_{1i}=0$, $P_{j8}=0$  for $i=2,\ldots,8$ and $j=1,\ldots,7$; and $P_{2i}=0$ and $P_{7i}=0$ for $i=3,\ldots,6$.
Now a~straightforward calculation now yields that
\begin{equation}\begin{bmatrix}0&-\cA_{27}&-\cA_{28}\\
\cA_{27}\T&0&0\\
\cA_{28}\T&0&\cA_{88}
\end{bmatrix}\begin{bmatrix}P_{22}&P_{27}&0\\
P_{27}\T&P_{77}&0\\0&0&I_m
\end{bmatrix}+\begin{bmatrix}P_{22}&P_{27}&0\\
P_{27}\T&P_{77}&0\\0&0&I_m
\end{bmatrix}\begin{bmatrix}0&\cA_{27}&\cA_{28}\\
-\cA_{27}\T&0&0\\
-\cA_{28}\T&0&\cA_{88}
\end{bmatrix}\leq0.\label{eq:33ineq}\end{equation}
An evaluation of the upper right two block gives
\[\begin{bmatrix}0&-\cA_{27}\\
\cA_{27}\T&0
\end{bmatrix}\begin{bmatrix}P_{22}&P_{27}\\
P_{27}\T&P_{77}
\end{bmatrix}+\begin{bmatrix}P_{22}&P_{27}\\
P_{27}\T&P_{77}\end{bmatrix}\begin{bmatrix}0&\cA_{27}\\
-\cA_{27}\T&0
\end{bmatrix}\leq0,\]
whence, by Lemma~\ref{lem:skewsymm}, the latter inequality becomes an equality. Invoking this, an evaluation of the blocks "31" and "32" leads to
\[\begin{bmatrix}\cA_{28}\T&0\end{bmatrix}\begin{bmatrix}P_{22}&P_{27}\\
P_{27}\T&P_{77}\end{bmatrix}=\begin{bmatrix}\cA_{28}\T&0\end{bmatrix}.\]
This altogether yields
\begin{align*}
    \begin{bmatrix}0&\cA_{27}\\
-\cA_{27}\T&0
\end{bmatrix}\left(I_{2\ell}-\begin{bmatrix}P_{22}&P_{27}\\
P_{27}\T&P_{77}
\end{bmatrix}\right)&=\left(I_{2\ell}-\begin{bmatrix}P_{22}&P_{27}\\
P_{27}\T&P_{77}\end{bmatrix}\right)\begin{bmatrix}0&\cA_{27}\\
-\cA_{27}\T&0
\end{bmatrix},\\ \begin{bmatrix}\cA_{28}\T&0\end{bmatrix}\left(I_{2\ell}-\begin{bmatrix}P_{22}&P_{27}\\
P_{27}\T&P_{77}\end{bmatrix}\right)&=0.
\end{align*}
Hence, $\im \left(I_{2\ell}-\begin{bsmallmatrix}P_{22}&P_{27}\\
P_{27}\T&P_{77}\end{bsmallmatrix}\right)$ is an $\begin{bsmallmatrix}0&\cA_{27}\\
-\cA_{27}\T&0
\end{bsmallmatrix}$-invariant subspace and is contained in $\ker\begin{bmatrix}\cA_{28}\T&0\end{bmatrix}$. 
However, by Step~4, $\left(\begin{bsmallmatrix}0&-\cA_{27}\\
\cA_{27}\T&0
\end{bsmallmatrix},\begin{bsmallmatrix}\cA_{28}\\0\end{bsmallmatrix}\right)$ is controllable, thus 
\[\im \left(I_{2\ell}-\begin{bmatrix}P_{22}&P_{27}\\
P_{27}\T&P_{77}\end{bmatrix}\right)=\{0\}\] and  
$P_{22}=P_{77}=I_\ell$ and $P_{27}=0$.
\end{proof}

With the previous theorem at hand and the normal form therein we can now exploit the structure of the original system and the solutions of the KYP inequalities to find a~positive real balanced realization of the original system of the form~\eqref{balanced so0}. 

\begin{theorem}\label{prbt second order}
	Let a~stabilizable system $[\cA,\cB,\cC]$ be given with transfer function $\tfG(s)$. Assume that $\cA\in\R^{2n\times 2n}$ and $\cB,\cC^\top\in\R^{2n\times m}$ are structured as in \eqref{eq:ABstruc2} for some $G,D\in\R^{n\times n}$ with $D=D^\top\geq 0$, $G\in\Gl_n(\R)$ and $B\in\R^{n\times m}$ with $\ker B=\{0\}$. Suppose that the system has semi-simple zeros. Then $\tfG(s)$ has a~positive real balanced realization $[\cA_{\rmb},\cB_{\rmb},\cC_{\rmb}]$ of the block form
	\begin{equation}\label{balanced so2}
		\begin{aligned}	\cA_{\rmb}=&
		\begin{bmatrix}
		0 & 0 & 0 & 0 & 0& \hA_{16} \\
		0  & 0 & 0 &0 & \hA_{25} & \hA_{26} \\
		0 & 0 & \hA_{33} & \hA_{34} & 0& \hA_{36} \\
		0 & 0 & -\hA_{34}^{\top} & \hA_{44} & 0 & \hA_{46} \\
		0 & -\hA_{25}^{\top} & 0 & 0 & 0 & 0 \\
		-\hA_{16}^{\top} & -\hA_{26}^{\top} & -\hA_{36}^{\top} & \hA_{46}^{\top} & 0 & \hA_{66}
		\end{bmatrix}, \ \cB_{\rmb}=\begin{bmatrix}
		0 \\ 0 \\ 0 \\ 0 \\ 0 \\ \hB_6
		\end{bmatrix}=\cC_{\rmb}\T,
		\end{aligned}
	\end{equation}
	where $\hA_{16},\hB_6\in\Gl_m(\R)$, $\hA_{66}\in\R^{m\times m}$, $\hA_{33}\in\R^{p_1\times p_1}$, $\hA_{44}\in\R^{p_2\times p_2}$, $\hA_{25}\in\R^{\ell\times \ell}$ and $\hn=2m+p_1+p_2+2\ell$ is the order of the minimal system. Further, the minimal solutions of the KYP inequalities $\sW_{[\cA_{\rmb},\cB_{\rmb},\cC_{\rmb},0]}(P)\leq 0$ and  $\sW_{[\cA_{\rmb}^{\top},\cC_{\rmb}\T,\cB_{\rmb}\T,0]}(Q)\leq 0$ are given by $P=Q=\diag(I_{m+\ell},\Pi,I_{m+\ell})$ for some diagonal matrix $\Pi\in\R^{(p_1+p_2)\times (p_1+p_2)}$.
	Moreover, the system
	\begin{equation}\label{eq:factor of inverse of Gr}
	    \left[\begin{bmatrix}
			\hA_{33} & \hA_{34} \\ -\hA_{34}\T & \hA_{44}
			\end{bmatrix},\begin{bmatrix}
			\hA_{36} \\ \hA_{46}
			\end{bmatrix},\begin{bmatrix}
			\hA_{36}\T & -\hA_{46}\T
			\end{bmatrix},-\hA_{66}\right]=:[\cA_{\rmz},\cB_{\rmz},\cC_{\rmz},\cD_{\rmz}]
	\end{equation}
	is asymptotically stable and positive real balanced, where $\Pi$ is the minimal solution of $\sW_{[\cA_{\rmz},\cB_{\rmz},\cC_{\rmz},\cD_{\rmz}]}(\hP)\leq 0$ and $\sW_{[\cA_{\rmz}\T,\cC_{\rmz}\T,\cB_{\rmz}\T,\cD_{\rmz}\T]}(\hQ)\leq 0$ and the spectrum of $\cA_{\rmz}$ coincides with the set of zeros of $[\cA_{\rmb},\cB_{\rmb},\cC_{\rmb}]$ with negative real part.
\end{theorem}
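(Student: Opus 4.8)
The plan is to collapse the whole construction to a positive real balancing of the central block of the input--output normal form. First I would invoke Theorem~\ref{nf io} to assume, without loss of generality, that $[\cA,\cB,\cC]=[\cA_{\rmn},\cB_{\rmn},\cC_{\rmn}]$ is already in the form~\eqref{input output normal form2}, with block sizes $m,\ell,c,k,k,c,\ell,m$. That theorem tells us that every solution $P\geq 0$ of $\sW_{[\cA_{\rmn},\cB_{\rmn},\cC_{\rmn},0]}(P)\leq 0$ has the shape $P=\diag(I_{m+\ell},P_2,I_{m+\ell})$, and likewise for the dual, so the blocks carrying the zero at the origin and the purely imaginary zeros are pinned to the identity and the only remaining freedom sits in the central block $P_2\in\R^{2(c+k)\times 2(c+k)}$ attached to the zeros with negative real part.

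Next I would pin down the matrix inequality governing $P_2$. Expanding $\sW_{[\cA_{\rmn},\cB_{\rmn},\cC_{\rmn},0]}\bigl(\diag(I_{m+\ell},P_2,I_{m+\ell})\bigr)$ block by block and using $P\cB_{\rmn}=\cC_{\rmn}^{\top}$ together with $\cB_8\in\Gl_m(\R)$, one checks that the port block and all blocks of $\cA_{\rmn}^{\top}P+P\cA_{\rmn}$ that involve the boundary indices $1,2,7$ vanish identically, while the principal submatrix on the central rows $3,4,5,6$ together with the last block $8$ is exactly the KYP matrix $\sW_{[\cA_\star,\cB_\star,\cC_\star,\cD_\star]}(P_2)$ of the subsystem
\begin{equation*}
[\cA_\star,\cB_\star,\cC_\star,\cD_\star]:=\left[\begin{bmatrix}0&0&0&\cA_{36}\\0&\cA_{44}&0&0\\0&0&\cA_{55}&0\\-\cA_{36}^{\top}&0&0&\cA_{66}\end{bmatrix},\begin{bmatrix}\cA_{38}\\\cA_{48}\\\cA_{58}\\\cA_{68}\end{bmatrix},\begin{bmatrix}\cA_{38}^{\top}&\cA_{48}^{\top}&-\cA_{58}^{\top}&-\cA_{68}^{\top}\end{bmatrix},-\cA_{88}\right].
\end{equation*}
Hence $\sW_{[\cA_{\rmn},\cB_{\rmn},\cC_{\rmn},0]}(P)\leq 0$ holds precisely when $\sW_{[\cA_\star,\cB_\star,\cC_\star,\cD_\star]}(P_2)\leq 0$. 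The state matrix $\cA_\star$ is similar to $\hat{\cA}_{\rms}$ from Lemma~\ref{inverse real lemma}, so its spectrum is contained in the open left half-plane and coincides with the negative real part zeros; and since the full inequality is solvable by Remark~\ref{rem: sosap}, the subsystem is positive real and its KYP inequality has a minimal solution.

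The crucial structural observation is then that $\sS_n$ restricted to the central rows is again a signature matrix $\hat S:=\diag(-I_c,-I_k,I_k,I_c)$, and that the relation ``$P$ solves the KYP inequality if and only if $\sS_n P\sS_n$ solves the dual one'' from Section~\ref{sec: prbt so} descends to the subsystem: $P_2$ solves $\sW_{[\cA_\star,\cB_\star,\cC_\star,\cD_\star]}(P_2)\leq 0$ if and only if $\hat S P_2\hat S$ solves the dual KYP inequality. Consequently the minimal solutions satisfy $Q_{2,\min}=\hat S P_{2,\min}\hat S$, and I would balance the subsystem exactly as in~\eqref{symmeig2}--\eqref{eq:redfo}, that is, by a single eigendecomposition acting only on the central block so that the boundary identities are untouched. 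Reducing to the minimal part and balancing yields a minimal, positive real balanced realization $[\cA_{\rmz},\cB_{\rmz},\cC_{\rmz},\cD_{\rmz}]$ with a common diagonal minimal solution $\Pi$; the sizes $p_1,p_2$ are the numbers of poles of the subsystem of each sign type, read off from the signature splitting of Corollary~\ref{right symmetry form}. Because the balanced realization is $\diag(-I_{p_1},I_{p_2})$-self-adjoint, its state matrix is forced into the skew-coupled shape $\begin{bsmallmatrix}\hA_{33}&\hA_{34}\\-\hA_{34}^{\top}&\hA_{44}\end{bsmallmatrix}$ with symmetric diagonal blocks, which is precisely the feature that later permits the second order realization.

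Finally I would reassemble. Applying the central balancing as $\diag(I_{m+\ell},\,\cdot\,,I_{m+\ell})$ leaves blocks $1,2,7,8$ unchanged and turns $[\cA_{\rmn},\cB_{\rmn},\cC_{\rmn}]$ into a realization of the block form~\eqref{balanced so2} of order $\hn=2m+p_1+p_2+2\ell$, with $\hA_{16}=\cA_{18}$, $\hA_{25}=\cA_{27}$ and $\hA_{66}=\cA_{88}$ so that the claimed invertibilities hold; the attached solution is $P_{\min}=Q_{\min}=\diag(I_{m+\ell},\Pi,I_{m+\ell})$. Since $\Pi$ is diagonal it commutes with $\sS_n$, whence $P_{\min}=Q_{\min}$, and by Definition~\ref{prbip} the realization is positive real balanced, while Lemma~\ref{lem:prb minimal} guarantees that the transfer function is unchanged. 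I expect the main obstacle to lie in the third step: one must simultaneously verify that the balancing preserves the indefinite $\sS$-symmetry (so the boundary blocks stay identities and $P_{\min}=Q_{\min}$) and that it forces both the skew-coupled structure of $\cA_{\rmz}$ and the correct $p_1,p_2$ split, all while keeping the negative real part spectrum intact under the reduction of the subsystem to its minimal part.
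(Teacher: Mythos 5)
Your proposal is correct and arrives at the same statement, but it reorganizes the paper's argument in a genuinely dual way. The paper also starts from the normal form of Theorem~\ref{nf io}, but then balances the \emph{full} system: it applies the global positive real balanced truncation of Section~\ref{sec: prbt so} with $r^{\pm}$ chosen so that exactly the zero characteristic values are removed, observes that the block structure $P_{\min}=\diag(I_{m+\ell},P_2,I_{m+\ell})$ forces the reduction matrices $W,V$ to be block diagonal (so the boundary blocks are untouched), invokes Theorem~\ref{thm: prop redsys}~b),~d) (suitably extended, as the proof notes) to get minimality, balancedness and the signature symmetry of $[\cA_{\rmb},\cB_{\rmb},\cC_{\rmb}]$ in one stroke, and only afterwards reads off, by a ``straightforward calculation'', that $\Pi$ solves the KYP inequalities of the subsystem \eqref{eq:factor of inverse of Gr}. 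You proceed in the opposite order: you first factor the KYP inequality through the central subsystem $[\cA_\star,\cB_\star,\cC_\star,\cD_\star]$ — and your block computation is indeed correct: with $P=\diag(I_{m+\ell},P_2,I_{m+\ell})$ every block of $\cA_{\rmn}^{\top}P+P\cA_{\rmn}$ meeting the indices $1,2,7$ vanishes and the remainder is exactly $\sW_{[\cA_\star,\cB_\star,\cC_\star,\cD_\star]}(P_2)$ — and then balance only the subsystem and embed back. This buys transparency (the role of $\Pi$ and the Hurwitz property of $\cA_{\rmz}$, via the similarity of $\cA_\star$ to $\hat{\cA}_{\rms}$ from Lemma~\ref{inverse real lemma}, become immediate), at the cost of a point you defer to a closing caveat but which really belongs in the proof: the subsystem is \emph{not} of the form \eqref{eq:ABstruc2} covered by Theorem~\ref{thm: prop redsys}, since it has nonzero feedthrough $\cD_\star=-\cA_{88}$ and the twisted symmetry $\hat S\cB_\star=-\cC_\star^{\top}$ for $\hat S=\diag(-I_c,-I_k,I_k,I_c)$ (note the sign flip, absent in the paper's setting where $\sS_n\cB=\cB$). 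The duality you assert does survive this sign: the dual KYP matrix at $\hat SP_2\hat S$ equals $\diag(\hat S,-I_m)\,\sW_{[\cA_\star,\cB_\star,\cC_\star,\cD_\star]}(P_2)\,\diag(\hat S,-I_m)$, whence $Q_{2,\min}=\hat SP_{2,\min}\hat S$ and the subsystem balancing preserves the signature structure, with $\cC_{\rmz}=-\cB_{\rmz}^{\top}\diag(-I_{p_1},I_{p_2})$ matching the sign pattern of \eqref{eq:factor of inverse of Gr} and \eqref{balanced so2} — but this verification must be carried out, exactly as the paper must inspect the proof of Theorem~\ref{thm: prop redsys} for its own extension. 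One further small imprecision: by Theorem~\ref{nf io}~b) the identification of the central spectrum with the zeros of negative real part is guaranteed only for minimal realizations, so the zero statement should be drawn, as in the paper, after the reduction to the minimal part rather than before it.
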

\begin{proof}
	Theorem~\ref{nf io} provides that $\tfG(s) $ has a realization $[\cA_{\rmn},\cB_{\rmn},\cC_{\rmn}]$ of the block form~\eqref{input output normal form2}, where $P_{\min}$ and $Q_{\min}$ have the block structure $P_{\min}=\diag(I_{m+\ell},P_{2},I_{\ell+m})$ and $Q_{\min}=\diag(I_{m+\ell},Q_{2},I_{\ell+m})$ for some matrices $P_{2},Q_{2}>0$.
	Let $[\cA_{\rmb},\cB_{\rmb},\cC_{\rmb}]$ be the reduced system of $[\cA_{\rmn},\cB_{\rmn},\cC_{\rmn}]$ constructed as in~\eqref{eq:redfo} for the choice $r^-= n-\dim\ker\Sigma^{-}$ and $r^+= n-\dim\ker\Sigma^{+}$. Here, the matrices $W\T$ and $V$ from~\eqref{eq:redfo} have the same block structure as $P_{\min}$, namely $W\T=\diag(I_{m+\ell},W_{2}\T,I_{m+\ell})$ and  $V=\diag(I_{m+\ell},V_{2},I_{m+\ell})$ for some matrices $W_{2},V_{2}\in\R^{(p_1+p_2)\times (p_1+p_2)}$. Inspecting the proof of Theorem~\ref{thm: prop redsys}, its statements are also valid for the reduced system $[\cA_{\rmb},\cB_{\rmb},\cC_{\rmb}]$. Thus, by Theorem~\ref{thm: prop redsys}~d)~and~c) it follows that $[\cA_{\rmb},\cB_{\rmb},\cC_{\rmb}]$ is a~minimal, positive real balanced realization of $\tfG(s)$ and $\diag(-I_{r^-},I_{r^+})\cA_{\rmb}\diag(-I_{r^-},I_{r^+})=\cA_{\rmb}\T$.
	 Altogether, the realization $[\cA_{\rmb},\cB_{\rmb},\cC_{\rmb}]$ has the block form~\eqref{balanced so2}.
    Moreover, Theorem~\ref{nf io} and the block form of the reduction matrices $V$ and $W$ provide that all solutions of the KYP inequalities $\sW_{[\cA_{\rmb},\cB_{\rmb},\cC_{\rmb},0]}(P)\leq 0$ and  $\sW_{[\cA_{\rmb}^{\top},\cC_{\rmb}\T,\cB_{\rmb}\T,0]}(Q)\leq 0$ have the block form $P=\diag(I_{m+\ell},P_2,I_{m+\ell})$ and $ Q=\diag(I_{m+\ell},Q_2,I_{m+\ell})$ for some $P_2,Q_2\in\R^{(p_1+p_2)\times (p_1+p_2)}$. This in particular holds for the minimal solutions of the two KYP inequalities, which we therefore write as $\diag(I_{m+\ell},\Pi,I_{m+\ell})$ for some suitable diagonal matrix $\Pi\in\R^{(p_1+p_2)\times (p_1+p_2)}$.
    
    Straightforward calculations show that the matrix $\Pi$ solves $\sW_{[\cA_{\rmz},\cB_{\rmz},\cC_{\rmz},\cD_{\rmz}]}(\hP)\leq 0$ and $\sW_{[\cA_{\rmz}\T,\cC_{\rmz}\T,\cB_{\rmz}\T,\cD_{\rmz}\T]}(\hQ)\leq 0$.
    Now, the minimality of $\Pi$ follows directly from the minimality of $\diag(I_{m+\ell},\Pi,I_{m+\ell})$. \\
	Recall that the zeros of $[\cA_{\rmn},\cB_{\rmn},\cC_{\rmn}]$ with negative real part are given by the union of the eigenvalues of $\cA_{44},\cA_{55}$ and $\begin{bsmallmatrix} 0 & \cA_{36} \\ -\cA_{36}\T & \cA_{66} \end{bsmallmatrix}$ from~\eqref{input output normal form2}. As positive real balanced realizations are minimal, the uncontrollable and unobservable modes of $[\cA_{\rmn}, \cB_{\rmn},\cC_{\rmn}]$ are removed by the above construction of $[\cA_{\rmb},\cB_{\rmb},\cC_{\rmb}]$, while the remaining ones are preserved. The latter are, however, the eigenvalues of $\cA_{\rmz}$, which leads to the fact that the spectrum of $\cA_{\rmz}$ is contained in the open left complex half plane.
\end{proof}

The following remark is devoted to positive real balanced truncation of systems $[\cA,\cB,\cC]$ structured as in \eqref{eq:ABstruc2} for some $G,D\in\R^{n\times n}$ with $D=D^\top\geq 0$, $G\in\Gl_n(\R)$ and $B\in\R^{n\times m}$ by using the approach as in \eqref{symmeig2}--\eqref{eq:redfo}. 
\begin{remark}\label{rem:struc redsysso}
Let a~stabilizable system $[\cA,\cB,\cC]$ be given. Assume that $\cA\in\R^{2n\times 2n}$ and $\cB,\cC^\top\in\R^{2n\times m}$ are structured as in \eqref{eq:ABstruc2}  for some $G,D\in\R^{n\times n}$ with $D=D^\top\geq 0$, $G\in\Gl_n(\R)$ and $B\in\R^{n\times m}$. Suppose that the system has semi-simple zeros. We apply positive real balanced truncation by using the approach as in \eqref{symmeig2}--\eqref{eq:redfo}. In particular, we assume that there exist $r^+,r^-\in\N$ that fulfill~\eqref{symmeig2}--\eqref{eq:rpm 1} and $r^+=r^-=:r$ such that the positive real characteristic values $\sigma_{q^{\pm}+1}^{\pm},\ldots,\sigma_{h^{\pm}}^\pm$ are all strictly below one. Note that Theorem~\ref{nf io} implies that $r\geq \ell+m$, where $2\ell$ is the number of nonzero and purely imaginary zeros of $[\cA,\cB,\cC]$ (counted with multiplicities).

A~positive real realization of the reduced order system can be directly constructed from the realization $[\cA_{\rmb},\cB_{\rmb},\cC_{\rmb}]$ in Theorem~\ref{prbt second order} by truncating certain rows and columns belonging to the third and fourth block rows of
the matrices in \eqref{balanced so2}. More precisely, the reduced order model has a~realization of the form
\begin{equation}\label{eq:trunc prbreal}
  [T\T\cA_{\rmb}T,T\T\cB_{\rmb},\cC_{\rmb}T]\quad\text{ for }
T=\diag\left(\begin{bmatrix}I_{r}\\0\end{bmatrix},\begin{bmatrix}0\\I_{r}\end{bmatrix}\right).  
\end{equation}
However, a~direct application of Theorem~\ref{thm: prop redsys}
does not necessarily result into the system $[\widetilde{\cA},\widetilde{\cB},\widetilde{\cC}]=[T\T\cA_{\rmb}T,T\T\cB_{\rmb},\cC_{\rmb}T]$, but rather in a~system which is similar to $[T\T\cA_{\rmb}T,T\T\cB_{\rmb},\cC_{\rmb}T]$ under a state space transformation which preserves the property of the signature structure together with $\diag(\Sigma_1^-,\Sigma_1^+)$ being a~solution of the KYP inequalities for $[\widetilde{\cA},\widetilde{\cB},\widetilde{\cC}]$ and $[\widetilde{\cA}^\top,\widetilde{\cC}^\top,\widetilde{\cB}^\top]$. Such transformations are of block-diagonal and orthogonal type, where the sizes of the orthogonal block correspond to the multiplicities of  the respective positive real characteristic values.
As a consequence, the reduced system can be represented by 
\[[\widetilde{\cA},\widetilde{\cB},\widetilde{\cC}]=
[\widetilde{T}\T\cA_{\rmb}\widetilde{T},\widetilde{T}\T\cB_{\rmb},\cC_{\rmb}\widetilde{T}],\]
where for some orthogonal matrices $\wtU^{\pm}_j\in\R^{n_j^{\pm}\times n_j^{\pm}}$ for $j=1,\ldots,h^{\pm}$
\[\widetilde{T}=\diag(\wtU^-_1,\ldots \wtU_{h^-}^-,\wtU_{h^+}^+,\ldots,\wtU_1^+)\cdot \diag\left(\begin{bmatrix}I_{r}\\0\end{bmatrix},\begin{bmatrix}0\\I_{r}\end{bmatrix}\right).\]


\end{remark}

\end{section}


\begin{section}{Construction of second order realizations}\label{sec:constMDK}

Now, we treat the second order realization problem. In particular, we prove a necessary condition on the zero sign characteristics of the reduced system for having a~representation as a second order realization. 
An important tool hereby are the so called \emph{standard triples} \cite{GohbLanc82a,GohbLanc83,GohbLanc05,LancZaba12}. 

\begin{definition}\label{standard triple}
A~standard triple is defined as a triple $(X,Z,Y)\in\R^{n\times 2n}\times \R^{2n\times 2n}\times \R^{2n\times n}$ such that
\begin{equation*}
	\begin{bmatrix}
	X \\ XZ
	\end{bmatrix}\in\Gl_{2n}(\R)  \quad \text{ and } \quad
	\begin{bmatrix}
	X \\ XZ
	\end{bmatrix}Y=\begin{bmatrix}
	0 \\ M^{-1}
	\end{bmatrix}
\end{equation*}
for some nonsingular matrix $M\in\R^{n\times n}$.

Let $\tfL(s)=s^2 M+s D+K$ for some $M\in\Gl_n(\R)$ and $D,K\in\Rnn$. Then $\tfL(s)$ is said to be \emph{generated} by a~standard triple $(X,Z,Y)$ if $M=(XZY)^{-1}$ and the equation
\begin{equation*}
    MXZ^2+DXZ+KX=0
\end{equation*}
is fulfilled.
\end{definition}

The tuple $(X,Z)$ from a~standard triple together with a~mass matrix $M$ completely determine a~quadratic matrix polynomial with nonsingular leading term as the next result shows.

\begin{theorem}\emph{~\cite[Thm.~1]{Lanc05}}\label{repr triple}
A~standard triple $(X,Z,Y)\in\R^{n\times 2n}\times \R^{2n\times 2n}\times \R^{2n\times n}$ generates a~uniquely defined quadratic matrix polynomial. 
\end{theorem}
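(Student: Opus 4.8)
The plan is to recognize that all three coefficients of $\tfL(s)=s^2M+sD+K$ are pinned down by the defining relations, so that both existence and uniqueness reduce to solving a single linear matrix equation. First I would dispose of the leading coefficient: the generating condition already demands $M=(XZY)^{-1}$, and this is legitimate because the standard triple relation $\begin{bmatrix} X \\ XZ \end{bmatrix}Y=\begin{bmatrix} 0 \\ M^{-1} \end{bmatrix}$ forces $XZY=M^{-1}$, which is nonsingular. Hence the mass matrix of any quadratic polynomial generated by $(X,Z,Y)$ is necessarily $M=(XZY)^{-1}$, and there is no freedom left in $M$; one only needs to check that the $M$ from the standard triple relation and the $M$ from the generating relation are consistently the same matrix, which they are.

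Next I would rearrange the generating equation $MXZ^2+DXZ+KX=0$ so as to isolate the unknown coefficients. Using $KX+DXZ=\begin{bmatrix} K & D \end{bmatrix}\begin{bmatrix} X \\ XZ \end{bmatrix}$, this becomes $\begin{bmatrix} K & D \end{bmatrix}\begin{bmatrix} X \\ XZ \end{bmatrix}=-MXZ^2$. The decisive structural input is that $\begin{bmatrix} X \\ XZ \end{bmatrix}\in\Gl_{2n}(\R)$ by the very definition of a standard triple, so right-multiplying by its inverse gives $\begin{bmatrix} K & D \end{bmatrix}=-MXZ^2\begin{bmatrix} X \\ XZ \end{bmatrix}^{-1}$, an $n\times 2n$ matrix whose two $n\times n$ blocks determine $K$ and $D$ uniquely. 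For existence I would simply take $M:=(XZY)^{-1}$ and define $D,K$ by this formula, then verify that $\tfL(s)=s^2M+sD+K$ satisfies both generating conditions, which it does by construction. For uniqueness, if two polynomials $s^2M_i+sD_i+K_i$, $i=1,2$, are generated by $(X,Z,Y)$, then $M_1=M_2=(XZY)^{-1}$ and both pairs $\begin{bmatrix} K_i & D_i \end{bmatrix}$ solve the same equation with identical right-hand side $-MXZ^2$; invertibility of $\begin{bmatrix} X \\ XZ \end{bmatrix}$ then forces $K_1=K_2$ and $D_1=D_2$.

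I do not anticipate any genuine obstacle: the entire content is the observation that the generating relation is \emph{linear} in $(D,K)$ and that the coefficient matrix $\begin{bmatrix} X \\ XZ \end{bmatrix}$ occurring in it is precisely the invertible matrix built into the definition of a standard triple. The only point deserving a moment of care is confirming that the leading coefficient $M$ is unambiguously fixed by the two separate defining relations, after which the uniqueness of $D$ and $K$ is immediate from the invertibility just cited.
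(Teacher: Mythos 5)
Your proof is correct. Note that the paper supplies no argument of its own for this statement---it is quoted from \cite[Thm.~1]{Lanc05}---so strictly there is no internal proof to compare against; your argument is, however, exactly the mechanism behind that citation. Fixing $M=(XZY)^{-1}$ (consistent with the triple relation, which forces $XZY=M^{-1}$, nonsingular by definition) and rewriting the generating identity as $\begin{bmatrix}K & D\end{bmatrix}\begin{bsmallmatrix}X\\XZ\end{bsmallmatrix}=-MXZ^{2}$, the invertibility of $\begin{bsmallmatrix}X\\XZ\end{bsmallmatrix}$ built into Definition~\ref{standard triple} yields existence and uniqueness of $(K,D)$ in one stroke. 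Two points of contact with the paper are worth recording. First, right-multiplying your closed form $\begin{bmatrix}K & D\end{bmatrix}=-MXZ^{2}\begin{bsmallmatrix}X\\XZ\end{bsmallmatrix}^{-1}$ by $Y$ and using $\begin{bsmallmatrix}X\\XZ\end{bsmallmatrix}Y=\begin{bsmallmatrix}0\\M^{-1}\end{bsmallmatrix}$ gives $DM^{-1}=-MXZ^{2}Y$, i.e., $D=-M\Gamma_2 M$ with $\Gamma_2=XZ^2Y$, so your formula is equivalent to the moment formulas~\eqref{coefficient moments} stated after Theorem~\ref{resolvent form st}. Second, your argument substantiates the paper's remark immediately following the theorem: Lancaster works with $Z$ in Jordan canonical form, but nothing in the recovery of $(M,D,K)$ uses that structure---only the invertibility of $\begin{bsmallmatrix}X\\XZ\end{bsmallmatrix}$ is needed, which your proof makes explicit.
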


Actually, in~\cite{Lanc05} standard triples are considered, where $Z$ is in Jordan canonical form, but it is not made use of this additional property in the proof of the theorem above. 
Even though a~quadratic matrix polynomial is uniquely defined by a~standard triple, the reverse is not true in general. This can be seen as there is a~one-to-one correpondence between standard triples of some quadratic matrix polynomial $\tfL(s)$ and realizations of $\tfL^{-1}(s)$.

\begin{theorem}\emph{~\cite[Thm.~14.2]{LancTism85}}\label{resolvent form st}
	Let $\tfL(s)=s^2 M+s D+K$ for some $M\in\Gl_n(\R)$ and $D,K\in\Rnn$. Then, $(X,Z,Y)\in\R^{n\times 2n}\times \R^{2n\times 2n}\times \R^{2n\times n}$ is a standard triple of $\tfL(s)$, if and only if $\tfL(s)^{-1}=X(s I_{2n}-Z)^{-1}Y$.
	Moreover, $[Z,Y,X]$ is minimal as a realization of $\tfL^{-1}(s)$. In particular, if $K^{-1}$ exists, then
	$K^{-1}=-XZ^{-1}Y$.
\end{theorem}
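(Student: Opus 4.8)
The plan is to prove the two implications separately and to read off the minimality claim and the formula for $K^{-1}$ as by-products; throughout I read ``$(X,Z,Y)$ is a standard triple of $\tfL(s)$'' as: $(X,Z,Y)$ satisfies the two conditions of Definition~\ref{standard triple} and generates $\tfL(s)$, i.e.\ $XY=0$, $XZY=M^{-1}$ and $MXZ^2+DXZ+KX=0$. Since $M\in\Gl_n(\R)$, the leading coefficient of $\det\tfL(s)$ is $\det M\neq 0$, so $\tfL(s)$ is invertible over $\R(s)$ and it suffices to exhibit a right inverse. For the direction ``standard triple $\Rightarrow$ resolvent form'' I would compute directly. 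Writing $F(s):=(sI_{2n}-Z)^{-1}$, the resolvent identity gives $sF(s)=I_{2n}+ZF(s)$ and hence $s^2F(s)=sI_{2n}+Z+Z^2F(s)$. Substituting these into $\tfL(s)\,XF(s)Y=(s^2M+sD+K)XF(s)Y$ and collecting terms, the part not containing $F(s)$ equals $s\,MXY+MXZY+DXY$ and the part containing $F(s)$ equals $(MXZ^2+DXZ+KX)F(s)Y$. The standard-triple relations make the first expression equal to $MXZY=I_n$ (using $XY=0$ and $XZY=M^{-1}$) and annihilate the second, so $\tfL(s)\,XF(s)Y=I_n$, i.e.\ $\tfL^{-1}(s)=X(sI_{2n}-Z)^{-1}Y$.

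For the converse and simultaneously for the minimality statement, I would compare with the first companion triple $X_0=\begin{bmatrix}I_n&0\end{bmatrix}$, $Z_0=\left[\begin{smallmatrix}0&I_n\\-M^{-1}K&-M^{-1}D\end{smallmatrix}\right]$, $Y_0=\left[\begin{smallmatrix}0\\M^{-1}\end{smallmatrix}\right]$. A short verification shows that $(X_0,Z_0,Y_0)$ is a standard triple generating $\tfL(s)$; moreover its observability matrix $\left[\begin{smallmatrix}X_0\\X_0Z_0\end{smallmatrix}\right]=I_{2n}$, while its controllability matrix already contains the invertible block $\begin{bmatrix}Y_0&Z_0Y_0\end{bmatrix}=\left[\begin{smallmatrix}0&M^{-1}\\M^{-1}&-M^{-1}DM^{-1}\end{smallmatrix}\right]$. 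Hence $[Z_0,Y_0,X_0]$ is a minimal realization of $\tfL^{-1}(s)$ of dimension $2n$, so the McMillan degree of $\tfL^{-1}(s)$ equals $2n$, and any realization $[Z,Y,X]$ of $\tfL^{-1}(s)$ with $Z\in\R^{2n\times 2n}$ is automatically minimal, which is the asserted minimality.

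Given $\tfL^{-1}(s)=X(sI_{2n}-Z)^{-1}Y$ with $Z\in\R^{2n\times 2n}$, both $[Z,Y,X]$ and $[Z_0,Y_0,X_0]$ are minimal realizations of the same transfer function, so by the state-space isomorphism theorem there is a unique $T\in\Gl_{2n}(\R)$ with $Z=TZ_0T^{-1}$, $Y=TY_0$, $X=X_0T^{-1}$. I would then transport the defining relations along $T$: $\left[\begin{smallmatrix}X\\XZ\end{smallmatrix}\right]=\left[\begin{smallmatrix}X_0\\X_0Z_0\end{smallmatrix}\right]T^{-1}=T^{-1}$ is invertible, $\left[\begin{smallmatrix}X\\XZ\end{smallmatrix}\right]Y=\left[\begin{smallmatrix}X_0\\X_0Z_0\end{smallmatrix}\right]Y_0=\left[\begin{smallmatrix}0\\M^{-1}\end{smallmatrix}\right]$, and since $XZ^kY=X_0Z_0^kY_0$ for every $k$, also $XZY=M^{-1}$ and $MXZ^2+DXZ+KX=(MX_0Z_0^2+DX_0Z_0+KX_0)T^{-1}=0$; thus $(X,Z,Y)$ is a standard triple generating $\tfL(s)$. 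Finally, if $K$ is invertible then $\det\tfL(0)=\det K\neq 0$, so $0$ is not an eigenvalue of $Z$, and evaluating the resolvent form at $s=0$ gives $K^{-1}=\tfL^{-1}(0)=-XZ^{-1}Y$.

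The step I expect to be the main obstacle is not the forward algebra but justifying that the given realization is minimal, since this is exactly what licenses the state-space isomorphism theorem used in the converse. I resolve it by pinning down the McMillan degree of $\tfL^{-1}(s)$ as exactly $2n$ via the explicitly minimal companion realization; once minimality is secured, the converse reduces to the mechanical transport of the defining relations along $T$.
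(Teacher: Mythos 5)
Your proof is correct, but there is nothing in the paper to compare it against: Theorem~\ref{resolvent form st} is imported verbatim from \cite[Thm.~14.2]{LancTism85} and the paper supplies no proof of its own. Your argument is essentially the classical one from that literature. The forward direction via the resolvent identities $sF(s)=I_{2n}+ZF(s)$ and $s^2F(s)=sI_{2n}+Z+Z^2F(s)$ is a clean computation, and you correctly isolate the constant part $sMXY+MXZY+DXY=I_n$ and the $F(s)$-part $(MXZ^2+DXZ+KX)F(s)Y=0$ using exactly the three relations $XY=0$, $XZY=M^{-1}$, $MXZ^2+DXZ+KX=0$ encoded in Definition~\ref{standard triple}. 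Your handling of the converse is the right one and addresses the genuine obstacle: the companion triple $X_0=\begin{bmatrix}I_n&0\end{bmatrix}$, $Z_0=\begin{bsmallmatrix}0&I_n\\-M^{-1}K&-M^{-1}D\end{bsmallmatrix}$, $Y_0=\begin{bsmallmatrix}0\\M^{-1}\end{bsmallmatrix}$ is observable (observability matrix contains $I_{2n}$) and controllable (the block $\begin{bmatrix}Y_0&Z_0Y_0\end{bmatrix}$ is anti-triangular with invertible anti-diagonal blocks $M^{-1}$), which fixes the McMillan degree of $\tfL^{-1}(s)$ at $2n$ and thereby makes \emph{every} $2n$-dimensional realization minimal --- this simultaneously proves the ``Moreover'' clause and licenses the state-space isomorphism theorem; the transport of the defining relations along $T$ is then mechanical, as you say. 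One small point worth a line in a written-up version: your inference ``$\det\tfL(0)=\det K\neq0$, so $0$ is not an eigenvalue of $Z$'' implicitly uses that the eigenvalues of $Z$ are the roots of $\det\tfL(s)$; this is immediate either from the similarity $Z=TZ_0T^{-1}$ together with the companion identity $\det(sI_{2n}-Z_0)=\det\bigl(M^{-1}\tfL(s)\bigr)$, or from minimality (the poles of $\tfL^{-1}(s)$ are exactly the eigenvalues of $Z$). With that remark inserted, the evaluation $K^{-1}=\tfL^{-1}(0)=-XZ^{-1}Y$ is fully justified, and the proof is complete.
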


It can be further inferred from Theorem~\ref{repr triple} that the coefficients of $\tfL(s)$ are given by the so-called {\em moments} $\Gamma_j:=XZ^jY$, i.\,e.,
\begin{equation}\label{coefficient moments}
	M=\Gamma_1^{-1}, \quad D=-M\Gamma_2M, \quad K=-M\Gamma_3M+D\Gamma_1D.
\end{equation}
A standard triple $(X,Z,Y)$ is said to be \emph{self-adjoint} if there exists a symmetric matrix $S\in\Gl_{n}(\R)$, such that $SY=X^{\top}, \ Z^{\top}=SZS^{-1}$. The theorem above states that $[Z,Y,X]$ is the realization of a~symmetric transfer function und thus $S$ is unique. Note that if $\tfL(s)$ possesses a self-adjoint standard triple, then the moments $\Gamma_j$ are symmetric and hence, by~\eqref{coefficient moments} the coefficients of $\tfL(s)$ are symmetric as well. Thus, by Theorem~\ref{resolvent form st} together with~\eqref{symmetry of W}, if one standard triple is self-adjoint, then all standard triples of $\tfL(s)$ are self-adjoint. Moreover, in this case the pole sign characteristics of $\tfL^{-1}(s)$ are given by the sign characteristics of $(S,Z)$ for any self-adjoint standard triple $(X,Z,S^{-1}X\T)$ of $\tfL(s)$.
We need to introduce some further notation.

\begin{definition}
	Let $(S,A)\in\Rnn\times\Rnn$ be given, where $S\in\Gl_n(\R)$ is symmetric and $A$ is $S$-self-adjoint and diagonalizable over $\C$. For $\alpha<0$, we denote by $n_-(\alpha)$ $(p_-(\alpha))$ the number of eigenvalues of $(S,A)$ of negative (positive) type in $[\alpha,0)$ $((\alpha,0])$ and for $\alpha>0$, $n_+(\alpha)$ $(p_+(\alpha))$ the number of eigenvalues of $(S,A)$ of negative (positive) type in $[0,\alpha)$ $((0,\alpha])$.	
\end{definition}

Next we present a~result from \cite{LancZaba14} that gives a~necessary condition on the sign characteristics of quadratic matrix polynomials with positive definite leading and trailing coefficient. 

\begin{theorem}\emph{~\cite[Thm.~16]{LancZaba14}}\label{leading trailing signs}
	Let $(S,A)\in\R^{2n\times 2n}\times\R^{2n\times 2n}$ be given, where $S=U\T\sS_n U$ for some $U\in\Gl_{2n}(\R)$ and $A$ is $S$-self-adjoint and diagonalizable over $\C$. Let $\lambda_{\min}$ and $\lambda_{\max}$ be its minimal and maximal real eigenvalue, respectively. Then there exists $X\in\R^{n\times 2n}$ such that $X(sI_{2n}-A)^{-1}S^{-1}X\T=(s^2M+sD+K)^{-1}$ for some $M>0$, $K\geq 0$ and $D=D\T$, if and only if
	\begin{equation}\label{sign outside intervall}
	\begin{split}
	n_-(\alpha)=&p_-(\alpha) \quad \forall \, \alpha<\lambda_{\min} \ \text{ and } \
	n_{-}(\alpha)\leq p_{-}(\alpha) \quad \forall \, \alpha\in[\lambda_{\min},0), \\
	n_+(\alpha)=&p_+(\alpha) \quad \forall \, \alpha>\lambda_{\max} \ \text{ and } \
	n_+(\alpha)\geq p_+(\alpha) \quad \forall \, \alpha\in (0,\lambda_{\max}].
	\end{split}
	\end{equation}
\end{theorem}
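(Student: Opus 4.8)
The plan is to recast the statement as an inverse problem for self-adjoint quadratic matrix polynomials and then to analyse it through the inertia of the real symmetric matrix $\tfL(\alpha)$ as $\alpha$ runs along the real axis. First I would note that, by Theorem~\ref{resolvent form st}, the existence of $X\in\R^{n\times 2n}$ with $X(sI_{2n}-A)^{-1}S^{-1}X\T=\tfL(s)^{-1}$ for $\tfL(s)=s^2M+sD+K$ is equivalent to $(X,A,S^{-1}X\T)$ being a standard triple of $\tfL(s)$, and the relations $SA=A\T S$ together with $SY=X\T$ make this triple self-adjoint with respect to $S$. Hence the sign characteristics of $(S,A)$ are exactly the pole sign characteristics of $\tfL^{-1}$, and by \eqref{coefficient moments} the coefficients $M,D,K$ are moments of the triple. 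Thus the claim becomes a characterisation of those self-adjoint pairs $(S,A)$ of signature $(n,n)$ that are generated by a quadratic with $M>0$ and $K\geq0$. Since everything is invariant under congruence-similarity, I would first reduce $(S,A)$ to the canonical form of Theorem~\ref{can unsim}.

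For necessity, suppose such an $\tfL(s)$ exists and, for real $\alpha$ not a zero of $\det\tfL$, let $\nu(\alpha)$ be the number of negative eigenvalues of the symmetric matrix $\tfL(\alpha)=\alpha^2M+\alpha D+K$. Two boundary facts hold: since $M>0$ the term $\alpha^2 M$ dominates, so $\tfL(\alpha)>0$ and $\nu(\alpha)=0$ for $\alpha$ outside $[\lambda_{\min},\lambda_{\max}]$; and since $\tfL(0)=K\geq0$, the values $\nu(0^\pm)$ are pinned down. The crossing rule, that $\nu$ jumps at a real semisimple eigenvalue $\mu$ by the signed count of the sign characteristics sitting at $\mu$, follows from Proposition~\ref{prop: compute signs} read through the slopes of the eigenvalue curves of $\tfL(\alpha)$. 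Summing all jumps from $-\infty$ to $+\infty$ and using $\nu(\pm\infty)=0$ forces the global balance of positive- and negative-type real eigenvalues, which localises to the equalities $n_\pm(\alpha)=p_\pm(\alpha)$ off $[\lambda_{\min},\lambda_{\max}]$; requiring $\nu(\alpha)\geq0$ on each side of the origin, anchored by the values $\nu(0^\pm)$ coming from $K\geq0$, yields the inequalities $n_-(\alpha)\leq p_-(\alpha)$ on $[\lambda_{\min},0)$ and $n_+(\alpha)\geq p_+(\alpha)$ on $(0,\lambda_{\max}]$.

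For sufficiency I would construct $\tfL(s)$ from the canonical form. Each complex-conjugate pair $\sigma\pm\iunit\tau$ (the $\sJ_1$/$\sP_{\sigma,\tau}$ blocks) is realised by the scalar factor $s^2-2\sigma s+(\sigma^2+\tau^2)$, with $M=1>0$ and $K=\sigma^2+\tau^2>0$. Each admissible couple of real eigenvalues $\mu_1\leq\mu_2$ is realised by $(s-\mu_1)(s-\mu_2)$, whose leading coefficient is $1>0$, whose two roots carry opposite sign characteristics, and whose trailing coefficient $\mu_1\mu_2$ is nonnegative exactly when the roots lie on the same side of the origin (an eigenvalue at $0$ being matchable to either side). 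The role of the inequalities is that $n_-(\alpha)\leq p_-(\alpha)$ and $n_+(\alpha)\geq p_+(\alpha)$ are precisely the ballot-type conditions guaranteeing a nested pairing of the real spectrum into such same-sided, opposite-type couples, while the equalities ensure nothing is left unmatched. Assembling the elementary factors and transforming back via Theorem~\ref{repr triple} reproduces $(S,A)$ up to congruence-similarity and produces the required $X$.

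The main obstacle I expect is the sufficiency construction: proving that the stated inequalities are exactly the combinatorial condition for an admissible pairing to exist and, more importantly, that the \emph{assembled} polynomial has globally positive definite $M$ and positive semidefinite $K$ rather than merely blockwise, together with the correct global pole sign characteristics. A secondary but delicate point is fixing the sign conventions so that the jump of $\nu$ at a real eigenvalue agrees with the sign characteristic as defined here, including the treatment of the eigenvalue at the origin, which is responsible for the asymmetric half-open intervals in the definitions of $n_\pm$ and $p_\pm$.
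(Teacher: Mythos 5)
You should know at the outset that the paper does not prove this statement at all: it is imported verbatim from \cite[Thm.~16]{LancZaba14}, accompanied only by a remark explaining why the sign characteristics are independent of the chosen minimal realization. So your proposal cannot be matched against an in-paper argument; what you have written is a reconstruction of the proof living in the cited source. Judged on its own terms, the outline is sound and follows the natural (and, in strategy, the original) route: Theorem~\ref{resolvent form st} converts the existence of $X$ into the existence of a self-adjoint standard triple $(X,A,S^{-1}X^\top)$ of $\tfL(s)=s^2M+sD+K$; necessity then comes from tracking the negative inertia $\nu(\alpha)$ of the symmetric matrix $\tfL(\alpha)$, which vanishes off $[\lambda_{\min},\lambda_{\max}]$ because $M>0$, is constant between real eigenvalues, and jumps according to the slopes of the eigenvalue curves crossing zero; the identification of those slope signs with the paper's sign characteristic is exactly the combination of Proposition~\ref{prop: compute signs} with the classical correspondence between $\mathrm{sign}(v^\top S v)$ and $\mathrm{sign}\bigl(v^{*}\tfL'(\mu)v\bigr)$ at a semi-simple real eigenvalue, and this is indeed the one convention check you must carry out carefully. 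Sufficiency by direct sum of scalar factors $(s-\mu_1)(s-\mu_2)$ and $s^2-2\sigma s+(\sigma^2+\tau^2)$, transported back to $(S,A)$ via the uniqueness of the canonical form in Theorem~\ref{can unsim} and \eqref{coefficient moments}, is also the right construction.

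Three of your own flagged concerns deserve comment. First, the worry that the assembled $M$ and $K$ are only blockwise definite is vacuous: assembling by direct sum gives $M=I_n$ and $K$ diagonal with nonnegative entries, so global definiteness is automatic; the genuinely load-bearing open step is the combinatorial one, namely that \eqref{sign outside intervall} is equivalent (by a Hall-condition/ballot argument, which you should write out) to the existence of a matching that pairs each negative-type eigenvalue $\mu^-$ with a positive-type partner strictly to its right on the same side of the origin. Second, the half-open intervals in the definitions of $n_\pm,p_\pm$ are precisely what encodes this strictness: a single point $\mu<0$ carrying both types already violates $n_-(\mu)\le p_-(\mu)$, consistent with $(s-\mu)^2$ failing to be semi-simple, so your parenthetical unease there is resolved rather than problematic. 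Third, one detail is stated too loosely: an eigenvalue at the origin is \emph{not} matchable to either side — its type fixes the side, since positive type makes it the larger root of a pair $(\mu^-,0)$ with $\mu^-<0$ and negative type the smaller root of $(0,\mu^+)$ with $\mu^+>0$; correspondingly $\nu(0^-)$ is not pinned down by $K\ge 0$ alone but equals the number of positive-type eigenvalues at $0$, read off from the slopes of the kernel curves. With these points made precise, your sketch closes, but as written it remains a sketch whose two substantive steps (the jump formula in the paper's sign convention and the matching equivalence) are named rather than proved.
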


\begin{remark}
	In \cite[Thm.~16]{LancZaba14}, pairs $(S,A)$ are considered with a~structure similar to the canonical form of Theorem~\ref{can unsim}. However, we have seen earlier that the sign characteristics of $(S,A)$ from any self-adjoint standard triple $(X,A,S^{-1}X\T)$ of $\tfL(s)$ are exactly the pole sign characteristics of $\tfL^{-1}(s)$ and hence, do not depend on the choice of the minimal realization of $\tfL^{-1}(s)$ or the standard triple of $\tfL(s)$.
\end{remark}

We are now able to prove the main result of this section on a~necessary and sufficient condition for our reduced system possessing a second order realization with positive definite leading and trailing coefficient. Note that the proof of this theorem is constructive, i.\,e., we can infer a~method for constructing such a~second order realization. We restrict ourselves to the case where the reduced system is minimal. However, later in Section~\ref{sec:algorithm}, when we present the whole reduction procedure, minimality is not required.

\begin{theorem}\label{thm:eq cond so}
	Let a~stabilizable system $[\cA,\cB,\cC]$ be given in which
 $\cA\in\R^{2n\times 2n}$ and $\cB\in\R^{2n\times m}$ are structured as in \eqref{eq:ABstruc2} for some $G,D\in\R^{n\times n}$ with $D=D^\top\geq 0$, $G\in\Gl_n(\R)$ and $B\in\R^{n\times m}$. Suppose that the system has semi-simple zeros.
	Let further $[\widetilde{\cA},\widetilde{\cB},\widetilde{\cC}]$ be the reduced system from Theorem~\ref{thm: prop redsys} of order $2r$, where $r^+=r^-=:r$. Suppose it is minimal and its zeros are semi-simple. Then, the following are equivalent:
	\begin{enumerate}[i)]
		\item The transfer function $\tilde{\tfG}(s)$ of $[\widetilde{\cA},\widetilde{\cB},\widetilde{\cC}]$ has a second order realization of the form~\eqref{eqn:redsosys}, where $\wtK,\wtM>0$ and $\wtD=\wtD\T\in\R^{r\times r}$ has at least $r-m$ positive eigenvalues. 
		\item The real zeros of $[\widetilde{\cA},\widetilde{\cB},\widetilde{\cC}]$ are given by $\mu_1^-\leq\ldots\leq\mu_k^-$ and $\mu_1^+\leq\ldots\leq\mu_k^+<\mu_{k+1}^+=\ldots=\mu_{k+m}^+=0$ and fulfill $\mu_i^-<\mu_i^+$ for all $i=1,\ldots,k$.	
		\item The transfer function $\tilde{\tfG}(s)$ has a minimal realization $[ \cA_{\rmn},\cB_{\rmn},\cC_{\rmn}]$ structured as in~\eqref{input output normal form2} that fulfills the properties a)--c)~from Theorem~\ref{nf io} and  $\cA_{44}-\cA_{55}=\diag(\mu_1^+,\ldots,\mu_k^+)-\diag(\mu_1^-,\ldots,\mu_k^-)>0$.
	\end{enumerate}
\end{theorem}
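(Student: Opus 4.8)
The plan is to establish the cycle of implications $\text{iii)}\Rightarrow\text{i)}\Rightarrow\text{ii)}\Rightarrow\text{iii)}$, exploiting the normal form of Theorem~\ref{nf io} as the central bookkeeping device for the real zeros and their signs. The implication $\text{ii)}\Leftrightarrow\text{iii)}$ is essentially a relabeling: by Theorem~\ref{nf io}~b), a minimal realization in the form~\eqref{input output normal form2} has $\cA_{44}=\diag(\mu_1^+,\ldots,\mu_k^+)$ and $\cA_{55}=\diag(\mu_1^-,\ldots,\mu_k^-)$ collecting precisely the negative real zeros of positive and negative type, while the $m$-fold zero at the origin (from the first and last blocks, cf.\ Lemma~\ref{inverse real lemma}) supplies $\mu_{k+1}^+=\ldots=\mu_{k+m}^+=0$. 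The inequality $\cA_{44}-\cA_{55}>0$ is the diagonal entrywise statement $\mu_i^+>\mu_i^-$, so the two conditions are literally the same after matching notation; I would just verify that the ordering conventions in b) make the correspondence $i\leftrightarrow i$ the correct pairing.

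For the substantive direction $\text{i)}\Rightarrow\text{ii)}$, I would invoke Theorem~\ref{leading trailing signs} (the result of \cite{LancZaba14}). Given a second order realization $\tilde{\tfG}(s)=s\wtB^\top(s^2\wtM+s\wtD+\wtK)^{-1}\wtB$ with $\wtM,\wtK>0$, I would first pass to the associated quadratic matrix polynomial $\tfL(s)=s^2\wtM+s\wtD+\wtK$ and a self-adjoint standard triple $(X,Z,S^{-1}X^\top)$ generating it, so that the pole sign characteristics of $\tfL^{-1}(s)$ equal the sign characteristics of $(S,Z)$. The zeros of $[\widetilde{\cA},\widetilde{\cB},\widetilde{\cC}]$ are the eigenvalues of $Z$ (the nonzero ones) together with the origin, and the zero sign characteristics of the reduced system translate into the sign characteristics of $(S,Z)$. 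Theorem~\ref{leading trailing signs} then forces the interlacing counts~\eqref{sign outside intervall}: the conditions $n_-(\alpha)=p_-(\alpha)$ for $\alpha<\lambda_{\min}$ and $n_-(\alpha)\le p_-(\alpha)$ on $[\lambda_{\min},0)$ are exactly what rule out a negative-type zero lying strictly to the left of its positive-type counterpart, giving $\mu_i^-<\mu_i^+$. I would convert the counting inequalities into the pairwise inequalities by an induction sweeping $\alpha$ from $-\infty$ upward, tracking how each crossing of a $\mu_i^\pm$ changes $n_-$ and $p_-$.

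For $\text{iii)}\Rightarrow\text{i)}$ I would reverse the construction, which is where the proof becomes genuinely constructive. Starting from the normal form~\eqref{input output normal form2} with $\cA_{44}-\cA_{55}>0$, the goal is to build $(S,Z)$ with $S$ congruent to $\sS_r$ whose sign characteristics satisfy~\eqref{sign outside intervall}, and then apply the converse half of Theorem~\ref{leading trailing signs} to produce $\wtM>0$, $\wtK\ge0$, $\wtD=\wtD^\top$; the strict positivity $\wtK>0$ and the eigenvalue count of $\wtD$ would follow from minimality and $\rank\wtB=m$ via the moment relations~\eqref{coefficient moments} and the assignment of $m$ zeros to the origin. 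The key quantitative input is that the strict inequalities $\mu_i^->\mu_i^+$ are false only on a measure-zero configuration, so generically the signs alternate correctly; the hypothesis iii) is exactly what promotes this to a rigorous guarantee.

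The main obstacle I anticipate is the careful matching of three different notions of ``sign'': the zero sign characteristics of $[\widetilde{\cA},\widetilde{\cB},\widetilde{\cC}]$ as in Definition~\ref{def:polezerosignchar}, the pole sign characteristics of the inverse $\tfL^{-1}(s)$, and the sign characteristics of the pair $(S,Z)$ entering Theorem~\ref{leading trailing signs}. These must be shown to coincide, and the bridge is the observation (recorded after Theorem~\ref{resolvent form st}) that a self-adjoint standard triple of $\tfL(s)$ is exactly a minimal symmetric realization of $\tfL^{-1}(s)$, together with the invariance~\eqref{eq:change in int symmetry} of the internal symmetry $S$ under the change of realization. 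Getting the sign conventions consistent across the transfer-function inversion $\tfG(s)\leftrightarrow\tfL(s)^{-1}$ — where a factor of $s$ and the role of $\wtB$ versus $X$ intervene — is the delicate step; once that dictionary is fixed, both nontrivial implications reduce to direct applications of Theorem~\ref{leading trailing signs} in its two directions.
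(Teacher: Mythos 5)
There is a genuine gap, and it sits at the heart of your i)$\Rightarrow$ii) step: you apply Theorem~\ref{leading trailing signs} to $\tfL(s)=s^2\wtM+s\wtD+\wtK$ and claim that ``the zeros of $[\widetilde{\cA},\widetilde{\cB},\widetilde{\cC}]$ are the eigenvalues of $Z$.'' They are not: $Z$ linearizes $\tfL(s)$, so its spectrum is the set of \emph{poles} of $\tilde{\tfG}(s)=s\wtB\T\tfL(s)^{-1}\wtB$, and sign constraints on the pole pair $(S,Z)$ say nothing about condition~ii), which concerns the zero sign characteristics. The paper's proof of i)$\Rightarrow$iii) avoids this by first passing, via Lemma~\ref{inverse real lemma}, to the form~\eqref{zero char form}, in which the nonzero real zeros are the eigenvalues of the zero-dynamics matrix $\hA_{\rms}=\begin{bsmallmatrix}0&G_{13}\T\\-G_{13}&-D_{11}\end{bsmallmatrix}$, and then verifying by direct computation that $(X\hA_{\rms}^{-1},\hA_{\rms},\sS_{r-m}X\T)$ with $X=\begin{bmatrix}0&I_{r-m}\end{bmatrix}$ is a self-adjoint standard triple of the \emph{compressed} pencil $\mu^2 I_{r-m}+\mu D_{11}+G_{13}G_{13}\T$, whose leading coefficient is $I_{r-m}>0$ and trailing coefficient $G_{13}G_{13}\T\geq 0$. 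Only after this compression does Theorem~\ref{leading trailing signs} apply, and the sign characteristics must then still be transported to $(-\sS_k,\diag(\cA_{44},\cA_{55}))$ via the symmetry-preserving transformation satisfying~\eqref{eq:change in int symmetry} to conclude $\cA_{55}<\cA_{44}$. Without identifying the correct (zero-dynamics) quadratic, your reduction to \cite{LancZaba14} fails.

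Two further steps are also underestimated. First, ii)$\Leftrightarrow$iii) is not a ``relabeling'': statement~iii) asserts the \emph{existence} of a minimal realization in the form~\eqref{input output normal form2}, and Theorem~\ref{nf io} only applies to systems already structured as in~\eqref{eq:ABstruc2} --- which the reduced system is not a priori (that is essentially statement~i)). The paper's ii)$\Rightarrow$iii) identifies $[\widetilde{\cA},\widetilde{\cB},\widetilde{\cC}]$ with a truncation of the balanced form via Remark~\ref{rem:struc redsysso}, proves asymptotic stability of the reduced zero dynamics because it arises by positive real balanced truncation of the stable minimal system~\eqref{eq:factor of inverse of Gr} (the Section~III discussion in~\cite{HarsJonc83}), and only then invokes Corollary~\ref{right symmetry form} with a $\hT$ satisfying $\hT\T\sS_p\hT=\sS_p$. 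Second, in iii)$\Rightarrow$i) the ``converse half'' of Theorem~\ref{leading trailing signs} would at best reproduce the zero-dynamics quadratic, not a second order realization of the full transfer function; the paper instead constructs explicit blocks $T_i=\begin{bsmallmatrix}a_i&b_i\\b_i&a_i\end{bsmallmatrix}$ with $a_i=\sqrt{\mu_i^-/(\mu_i^--\mu_i^+)}$ and $b_i=\sqrt{\mu_i^+/(\mu_i^--\mu_i^+)}$ --- real precisely because $\mu_i^-<\mu_i^+<0$ --- which annihilate the diagonal blocks, produce the form~\eqref{eq:redsys1}, and yield $\wtM=I_r$, $\wtK=\wtG\wtG\T$ with the inertia count for $\wtD$ from Sylvester's law. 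Your closing appeal to genericity/measure-zero configurations plays no role in a correct argument (and the inequality you state there, $\mu_i^->\mu_i^+$, is reversed); the hypothesis $\mu_i^-<\mu_i^+$ enters deterministically, through the realness of $a_i,b_i$.
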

\begin{proof}
	$ii) \Rightarrow iii)$: Let $\tfG(s)$ be the transfer function of $[\cA,\cB,\cC]$ and  $[\widetilde{\cA}_{\rmb},\widetilde{\cB}_{\rmb},\widetilde{\cC}_{\rmb}]$ be its the positive real balanced realization given by Theorem~\ref{prbt second order} of the form~\eqref{balanced so2}. Due to Remark~\ref{rem:struc redsysso} we can assume w.\,l.\,o.\,g.\  that $[\widetilde{\cA},\widetilde{\cB},\widetilde{\cC}]=[T\T\cA_{\rmb}T,T\T\cB_{\rmb},\cC_{\rmb}T]$  for 
    $T=\diag\left(\begin{bsmallmatrix}I_{r}\\0\end{bsmallmatrix},\begin{bsmallmatrix}0\\I_{r}\end{bsmallmatrix}\right)$. Let 
    \[[\widetilde{\cA}_{\rmz},\widetilde{\cB}_{\rmz},\widetilde{\cC}_{\rmz},\cD_{\rmz}]=[\wtT\T\cA_{\rmz}\wtT,\wtT\T\cB_{\rmz},\cC_{\rmz}\wtT,\cD_{\rmz}] \] 
    for $\wtT:=\diag\left(\begin{bsmallmatrix}0\\I_{r-m-\ell}\end{bsmallmatrix},\begin{bsmallmatrix}I_{r-m-\ell}\\0\end{bsmallmatrix}\right)$ and $[\cA_{\rmz},\cB_{\rmz},\cC_{\rmz},\cD_{\rmz}]$ be as in~\eqref{eq:factor of inverse of Gr}. 
    Moreover, the eigenvalues of $\widetilde{\cA}_{\rmz}$ are zeros of $[\widetilde{\cA},\widetilde{\cB},\widetilde{\cC}]$ and the real nonzero zeros of $[\widetilde{\cA},\widetilde{\cB},\widetilde{\cC}]$ are eigenvalues of $\widetilde{\cA}_{\rmz}$. 
	Since $[\widetilde{\cA},\widetilde{\cB},\widetilde{\cC}]$ is received by positive real balanced truncation of the minimal and asymptotically stable system $[\cA_{\rmz},\cB_{\rmz},\cC_{\rmz},\cD_{\rmz}]$ from~\eqref{eq:factor of inverse of Gr}, the discussion in Section~III of~\cite{HarsJonc83} provides that the system $[\widetilde{\cA},\widetilde{\cB},\widetilde{\cC}]$ is asymptotically stable as well. Hence we can apply  Corollary~\ref{right symmetry form} to find some~$\hT\in\Gl_{2p}(\R)$ such that
	\begin{equation*}
		\hT^{-1}\widetilde{\cA}_{\rmz}\hT=\begin{bmatrix}
		0 & 0 & \cV \\
		0 & \Lambda  & 0 \\
		-\cV & 0 & \cE \\
		\end{bmatrix} \ \text{ and } \ \hT\T \sS_{p} \hT=\sS_{p},
	\end{equation*}
	where $\Lambda=\diag(\mu^+_1,\ldots,\mu^+_k,\mu^-_1,\ldots,\mu^-_k)$ and $\cE=\diag(\eta_1,\ldots,\eta_c)$ for $c:=p-k$, $\cV=\diag(\nu_1,\ldots,\nu_c)$, with $\eta_i< 0$ and $\nu_i\in\R$ for $i=1,\ldots,c$. Applying the state space transformation $\brT:=\diag(I_{m+\ell},\hT,I_{m+\ell})$ to $[\widetilde{\cA},\widetilde{\cB},\widetilde{\cC}]$ then gives a minimal realization $[\cA_{\rmn},\cB_{\rmn},\cC_{\rmn}]$ of $\tilde{\tfG}(s)$ structured as in Theorem~\ref{nf io} and which fulfills the properties a)--c)~of this theorem. Now the zero sign characteristics of $[\widetilde{\cA},\widetilde{\cB},\widetilde{\cC}]$ are given by the sign characteristics of $(\diag(I_m,-\sS_{2k}),\diag(0,\Lambda))$. Hence, by assumption we get that  $\diag(\mu_1^+,\ldots,\mu_k^+)-\diag(\mu_1^-,\ldots,\mu_k^-)>0$. 
	
	$iii) \Rightarrow ii)$: This follows since $\mu_1^{+},\ldots,\mu_k^{+}$ and $\mu_1^{-},\ldots,\mu_k^{-}$ are respectively the negative zeros of positive and negative type of the minimal realization $[\cA_{\rmn},\cB_{\rmn},\cC_{\rmn}]$ of $\tilde{\tfG}(s)$ which is structured as in~\eqref{input output normal form2} and $\tilde{\tfG}(0)=\cC_{\rmn}\cA_{\rmn}^{-1}\cB_{\rmn}=0$. 
	
	$iii) \Rightarrow i)$: Suppose we are given a minimal realization $[\cA_{\rmn},\cB_{\rmn},\cC_{\rmn}]$ structured as in Theorem~\ref{nf io} and such that $\diag(\mu_1^+,\ldots,\mu_k^+)-\diag(\mu_1^-,\ldots,\mu_k^-)>0$.
	For $i=1,\ldots,k$ we set
	\begin{equation*}
		a_i:= \sqrt{\tfrac{\mu_i^-}{\mu_i^--\mu_i^+}}\in\R\backslash\{0\}, \quad b_i:=\sqrt{\tfrac{\mu_i^+}{\mu_i^--\mu_i^+}}\in\R\backslash\{0\},\quad
		T_i=\begin{bmatrix}
		a_i & b_i \\ b_i & a_i
	\end{bmatrix}\in\Gl_2(\R).
	\end{equation*}
	Straightforward calculations give $T_i^{-1}=\sS_2 T_i\sS_2$ and
	\begin{equation}\label{eq:Tireal}
		T_i^{-1}\diag(\mu_i^+,\mu_i^-)T_i=\begin{bmatrix}
		0 & * \\ * & \tfrac{(\mu_i^-)^2-(\mu_i^+)^2}{\mu_i^--\mu_i^+}
		\end{bmatrix}.
	\end{equation}
	Note that since $\mu_i^{\pm}<0$ for all $i=1,\ldots,k$, the lower right entry of the matrix on the right-hand side is negative. We set $\widetilde{T}:=\begin{bsmallmatrix}
	A & B \\ B & A
	\end{bsmallmatrix}$, where $A:=\diag(a_1,\ldots,a_k)$ and $B:=\diag(b_1,\ldots,b_k)$ and $T:=\diag(I_{r-k},\widetilde{T},I_{r-k})$. Interchanging some rows and columns, we arrive at a minimal realization $[\cA,\cB,\cC]$ of $\tilde{\tfG}(s)$ of the form~\eqref{eq:redsys1}, where a second order realization can be derived directly with $\wtM=I_r$, $\wtK=\wtG\wtG\T$ and $\wtD=\wtD\T$. Note that $\wtD$ has the block form $\wtD=\begin{bsmallmatrix}
	\wtD_{11} & \wtD_{12} \\ \wtD_{12}\T & \wtD_{22}
	\end{bsmallmatrix}$, where $\wtD_{11}\in \R^{r-m\times r-m}$ and $\wtD_{22}\in\R^{m\times m}$ are positive semidefinite and diagonal matrices. Hence, Sylvester's law of inertia \cite[Thm.~4.5.8]{HornJohn12} implies that $\wtD$ has at least $r-m$ non-negative eigenvalues.
	
	$i) \Rightarrow iii)$: Suppose $\tilde{\tfG}(s)$ has a minimal second order realization of the form~\eqref{eqn:redsosys}, where $\wtM\in\R^{r\times r}$. By Lemma~\ref{inverse real lemma}, $\tilde{\tfG}(s)$ then also has a minimal realization $[\cA_{\rms},\cB_{\rms},\cC_{\rms}]$ of the form~\eqref{zero char form}, where its real and nonzero zeros coincide with the real eigenvalues of $\hA_{\rms}$
	and the zero sign characteristics of $[\cA_{\rms},\cB_{\rms},\cC_{\rms}]$ coincide with the sign characteristics of $(\diag(I_m,-\sS_{r-m}),\diag(0,\hA_{\rms}))$. Since, as mentioned in the first part, all zeros of $[\widetilde{\cA},\widetilde{\cB},\widetilde{\cC}]$ have nonpositive real part, we can apply Theorem~\ref{nf io} to obtain a~minimal realization $[\cA_{\rmn},\cB_{\rmn},\cC_{\rmn}]$ of $\tilde{\tfG}(s)$ of the form~\eqref{input output normal form2}.
	Recall that the transformation $T\in\Gl_{2r}(\R)$ we use to transform $[\cA_{\rms},\cB_{\rms},\cC_{\rms}]$ into $[\cA_{\rmn},\cB_{\rmn},\cC_{\rmn}]$ has the block structure $T=\diag(I_m,\hT,I_m)$, where $\hT \in\Gl_{2(r-m)}(\R)$ has to fulfill~\eqref{eq:change in int symmetry}, i.\,e., $\hT\T \sS_{r-m} \hT=\sS_{r-m}$. This implies that the sign characteristics of $(-\sS_{r-m},\hA_{\rms})$ coincide with the sign characteristics of $(-\sS_k,\diag(\cA_{44},\cA_{55}))$, with $\cA_{44},\cA_{55}$ from~\eqref{input output normal form2}. Setting $X:=\begin{bmatrix}
	0 & I_{r-m}
	\end{bmatrix}$, with $\hA_{\rms}=\begin{bsmallmatrix}
	0 & G_{13}\T \\ -G_{13} & -D_{11}
	\end{bsmallmatrix}$ as in Lemma~\ref{inverse real lemma} we obtain
	\begin{equation*}
		\begin{bmatrix}
		X\hA_{\rms}^{-1} \\ X\hA_{\rms}^{-1}\hA_{\rms}
		\end{bmatrix}=\begin{bmatrix}
		-G_{13}^{-\top} & 0 \\ 0 & I_{r-m}
		\end{bmatrix} \quad \text{and} \quad \begin{bmatrix}
		X\hA_{\rms}^{-1} \\ X\hA_{\rms}^{-1}\hA_{\rms}
		\end{bmatrix}\sS_{r-m}X\T=\begin{bmatrix}
		0 \\ I_{m-r}
		\end{bmatrix}.
	\end{equation*}
	Thus, using~\eqref{coefficient moments} and Theorem~\ref{resolvent form st}, $(X\hA_{\rms}^{-1},\hA_{\rms},\sS_{r-m}X\T)$ is a standard triple for
	\begin{equation*}
		\mu^2I_{r-m}-\mu X\hA_{\rms}\sS_{r-m}X\T-(X(\hA_{\rms}^{-1})^2\sS_{r-m}X\T)^{-1}=\mu^2I_{r-m}+\mu D_{11}+G_{13} G_{13}\T.
	\end{equation*}
	Moreover, the standard triple is $S$-self-adjoint for $S:=\sS_{r-m}\hA_{\rms}^{-1}$, since
	\begin{equation*}
		\begin{split}
			 S\hA_{\rms}S^{-1}=&\sS_{r-m}\hA_{\rms}^{-1}\hA_{\rms}\hA_{\rms}\sS_{r-m}=\sS_{r-m}\hA_{\rms}\sS_{r-m}=\hA_{\rms}^{\top}, \\ (X\hA_{\rms}^{-1})\T=&\hA_{\rms}^{-\top}X\T=\hA_{\rms}^{-\top}\sS_{r-m}\sS_{r-m}X\T=\sS_{r-m}\hA_{\rms}^{-1}\sS_{r-m}X\T=S\sS_{r-m}X\T.
		\end{split}
	\end{equation*}
	 Now Theorem~\ref{leading trailing signs} implies that for this standard triple, $n_{-}(\alpha)\leq p_{-}(\alpha)$ for all $\alpha\in[\mu_{\min},0)$. Further, we get that
	\begin{equation*}
		\hT\T \hA_{\rms}^{-\top}\hT^{-\top}=\begin{bmatrix}
		-\cA_{36}^{-\top}\cA_{66}\cA_{36}^{-1} & 0 & 0 & \cA_{36}^{-\top}  \\
		0 & \cA_{44}^{-1} & 0 & 0 \\
		0 & 0 & \cA_{55}^{-1} & 0 \\
		-\cA_{36}^{-1} & 0 & 0 & 0
		\end{bmatrix}.
	\end{equation*}
	The sign characteristics only depend on the real eigenvalues and thus, $(\sS_{r-m}\hA_{\rms}^{-1},\hA_{\rms})$ and  $(\hT\T\sS_{r-m}\hT\hT^{-1}\hA_{\rms}^{-1}\hT,\hT^{-1}\hA_{\rms}\hT)$ have the same sign characteristics as the tuple $(\diag(\cA_{44}^{-1},\cA_{55}^{-1})\sS_k,\diag(\cA_{44},\cA_{55}))$. Recall that $\cA_{44}$ and $\cA_{55}$ are negative definite. Hence, the sign characteristics of the latter coincide with the sign characteristics of $(-\sS_k,\diag(\cA_{44},\cA_{55}))$. Therefore, we get that $n_{-}(\alpha)\leq p_{-}(\alpha)$ for all $\alpha\in[\mu_{\min},0)$ also holds for $(-\sS_{k},\diag(\cA_{44},\cA_{55}))$, which means that $\cA_{55}<\cA_{44}$.
\end{proof}

We close this section with a~discussion on the treatment of the necessary condition that is needed in order to recover a~second order realization which we have learned previously. 

\begin{remark}[Handling of the necessary condition]
		Unfortunately, the condition $\diag(\mu_1^+,\ldots,\mu_k^+)-\diag(\mu_1^-,\ldots,\mu_k^-)$ on the zeros of the reduced system is not fulfilled in general. 
		Moreover, we cannot guarantee to establish this property by small perturbations, since the sign characteristics are stable in the following sense. Consider a pair $(H,A)\in\Rnn\times \Rnn$, where $H$ is symmetric and nonsinuglar and $A$ is $H$-self-adjoint and diagonalizable over $\C$. Then for every simple real eigenvalue $\lambda$, there exist structure preserving neighborhoods $U_A$ of $A$, $U_H$ of $H$ and $U_{\lambda}$ of $\lambda$, such that if $A_1\in U_A$ is $H_1$-self-adjoint for some $H_1\in U_H$, then there is exactly one real simple eigenvalue $\lambda_1$ of $A_1$ in $U_{\lambda}$ and it holds that it has the same sign as $\lambda$, see~\cite[Sec.~III.5.1, Thm.~5.1]{GohbLanc83}. Due to these facts, we have decided that in our reduction method to add blocks instead of relying on perturbations of the system matrices, whenever it is necessary, see Section~\ref{sec:algorithm}.
\end{remark}


\begin{section}{Positive real balanced truncation for overdamped systems - the exceptional case}\label{sec:overdamped}
	Here we consider second order systems of the form~\eqref{eqn:sosys} that additionally fulfill that $M$, $D$ and $K$ are all symmetric and positive definite, and the so called \emph{overdamping condition}
	\begin{equation}\label{overdamping condition}
	(v^{*}Dv)^2>4(v^{*}Mv)(v^{*}Kv) \ \text{for all} \ v\in\Cn
	\end{equation}
	is fulfilled. It is known that~the overdamping condition implies that all zeros of $\det(s^2M+s D+K)$ are real, see \cite{Duff55}; it therefore generalizes the discriminant condition to the matrix-valued case. By forming a~self-adjoint standard triple $(X,A,S^{-1}X\T)$ of $\tfL(s)=s^2M+s D+K$, we can define the sign characteristics of $\tfL(s)$ to be those of $(S,A)$ \cite{GohbLanc80}.
	 The sign characteristics of overdamped systems have a~certain structure, which is summarized in the following result.
	
	\begin{theorem}\cite[Thm~3.6]{MahaTiss12}\label{overdamping and signs}
		Let $(X,A,S^{-1}X\T)$ form a self-adjoint standard triple of some matrix polynomial $s^2M+sD+K$, where $M,D,K\in\Rnn$ are symmetric. Then the following are equivalent:
		\begin{enumerate}[i)]
			\item All eigenvalues of $(S,A)$ are real and negative and every eigenvalue of negative type is smaller than every eigenvalue of positive type. In detail, they fulfill $\lambda_{1}^-\leq \ldots \leq \lambda_n^-<\lambda_1^+\leq \ldots \leq \lambda^+_{n}<0$.
			\item It holds that $M,D,K>0$ and~\eqref{overdamping condition}.
		\end{enumerate}
	\end{theorem}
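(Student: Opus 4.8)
The plan is to pass freely between three equivalent viewpoints on the pencil: the sign characteristic of $(S,A)$, the sign of the scalar form $v\T\tfL'(\lambda_0)v$ at a real eigenvalue $\lambda_0$, and the definiteness of $\tfL(\gamma)$ for a real shift $\gamma$. First I would record the resolvent representation: by Theorem~\ref{resolvent form st} and self-adjointness, $\tfL(s)^{-1}=X(sI_{2n}-A)^{-1}S^{-1}X\T$, and since $A$ is $S$-self-adjoint and diagonalizable we may diagonalize $A=\diag(\lambda_1,\ldots,\lambda_{2n})$, $S=\diag(\varepsilon_1,\ldots,\varepsilon_{2n})$ with $\varepsilon_i\in\{\pm1\}$ the sign characteristics. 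Writing $x_i$ for the columns of $X$ in this basis (real once all $\lambda_i\in\R$) yields
\[
\tfL(\gamma)^{-1}=\sum_{i=1}^{2n}\frac{\varepsilon_i}{\gamma-\lambda_i}\,x_ix_i\T .
\]
The second ingredient is the classical identity tying the sign characteristic to the polynomial: for a simple real eigenvalue $\lambda_0$ with eigenvector $v$ (so $\tfL(\lambda_0)v=0$), the type of $\lambda_0$ equals $\operatorname{sign}\big(v\T\tfL'(\lambda_0)v\big)=\operatorname{sign}\big(v\T(2\lambda_0 M+D)v\big)$, the analytic analogue of Proposition~\ref{prop: compute signs}, which I would cite from \cite{GohbLanc83}. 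A short computation with $f_v(s):=v\T\tfL(s)v=(v\T Mv)s^2+(v\T Dv)s+(v\T Kv)$ then shows that its larger root carries $v\T\tfL'(\cdot)v=+\sqrt{\smash[b]{\mathrm{disc}}}>0$ and its smaller root $-\sqrt{\smash[b]{\mathrm{disc}}}<0$; so \emph{upper roots are of positive and lower roots of negative type}.

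For ii)$\Rightarrow$i) I would argue as follows. If $M,D,K>0$ and \eqref{overdamping condition} holds, then for each eigenpair $(\lambda,v)$ the coefficients of $f_v$ are strictly positive, so $f_v$ has two negative real roots; by \cite{Duff55} every zero of $\det\tfL(s)$ is real, so all eigenvalues are real and negative. Each eigenvalue is one of the roots $p_\pm(v)$ of its eigenvector, so by the computation above the negative-type eigenvalues are lower roots and the positive-type ones upper roots. To separate the two groups I would invoke Duffin's characterization of overdamping: \eqref{overdamping condition} together with $M>0$ is equivalent to the existence of a real $\gamma$ with $\tfL(\gamma)<0$, because $v\T\tfL(\gamma)v<0$ for all $v\neq 0$ says precisely $p_-(v)<\gamma<p_+(v)$, i.e.\ $\max_v p_-(v)<\gamma<\min_v p_+(v)$. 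Consequently every negative-type eigenvalue $\mu_i^-=p_-(v_i)\le\max_v p_-(v)<\min_v p_+(v)\le p_+(v_j)=\mu_j^+$, giving the ordering $\lambda_1^-\le\cdots\le\lambda_n^-<\lambda_1^+\le\cdots\le\lambda_n^+<0$ of i).

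For i)$\Rightarrow$ii) I would run the bridge in reverse. Given the ordering, pick $\gamma\in(\lambda_n^-,\lambda_1^+)$. For positive-type indices ($\varepsilon_i=1$, $\lambda_i>\gamma$) and negative-type indices ($\varepsilon_i=-1$, $\lambda_i<\gamma$) alike the coefficient $\varepsilon_i/(\gamma-\lambda_i)$ is strictly negative, so every rank-one term in the resolvent sum is negative semidefinite; since $X$ has full row rank the $x_i$ span $\R^n$, whence $\tfL(\gamma)^{-1}<0$ and thus $\tfL(\gamma)<0$. This immediately gives \eqref{overdamping condition}, since each upward parabola $f_v$ is strictly negative at $\gamma$ and hence has positive discriminant. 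To obtain definiteness of the coefficients I would track the inertia of $\tfL(s)$ along the real axis: starting from the $n$ negative eigenvalues of $\tfL(\gamma)$, the fact that an eigenvalue curve of $\tfL(s)$ crosses zero with slope $v\T\tfL'(\lambda_0)v$ shows that moving $s$ upward through the $n$ positive-type eigenvalues removes all negative eigenvalues, so $\tfL(s)>0$ for $s\ge 0$; in particular $K=\tfL(0)>0$, and as $s\to\infty$ we get $M\ge 0$, which upgrades to $M>0$ since $M=\Gamma_1^{-1}$ is invertible by \eqref{coefficient moments}. Finally $\tfL(s)>0$ for $s\ge 0$ forces $p_\pm(v)<0$, so $-(v\T Dv)/(v\T Mv)=p_-(v)+p_+(v)<0$ with $v\T Mv>0$ yields $v\T Dv>0$ for all $v\neq 0$, i.e.\ $D>0$.

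The main obstacle I anticipate is the separation step, namely upgrading the pointwise inequality \eqref{overdamping condition} to a single shift $\gamma$ with $\tfL(\gamma)<0$ (equivalently $\max_v p_-(v)<\min_v p_+(v)$); this is the genuine content of Duffin's minimax theory and not a formal manipulation. The second delicate point is the slope identity $v\T\tfL'(\lambda_0)v$ and the inertia-crossing count, which must be handled via the signature of $\tfL'(\lambda_0)$ restricted to $\ker\tfL(\lambda_0)$ when eigenvalues are not simple; both rest on the indefinite-linear-algebra machinery of \cite{GohbLanc83} recalled in Section~\ref{sec:prelim indef lina}.
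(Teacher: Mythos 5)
The first thing to note is that the paper itself contains no proof of this statement: it is imported verbatim from \cite{MahaTiss12} (Al-Ammari/Tisseur), so your attempt can only be measured against the standard literature argument rather than an in-paper one. Measured that way, your proposal is essentially a correct reconstruction of that argument: the identification of the sign characteristic at a real eigenvalue with $\operatorname{sign}\bigl(v\T\tfL'(\lambda_0)v\bigr)$ (the quadratic-eigenvector analogue of Proposition~\ref{prop: compute signs}, valid since for the linearization eigenvector $w=\bigl(v\T,\lambda_0 v\T\bigr)\T$ one has $w\T S w=v\T(2\lambda_0 M+D)v$), the observation that upper roots of $f_v$ carry positive and lower roots negative type, the resolvent expansion $\tfL(\gamma)^{-1}=\sum_i \varepsilon_i(\gamma-\lambda_i)^{-1}x_ix_i\T$ via Theorem~\ref{resolvent form st}, and the inertia count as $s$ sweeps through the positive-type eigenvalues are all exactly the ingredients used in \cite{MahaTiss12} and in Gohberg--Lancaster--Rodman. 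You are also right that the separation step $\max_v p_-(v)<\min_v p_+(v)$, equivalently $\tfL(\gamma)<0$ for some real $\gamma$, is the genuine content of the implication ii)$\Rightarrow$i); it is Duffin's minimax theorem and cannot be obtained by a formal manipulation, so citing it is legitimate (though it means that direction of your proof is, at its core, a citation — which is no worse than what the paper does).

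Two small patches are needed. First, in i)$\Rightarrow$ii) you invoke ``each upward parabola $f_v$'' to deduce \eqref{overdamping condition} before you have established $M>0$; the order should be: $\tfL(\gamma)<0$ via the resolvent sum (using that $XT$ has full row rank $n$, which follows from the invertibility of $\begin{bsmallmatrix}X\\XA\end{bsmallmatrix}$), then the inertia-tracking to get $\tfL(s)>0$ for $s>\lambda_n^+$, hence $K=\tfL(0)>0$ and $M>0$ (the limit argument plus invertibility of $M=\Gamma_1^{-1}$), and only then discriminant positivity and $D>0$. Second, in ii)$\Rightarrow$i) you never justify that there are exactly $n$ eigenvalues of each type, which the ordering in i) presupposes; this follows because the unique symmetric $S$ from \eqref{symmetry of W} is congruent to $\begin{bsmallmatrix}D&M\\M&0\end{bsmallmatrix}$, which (with $M$ invertible) is congruent to $\begin{bsmallmatrix}0&M\\M&0\end{bsmallmatrix}$ and hence has signature zero, forcing $n$ signs of each kind. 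With these repairs, and with your stated handling of multiple eigenvalues via the signature of $\tfL'(\lambda_0)$ on $\ker\tfL(\lambda_0)$ (the strict gap guarantees every eigenvalue is of definite type, hence semi-simple, so the crossing count is clean), the argument is complete.
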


	Noticing that the distribution of the real eigenvalues and their signs are system invariants, we obtain the following result.
	
	\begin{corollary}\label{overdamping and real}
		Let a~second order system be given of the form~\eqref{eqn:sosys}, with $M,D,K>0$, which fulfills the condition~\eqref{overdamping condition} and let $\tfG(s)$ be its transfer function. Then every second order realization of the form~\eqref{eqn:sosys} with symmetric $\tM,\tD,\tK\in\Rnn$ fulfills $\tM,\tD,\tK>0$ and~\eqref{overdamping condition}. In this case, for the first order representation $[\cA,\cB,\cC]$ of the form~\eqref{eq:linsys1}, the eigenvalues of $(\sS_n,\cA)$ fulfill $\lambda_{1}^-\leq \ldots \leq \lambda_n^-<\lambda_1^+\leq \ldots \leq \lambda^+_{n}<0$.
	\end{corollary}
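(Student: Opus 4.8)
The plan is to reduce everything to Theorem~\ref{overdamping and signs}, which characterizes overdamping purely through the sign characteristics of the associated quadratic matrix polynomial, and then to exploit that these sign characteristics are invariants of the transfer function. First I would apply the direction $ii)\Rightarrow i)$ of Theorem~\ref{overdamping and signs} to the given realization: since $\tM=M$, $\tD=D$, $\tK=K$ are positive definite and \eqref{overdamping condition} holds, any self-adjoint standard triple $(X,A,S^{-1}X\T)$ of $\tfL(s)=s^2M+sD+K$ has all its eigenvalues real and negative, with every eigenvalue of negative type strictly below every eigenvalue of positive type.

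Next I would transfer this information to the first order representation \eqref{eq:linsys1}. Using $M=HH\T$ and $K=GG\T$ one verifies that $\tfL(s)^{-1}=X(sI_{2n}-\cA)^{-1}Y$ with $X=\begin{bmatrix} H\T G^{-\top} & 0\end{bmatrix}$ and $Y=\begin{bmatrix} 0 \\ H^{-\top}H^{-1}\end{bmatrix}$, so by Theorem~\ref{resolvent form st} the matrix $\cA$ underlies a self-adjoint standard triple of $\tfL$; in particular the eigenvalues of $\cA$ are exactly the roots of $\det\tfL(s)$, and the sign characteristics of $\tfL$ are those of this triple. Since \eqref{eq:linsys1} is internally symmetric with respect to $\sS_n$ in the sense of \eqref{symmetry of W}, the pair $(\sS_n,\cA)$ encodes the same eigenvalues; its sign characteristics differ from those of $\tfL$ only by the sign flip induced by the factor $s$ in $\tfG(s)=sB\T\tfL(s)^{-1}B$ at the negative real eigenvalues, which I would track explicitly. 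Combining this with the previous paragraph yields the stated ordering $\lambda_1^-\le\dots\le\lambda_n^-<\lambda_1^+\le\dots\le\lambda_n^+<0$.

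For the claim that \emph{every} symmetric $n$-dimensional realization inherits overdamping, I would argue by invariance. Two such realizations produce the same $\tfG(s)$, hence two internally symmetric first order realizations of the form \eqref{eq:linsys1} of the same symmetric, invertible transfer function. If both are minimal, the unique state space transformation $T$ relating them satisfies \eqref{eq:change in int symmetry} with $S_1=S_2=\sS_n$, that is $T\T\sS_n T=\sS_n$; thus $(\sS_n,\cA)$ and $(\sS_n,\widetilde{\cA})$ are congruent-similar in the sense of Definition~\ref{def:consim} and share their eigenvalue distribution and sign characteristics. Consequently the quadratic polynomial of the second realization has the same sign-characteristic ordering, and the direction $i)\Rightarrow ii)$ of Theorem~\ref{overdamping and signs} gives $\tM,\tD,\tK>0$ together with \eqref{overdamping condition}.

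The main obstacle, I expect, is precisely this invariance step: one must ensure that the overdamping-relevant data, namely the real eigenvalues and their types, genuinely passes between different same-dimension realizations, which relies on the standard-triple correspondence of Theorem~\ref{resolvent form st} and the rigidity of the internal symmetry in \eqref{symmetry of W}--\eqref{eq:change in int symmetry}. One must also reconcile the two sign conventions in play, namely the pole sign characteristics of $\tfG$ taken with respect to $\sS_n$ as in Definition~\ref{def:polezerosignchar} versus the sign characteristics of the quadratic polynomial $\tfL$ appearing in Theorem~\ref{overdamping and signs}. Ruling out non-minimality, so that the McMillan degree equals $2n$ and the relating transformation $T$ exists, is a secondary point that the positive definiteness of $M$, $D$, $K$ should settle.
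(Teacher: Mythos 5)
Your overall route coincides with the paper's own: the corollary carries no displayed proof, only the preceding remark that ``the distribution of the real eigenvalues and their signs are system invariants,'' so the intended argument is exactly your combination of Theorem~\ref{overdamping and signs} with the realization-independence of sign characteristics through \eqref{symmetry of W}--\eqref{eq:change in int symmetry}. You have reconstructed that skeleton and correctly located the two delicate points (the sign convention and minimality), but both of the places where you defer the work contain genuine problems.

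First, a repairable slip: your standard triple does not generate $\tfL(s)^{-1}$. With $M=HH\T$, $K=GG\T$ one computes $\begin{bmatrix}I_n&0\end{bmatrix}(sI_{2n}-\cA)^{-1}\begin{bsmallmatrix}0\\I_n\end{bsmallmatrix}=G\T\tfL(s)^{-1}H$, so the correct choice is $X=\begin{bmatrix}G^{-\top}&0\end{bmatrix}$, $Y=\begin{bsmallmatrix}0\\H^{-1}\end{bsmallmatrix}$; your $X=\begin{bmatrix}H\T G^{-\top}&0\end{bmatrix}$, $Y=\begin{bsmallmatrix}0\\H^{-\top}H^{-1}\end{bsmallmatrix}$ returns $H\T\tfL(s)^{-1}HH^{-\top}H^{-1}\neq\tfL(s)^{-1}$ in general (harmless after normalizing $M=I_n$, which a congruence permits and which preserves \eqref{overdamping condition}). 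Second, and substantively: the sign flip you promise to ``track explicitly'' is the entire content of the last assertion, and when tracked it does not come out as you state. Writing an eigenvector of $\cA$ at a real $\lambda\neq 0$ as $v=\begin{bsmallmatrix}\lambda^{-1}G\T w\\ H\T w\end{bsmallmatrix}$ with $\tfL(\lambda)w=0$ gives $v\T\sS_n v=\lambda^{-1}\,w\T(2\lambda M+D)w$, so by Proposition~\ref{prop: compute signs} the type of each negative eigenvalue of $(\sS_n,\cA)$ is the \emph{negative} of the standard-triple sign characteristic of $\tfL$; transferring Theorem~\ref{overdamping and signs} therefore produces the ordering with the roles of $+$ and $-$ interchanged (scalar check: for $s^2+3s+2$, i.e.\ $d=3$, $k=2$, the eigenvalue $-2$ is of positive and $-1$ of negative type for $(\sS_1,\cA)$). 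So you must either justify reading the superscripts in the quadratic-polynomial convention --- this is the same $1/\lambda$ flip the paper itself works through in the proof of Theorem~\ref{thm:eq cond so}, where $(\sS_{r-m}\hA_{\rms}^{-1},\hA_{\rms})$ is compared with $(-\sS_k,\diag(\cA_{44},\cA_{55}))$ --- or the combination simply does not ``yield the stated ordering''; asserting it is the gap. Finally, your closing claim that positive definiteness of $M$, $D$, $K$ settles minimality is false: taking $B=0$ gives an overdamped system with $\tfG(s)=0$, a transfer function that also has non-overdamped symmetric realizations, so the uniqueness of $T$ in \eqref{eq:change in int symmetry}, on which your invariance step rests, genuinely requires a minimality (controllability of $(\cA,\cB)$) hypothesis --- an imprecision the paper's statement shares, but which your argument invokes explicitly and cannot discharge from definiteness alone.
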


	In order to show that positive real balanced truncation preserves the overdamping structure, we use some results of~\cite{BindYe95} and therefore need some notation.
	
	\begin{definition}\label{rhodeltasigma}
		Let $(S,A)\in\R^{2n\times 2n}\times\R^{2n\times 2n}$, where $S$ is symmetric and nonsingular, $A$ is $S$-self-adjoint and diagonalizable over $\C$.
		Consider the set $C^{\pm}:=\{x\in\mathbb{C}^{2n}\ | \  x^*Sx\gtrless0\}$ and, for a~subspace $\cS\subseteq \mathbb{C}^{2n}$, let
		\begin{equation}
		\iota^{+}(\cS):= \begin{cases}
		\  \ +\infty & \text{if } \cS\cap C^+=\emptyset, \\
		\sup\limits_{x\in\cS\cap C^+} \rho(x) & \text{otherwise}
		\end{cases}, \ \iota^{-}(\cS):= \begin{cases}
		\  \ -\infty & \text{if } \cS\cap C^-=\emptyset, \\
		\inf\limits_{x\in\cS\cap C^-} \rho(x) & \text{otherwise},
		\end{cases}
		\end{equation}
		where $\rho(x):=\frac{x^*SAx}{x^* Sx}$. We denote by
		\begin{equation*}
		\begin{split}
		\delta^+_h(S,A):=&\{\inf\iota^+(\cS)\ | \ \cS\text{ subspace of } \mathbb{C}^{2n} \text{ with }\dim\cS=2n-h+1\} \\
		\text{and} \quad \sigma^-_h(S,A):=&\{\sup\iota^-(\cS)\ | \  \cS\text{ subspace of } \mathbb{C}^{2n} \text{ with }\dim\cS=2n-h+1\}.
		\end{split}
		\end{equation*}
	\end{definition}

	Note that one has $\delta_{2n}^+(S,A)\leq \ldots \leq \delta_1^+(S,A)$ and $\sigma_1^-(S,A)\leq \ldots \leq \sigma_{2n}^-(S,A)$. We present two results from~\cite{BindYe95}. The second one treats the assignment of the values $\sigma_j^-(S,A)$ and $\delta_j^+(S,A)$ to eigenvalues of positive and negative type for tuples $(S,A)$ as in Definition~\ref{rhodeltasigma} where all eigenvalues are real and $\lambda_1^-\leq \ldots \leq \lambda^-_{n}\leq\lambda_{1}^+\leq \ldots \leq \lambda_n^+$. In the first result, we study the values $\sigma_{j}^-(S,A)$ for tuples $(S,A)$ that do not satisfy such a~distribution of the sign characteristics.
	
	\begin{lemma}\cite[Lem.~3.2]{BindYe95}\label{muplus}
		Let $(S,A)\in\R^{2n\times 2n}\times\R^{2n\times 2n}$, where $S=U\T \sS_n U$ for some $U\in\Gl_{2n}(\R)$ and $A$ is $S$-self-adjoint and diagonalizable over $\C$. Suppose the eigenvalues of $(S,A)$ do not satisfy $\lambda_1^-\leq \ldots \leq \lambda^-_{n}\leq\lambda_{1}^+\leq \ldots \leq \lambda_n^+$, i.\,e., $(S,A)$ has at least one non-real eigenvalue, or there exists an eigenvalue of positive type which is larger than an eigenvalue of negative type. Then, $\sigma_{n+j}^-(S,A)=\infty$ for $1\leq j\leq n$.
	\end{lemma}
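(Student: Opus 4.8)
The plan is to collapse the $n$ assertions into one and then build an explicit family of subspaces on which the quotient $\rho$ stays large. Since the note following Definition~\ref{rhodeltasigma} gives $\sigma_1^-(S,A)\leq\ldots\leq\sigma_{2n}^-(S,A)$, it suffices to prove $\sigma_{n+1}^-(S,A)=\infty$; the values $\sigma_{n+j}^-(S,A)$ for $j\geq 2$ are then infinite as well. Now $\sigma_{n+1}^-(S,A)$ is the supremum of $\iota^-(\cS)$ over all subspaces $\cS\subseteq\C^{2n}$ of dimension $2n-(n+1)+1=n$, and the hypothesis $S=U^\top\sS_n U$ forces $S$ to have signature $(n,n)$. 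The monotonicity $\sigma_h^-$ follows at once from the fact that every $\cS$ contains a subspace $\cS'$ of one lower dimension with $\cS'\cap C^-\subseteq\cS\cap C^-$, hence $\iota^-(\cS')\geq\iota^-(\cS)$.

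First I would extract from the failure of the displayed interlacing a distinguished two–dimensional, $A$–invariant, $S$–nondegenerate subspace $\mathcal{W}$ with $S|_{\mathcal{W}}$ of signature $(1,1)$, on which $\rho$ is unbounded from above over $\mathcal{W}\cap C^-$. If $(S,A)$ has a non-real eigenvalue $\sigma\pm\iunit\tau$ with $\tau>0$, Theorem~\ref{can unsim} yields a real $A$–invariant block on which $(S|_{\mathcal{W}},A|_{\mathcal{W}})$ is congruent-similar to $(\sJ_1,\sP_{\sigma,\tau})$; parametrizing the negative cone by $t=x_1/x_2<0$ gives $\rho(x)=\sigma-\tfrac{\tau}{2}(t-t^{-1})\to\infty$ as $t\to-\infty$. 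If instead all eigenvalues are real but the interlacing fails, then some negative-type eigenvalue $\mu^-$ exceeds some positive-type eigenvalue $\mu^+$, i.e.\ $\mu^+<\mu^-$; their eigenvectors $v^+,v^-$ are $S$–orthogonal (distinct real eigenvalues) and, by Proposition~\ref{prop: compute signs}, satisfy $(v^+)^*Sv^+>0$ and $(v^-)^*Sv^-<0$, so $\mathcal{W}=\operatorname{span}\{v^+,v^-\}$ has the stated structure and a direct computation shows $\rho$ ranges over $[\mu^-,\infty)$ on $\mathcal{W}\cap C^-$. In either case I may fix $w\in\mathcal{W}\cap C^-$ with $w^*Sw=-1$ and $\rho(w)=r$ arbitrarily large.

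Next I would enlarge $w$ to an $n$–dimensional subspace without destroying the estimate. As $\mathcal{W}$ is $A$–invariant and $S$–nondegenerate, its $S$–orthogonal complement $\mathcal{W}^{\perp_S}$ is $A$–invariant of signature $(n-1,n-1)$ and therefore contains an $S$–positive-definite subspace $\mathcal{P}$ of dimension $n-1$; set $\cS:=\mathcal{P}\oplus\C w$, of dimension $n$. For $x=p+tw$ with $p\in\mathcal{P}$, the $S$– and $SA$–orthogonality of $\mathcal{P}$ and $w$ (the former because $p\in\mathcal{W}^{\perp_S}$, $w\in\mathcal{W}$, the latter because $Aw\in\mathcal{W}$ and $Ap\in\mathcal{W}^{\perp_S}$) decouples the forms as $x^*Sx=p^*Sp-|t|^2$ and $x^*SAx=p^*SAp-r|t|^2$. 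Writing $\pi:=p^*Sp\geq 0$, $\alpha:=p^*SAp$ and $u:=|t|^2$, a vector lies in $C^-$ exactly when $u>\pi$, where $\rho(x)=(ru-\alpha)/(u-\pi)$. Because $S|_{\mathcal{P}}>0$ and $\mathcal{P}$ is finite-dimensional, $\rho|_{\mathcal{P}}$ is bounded, so choosing $r>\sup_{\mathcal{P}}\rho$ makes $u\mapsto(ru-\alpha)/(u-\pi)$ strictly decreasing toward the limit $r$; hence $\rho(x)\geq r$ on all of $\cS\cap C^-$, with the value $r$ attained along $p=0$. Thus $\iota^-(\cS)=r$, and letting $r\to\infty$ gives $\sigma_{n+1}^-(S,A)=\infty$, which by the monotonicity reduction finishes the proof.

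The hard part will be the last step, not the exhibition of a single large-$\rho$ vector. Padding $w$ with $\mathcal{P}$ could in principle introduce new negative-cone vectors of small Rayleigh quotient and pull $\iota^-(\cS)=\inf_{\cS\cap C^-}\rho$ back down, so the whole argument hinges on controlling $\rho$ over the \emph{entire} negative cone of $\cS$. The decoupling afforded by the $A$–invariance of $\mathcal{W}$ and $\mathcal{W}^{\perp_S}$, together with the threshold $r>\sup_{\mathcal{P}}\rho$ that keeps the resulting one-parameter quotient above $r$, is precisely what guarantees $\iota^-(\cS)=r$; making these two ingredients cooperate is the crux.
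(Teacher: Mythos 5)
The paper contains no proof of this lemma at all: it is imported verbatim from \cite{BindYe95} (Lem.~3.2) as an external tool for Section~\ref{sec:overdamped}, so there is no internal argument to compare against, and your proposal has to stand on its own as a self-contained proof. It does. The reduction to $\sigma^-_{n+1}$ via the monotonicity $\sigma^-_1\leq\ldots\leq\sigma^-_{2n}$ is sound; the extraction of the two-dimensional $A$-invariant, $S$-nondegenerate block $\mathcal{W}$ with signature $(1,1)$ is correctly sourced (a $\bigl(\sJ_1,\sP_{\sigma,\tau}\bigr)$ block from Theorem~\ref{can unsim} in the non-real case; in the all-real case, $S$-orthogonality of eigenvectors to the distinct eigenvalues $\mu^+<\mu^-$ plus Proposition~\ref{prop: compute signs}), and both blow-up computations check out: $\rho=\sigma-\tfrac{\tau}{2}\left(t-t^{-1}\right)\to\infty$ as $t\to-\infty$, and $\rho=\bigl(\mu^+s-\mu^-\bigr)/(s-1)\uparrow\infty$ as $s\to1^-$, the latter using exactly the failed interlacing $\mu^+<\mu^-$. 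The padding step, which you rightly identify as the crux, is also correct: since $\mathcal{W}$ and hence $\mathcal{W}^{\perp_S}$ are $A$-invariant and spanned by real vectors, the cross terms $w^*SAp$ and $p^*SAw$ vanish, the forms decouple as $x^*Sx=\pi-u$, $x^*SAx=\alpha-ru$, and with $\alpha\leq r\pi$ one gets directly $\rho(x)-r=(r\pi-\alpha)/(u-\pi)\geq0$ on $\cS\cap C^-$, with equality at $p=0$, so $\iota^-(\cS)=r$ exactly (the edge case $n=1$, where $\mathcal{P}=\{0\}$, degenerates harmlessly). Two small remarks: first, the lemma as printed in the paper has the ``i.\,e.''\ clause backwards --- the negation of the displayed chain is an eigenvalue of positive type \emph{smaller} than one of negative type, which is the reading you silently (and necessarily) adopt, since the opposite configuration would send $\rho\to-\infty$; second, your quantifier order is slightly loose ($w$ with $\rho(w)=r$ should be chosen \emph{after} fixing $\mathcal{P}$ and the threshold $\sup_{\mathcal{P}}\rho$), but this is benign because $\mathcal{P}$ depends only on $\mathcal{W}$ and $\rho$ attains every sufficiently large value on $\mathcal{W}\cap C^-$.
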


	\begin{theorem}\cite[Cor.~4.4 \& Thm.~4.3]{BindYe95}\label{exceptional case}
		Let $(S,A)\in\R^{2n\times 2n}\times\R^{2n\times 2n}$, where $S=U\T \sS_n U$ for some $U\in\Gl_{2n}(\R)$ and $A$ is $S$-self-adjoint and diagonalizable. Suppose that all eigenvalues of $(S,A)$ are real and  satisfy $\lambda_1^-\leq \ldots \leq \lambda^-_{n}\leq\lambda_{1}^+\leq \ldots \leq \lambda_n^+$. Then, for $1\leq j\leq n$ we have
		\begin{align*}
			\sigma^-_j(S,A)=&-\infty,  \quad & \sigma^-_{n+j}(S,A)&=\lambda_j^-, \\ \quad \delta^+_{2n-j+1}(S,A)=& \ \lambda^+_{j}, \quad\qquad\text{ and } & \delta^+_j(S,A)&=\infty. 
		\end{align*}
		In particular, if $\lambda^-_{n}={\tiny \lambda}_{1}^+$, then  $\sigma^-_{2n}(S,A)=\lambda^-_{n}(S,A)=\lambda_{1}^+(S,A)=\delta^+_{2n}(S,A)$.
	\end{theorem}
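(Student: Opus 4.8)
The plan is to reduce $(S,A)$ to its diagonal canonical form and then read off the values of $\sigma_h^-$ and $\delta_h^+$ by coupling an elementary monotonicity estimate for $\rho$ with the classical Courant--Fischer theorem on the two definite invariant pieces. I would begin by observing that $\sigma_h^-$ and $\delta_h^+$ depend only on the congruence-similarity class of $(S,A)$: under $x=Ty$ with $T\in\Gl_{2n}(\R)$ one has $\rho(x)=\tfrac{y^*(T\T SAT)y}{y^*(T\T ST)y}$, the cones $C^\pm$ are carried to the corresponding cones of $(T\T ST,T^{-1}AT)$, and $T$ is a bijection on subspaces of each fixed dimension, so all the $\iota^\pm$ and hence $\sigma_h^-,\delta_h^+$ are unchanged. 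Since $A$ is real, $S$-self-adjoint, with all eigenvalues real and diagonalizable, Theorem~\ref{can unsim} (with $c=0$) together with a permutation lets me assume $S=\sS_n$ and $A=\diag(\Lambda^-,\Lambda^+)$, where $\Lambda^\pm=\diag(\lambda_1^\pm,\ldots,\lambda_n^\pm)$ collect the negative- and positive-type eigenvalues. Writing $x=(u,w)$ this gives $x^*Sx=\|w\|^2-\|u\|^2$ and $\rho(x)=\tfrac{w^*\Lambda^+w-u^*\Lambda^-u}{\|w\|^2-\|u\|^2}$.

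The key elementary input uses the gap $\lambda_n^-\le\lambda_1^+$: one checks that $\rho(x)\ge\lambda_1^+$ for every $x\in C^+$ and $\rho(x)\le\lambda_n^-$ for every $x\in C^-$, and, comparing $x=(u,w)\in C^-$ with its projection $\Pi^-x:=(u,0)$ onto the $S$-negative block, a direct computation yields the monotonicity $\rho(x)\le\rho(\Pi^-x)=\tfrac{u^*\Lambda^-u}{\|u\|^2}$. This turns the indefinite Rayleigh quotient, restricted to $C^-$, into a quantity dominated by the ordinary Rayleigh quotient of $\Lambda^-$, which is the mechanism producing the finite values $\lambda_j^-$. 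Note already that the two top values follow at once: $\sigma_{2n}^-=\sup_{x\in C^-}\rho(x)=\lambda_n^-$ (bounded by the gap inequality, attained at $e_n^-$) and, symmetrically, $\delta_{2n}^+=\inf_{x\in C^+}\rho(x)=\lambda_1^+$; these coincide whenever $\lambda_n^-=\lambda_1^+$, which proves the ``in particular'' clause directly from the elementary estimates.

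For the remaining finite values I would proceed as in Courant--Fischer. The lower bound $\sigma_{n+j}^-\ge\lambda_j^-$ comes from the explicit test subspace $\cS_0=\operatorname{span}\{e_j^-,\ldots,e_n^-\}$ of dimension $n-j+1$: it lies in the $S$-negative block, so $\cS_0\setminus\{0\}\subseteq C^-$ and $\iota^-(\cS_0)=\min\{\lambda_j^-,\ldots,\lambda_n^-\}=\lambda_j^-$. The matching upper bound $\iota^-(\cS)\le\lambda_j^-$ for every $\cS$ of dimension $n-j+1$ is intended to follow by feeding the projection inequality into the max--min characterization $\lambda_j^-=\max_{\dim V=n-j+1}\min_{0\neq u\in V}\tfrac{u^*\Lambda^-u}{\|u\|^2}$ to extract a vector of $\cS\cap C^-$ whose projected quotient is $\le\lambda_j^-$. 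The equalities $\sigma_j^-=-\infty$ for $j\le n$ hold because every subspace of dimension $2n-j+1\ge n+1$ is $S$-indefinite, hence contains an isotropic vector $y$; approaching $y$ through $C^-$ with $y^*SAy>0$ (available once the gap is strict) forces $\rho\to-\infty$, so $\iota^-\equiv-\infty$ on such subspaces. The formulas for $\delta^+$ are obtained verbatim with $C^-$, $\inf$, $\lambda_j^-$ replaced by $C^+$, $\sup$, $\lambda_j^+$, using $\rho\ge\lambda_1^+$ in place of $\rho\le\lambda_n^-$.

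The hard part will be the matching upper bound for the interior eigenvalues: the projection inequality alone does not close it, since $\cS$ need not meet the negative block $\{(u,0)\}$ in the required dimension inside $\C^{2n}$, so one must genuinely control the interference of the $S$-positive directions with $\cS\cap C^-$ — this indefinite min--max saddle argument is the technical core of~\cite{BindYe95}. A further delicacy is the boundary case $\lambda_n^-=\lambda_1^+$: there the blow-up mechanism degenerates (on mixed-type directions $\rho$ is constant rather than unbounded), so some interior formulas can fail and only the robust top values $\sigma_{2n}^-$ and $\delta_{2n}^+$ survive, which is exactly why the equality case is isolated in the concluding ``in particular'' statement.
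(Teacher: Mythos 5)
First, a structural point: the paper contains no proof of this statement at all --- Theorem~\ref{exceptional case} is imported verbatim from \cite[Cor.~4.4 \& Thm.~4.3]{BindYe95}, so there is no in-paper argument to compare against, and a complete blind proof would have to reproduce Binding--Ye's work. Measured against that task, your proposal is a correct but genuinely partial proof, and you say so yourself. What you do establish is sound: the invariance of $\sigma_h^-$ and $\delta_h^+$ under congruence-similarity, the reduction via Theorem~\ref{can unsim} to $(S,A)=(\sS_n,\diag(\Lambda^-,\Lambda^+))$, the gap estimates $\rho(x)\geq\lambda_1^+$ on $C^+$ and $\rho(x)\leq\lambda_n^-$ on $C^-$, the projection inequality $\rho(x)\leq u^*\Lambda^-u/\|u\|^2$ for $x=(u,w)\in C^-$ (the chain $w^*\Lambda^+w\,\|u\|^2\geq\lambda_1^+\|w\|^2\|u\|^2\geq\lambda_n^-\|w\|^2\|u\|^2\geq(u^*\Lambda^-u)\|w\|^2$ checks out even for negative eigenvalues), the resulting values $\sigma_{2n}^-=\lambda_n^-$ and $\delta_{2n}^+=\lambda_1^+$, hence the entire ``in particular'' clause, and the one-sided bounds $\sigma_{n+j}^-\geq\lambda_j^-$, $\delta_{2n-j+1}^+\leq\lambda_j^+$ via explicit test subspaces.

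The genuine gap is the one you flag in your closing paragraph: the matching upper bound $\sigma_{n+j}^-\leq\lambda_j^-$ for $1\leq j<n$ (and dually for $\delta^+$) is not proved. The intersection argument with $W'=\mathrm{span}\{e_1^-,\ldots,e_j^-\}\oplus(\text{positive block})$ produces a nonzero $x_0\in\cS\cap W'$ whose negative-block component has Rayleigh quotient at most $\lambda_j^-$, but nothing forces $x_0\in C^-$, and the projection inequality is useless for vectors outside $C^-$; closing this requires the indefinite min--max machinery that is precisely the content of \cite[Thm.~4.3]{BindYe95}, which you outsource rather than supply. Two further remarks. Your blow-up argument for $\sigma_j^-=-\infty$, $j\leq n$, is correct under a strict gap (any nonzero isotropic $y=(u,w)$ has $\|u\|=\|w\|\neq0$, whence $y^*SAy\geq(\lambda_1^+-\lambda_n^-)\|w\|^2>0$ and $\rho\to-\infty$ approaching $y$ from inside $C^-$), but, as you observe, it degenerates when $\lambda_n^-=\lambda_1^+$; in fact for $n=1$, $A=\mu I_2$ one computes $\sigma_1^-=\mu\neq-\infty$, so the theorem as transcribed in the paper is only accurate for those formulas under strictness --- your caveat here is perceptive and correct, though the paper only ever invokes the statement in situations (Lemma~\ref{exceptional case subspace}) where either strictness holds or only the robust values $\sigma_{2n}^-$, $\delta_{2n}^+$ are used. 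In sum: the elementary half of the theorem is verified convincingly, but the interior eigenvalue formulas remain unproved, so the proposal does not stand as a self-contained proof.
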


	A consequence of Theorem~\ref{exceptional case} is that for $S$-self-adjoint and diagonalizable $A$ with only real eigenvalues which additionally fulfill $\lambda_1^-\leq \ldots \leq \lambda^-_{n}\leq\lambda_{1}^+\leq \ldots \leq \lambda_n^+$, this property is preserved under the application of certain two-sided reduction matrices on the pair $(S,A)$.
	
	\begin{lemma}\label{exceptional case subspace}
		Let $(S,A)\in\R^{2n\times 2n}\times\R^{2n\times 2n}$, where $S=U\T \sS_n U$ for some $U\in\Gl_{2n}(\R)$ and $A$ is $S$-self-adjoint. Suppose that $A$ is diagonalizable over $\mathbb{C}$ and that $(S,A)$ has only real eigenvalues that satisfy $\lambda_1^-\leq \ldots \leq \lambda^-_{n}<\lambda_{1}^+\leq \ldots \leq \lambda_n^+$. Let $W,V\in\R^{2n\times r}$ be full rank matrices with $r\leq 2n$ and suppose that for a symmetric matrix $H\in\R^{r\times r}$ it holds that $H W\T=V\T S$ and $W\T V=I_r$. If the eigenvalues of $(H,W^{\top}AV)$ are semi-simple, 
		then they are all real and, if we denote them by $\tilde{\lambda}_1^{\pm}\leq \ldots \leq \tilde{\lambda}^{\pm}_{r^{\pm}}$, where $r^++r^-=r$, they  fulfill $\tilde{\lambda}_1^-\leq \ldots \leq \tilde{\lambda}^-_{r^-}<\tilde{\lambda}_1^+\leq \ldots \leq \tilde{\lambda}_{r^+}^+$. 
	\end{lemma}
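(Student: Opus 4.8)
The plan is to recast the assumed interlacing $\lambda_n^-<\lambda_1^+$ as the positive definiteness of a shifted pencil $SA-\alpha S$, and then to observe that this definiteness passes to the compression that defines $(H,W\T A V)$, from which the conclusion is immediate. This route avoids the variational quantities $\sigma_j^-,\delta_j^+$ altogether, although Lemma~\ref{muplus} and Theorem~\ref{exceptional case} give an alternative.

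First I would extract the algebra of the hypotheses $H W\T=V\T S$ and $W\T V=I_r$. Transposing the first (using that $S$ and $H$ are symmetric) gives $WH=SV$, and together with $W\T V=I_r$ this yields the two compression identities $V\T S V=HW\T V=H$ and $V\T S A V=HW\T AV=H A_r$, where I abbreviate $A_r:=W\T A V$. In particular $H=V\T S V$ is nonsingular (if $Hz=0$ then $SVz=WHz=0$, so $Vz=0$ and hence $z=0$ since $V$ has full column rank), and $A_r$ is $H$-self-adjoint: using $WH=SV$ and $A\T S=SA$,
\[
A_r\T H=V\T A\T W H=V\T A\T S V=V\T S A V=H A_r .
\]
Thus $(H,A_r)$ is again a pair of the admissible type, with $H$ symmetric nonsingular and $A_r$ an $H$-self-adjoint matrix.

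The key step is the reformulation of the interlacing. By Theorem~\ref{can unsim}, applied with no complex blocks since all eigenvalues of $(S,A)$ are real, there is $T\in\Gl_{2n}(\R)$ with $T\T S T=\diag(\varepsilon_1,\ldots,\varepsilon_{2n})$ and $T^{-1}AT=\diag(\lambda_1,\ldots,\lambda_{2n})$, where $\varepsilon_i=+1$ for the positive-type and $\varepsilon_i=-1$ for the negative-type eigenvalues. Consequently
\[
T\T(SA-\alpha S)T=\diag\bigl(\varepsilon_1(\lambda_1-\alpha),\ldots,\varepsilon_{2n}(\lambda_{2n}-\alpha)\bigr),
\]
and for any $\alpha$ in the nonempty open gap $(\lambda_n^-,\lambda_1^+)$ each diagonal entry is strictly positive: for $\varepsilon_i=+1$ one has $\lambda_i-\alpha>0$, and for $\varepsilon_i=-1$ one has $\varepsilon_i(\lambda_i-\alpha)=\alpha-\lambda_i>0$. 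Hence $SA-\alpha S$ is a real symmetric positive definite matrix. Compressing with $V$ and invoking the identities above gives $H A_r-\alpha H=V\T(SA-\alpha S)V$, which is again positive definite because $V$ has full column rank.

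Finally I would read off the reduced spectrum from $H(A_r-\alpha I)>0$. If $A_r y=\mu y$ with $y\neq 0$, then $y^*H(A_r-\alpha I)y=(\mu-\alpha)\,y^*Hy$ is real and strictly positive; since $y^*Hy\in\R$, this forces $y^*Hy\neq 0$ and $\mu\in\R$, so every eigenvalue is real. Moreover $\mu>\alpha$ holds exactly on the positive-type eigenvectors ($y^*Hy>0$) and $\mu<\alpha$ exactly on the negative-type ones, so each eigenvalue is of pure type; as the eigenvalues are semi-simple, $A_r$ is diagonalizable and the type counts satisfy $r^++r^-=r$. Therefore $\tilde\lambda_1^-\le\ldots\le\tilde\lambda_{r^-}^-<\alpha<\tilde\lambda_1^+\le\ldots\le\tilde\lambda_{r^+}^+$, which is the claim. (One may note that positive definiteness of $H(A_r-\alpha I)$ already forces semi-simplicity and real spectrum, since writing $H(A_r-\alpha I)=R\T R$ with $R$ invertible makes $A_r-\alpha I=H^{-1}R\T R$ similar to the symmetric matrix $RH^{-1}R\T$.) The hard part, conceptually, is precisely the second paragraph: seeing that the interlacing hypothesis is nothing but definiteness of the pencil $SA-\alpha S$, after which the entire statement reduces to the elementary fact that a congruence compression $V\T(\cdot)V$ preserves positive definiteness.
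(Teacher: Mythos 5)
Your proof is correct, and it takes a genuinely different route from the paper's own argument. The paper stays inside the Binding--Ye variational framework: it restricts the Rayleigh-type functional $\rho$ to $\im V$ to obtain $\sigma_r^-(H,B)\leq\sigma_{2n}^-(S,A)<\infty$, invokes Lemma~\ref{muplus} to exclude non-real eigenvalues and any positive-type eigenvalue lying below a negative-type one, and then gets the strict separation from $\delta_r^+(H,B)\geq\delta_{2n}^+(S,A)$ via Theorem~\ref{exceptional case}. You instead recognize that the hypothesis $\lambda_n^-<\lambda_1^+$ is exactly definitizability of the pencil at any shift $\alpha$ in the gap: by the canonical form of Theorem~\ref{can unsim} (with $c=0$, since all eigenvalues are real and $A$ is diagonalizable) the symmetric matrix $SA-\alpha S$ is positive definite, and your compression identities $H=V\T SV$ and $HA_r=V\T SAV$ (both correctly derived from $HW\T=V\T S$, $W\T V=I_r$, including the nonsingularity of $H$) show that this definiteness survives the congruence $V\T(\cdot)V$, giving $H(A_r-\alpha I)>0$; the conclusion then drops out eigenvector by eigenvector, consistent with Proposition~\ref{prop: compute signs}. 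Your approach buys three things: it is more elementary and self-contained (no machinery from \cite{BindYe95} beyond the canonical form the paper already has); your parenthetical observation that $A_r-\alpha I=H^{-1}R\T R$ is similar to the symmetric matrix $RH^{-1}R\T$ shows the semi-simplicity hypothesis on $(H,W\T AV)$ is in fact redundant under these assumptions, which the paper's proof does not reveal; and since $\alpha$ ranges over the whole gap, the inclusion $(\lambda_n^-,\lambda_1^+)\subseteq(\tilde\lambda_{r^-}^-,\tilde\lambda_1^+)$ is made transparent (the paper's chain of inequalities yields the same inclusion, but less directly). What the paper's route buys is coherence with its surrounding development: Lemma~\ref{muplus} and Theorem~\ref{exceptional case} are quoted anyway, and the variational quantities transfer verbatim to the compressed pair, so the proof there is a two-line computation given that apparatus.
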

	\begin{proof}
		We set $B:=W^{\top}AV$ and for $v\in\C^r$ we write
		\begin{equation*}
			\tilde{\rho}(v):=\frac{v^*V^{\top}SAVv}{v^*V\T SVv}=\frac{v^*HBv}{v^* Hv}.
		\end{equation*}  By the previous Theorem we have that $\lambda^-_{n}=\sigma^-_{2n}(S,A)=\sup\{\rho(x) \ | \ x\in C^-\}$. Then,
		\begin{equation*}
			\begin{split}
				\sigma_r^-(H,B)=\sup\{\tilde{\rho}(v) \ | \ v\in \C^r, \ v^{*}Hv>0\}&=\sup\{\rho(x) \ | \ x\in\C^{2n}, \, x\in C^-\cap \bild V\} \\
				&\leq \sigma^-_{2n}(S,A)<\infty.
			\end{split}
		\end{equation*}  Now, from Lemma~\ref{muplus} it follows that all eigenvalues of $(H,B)$ are real and $\tilde{\lambda}_1^-\leq \ldots \leq \tilde{\lambda}^-_{r^-}\leq\tilde{\lambda}_1^+\leq \ldots \leq \tilde{\lambda}_{r^+}^+$. Hence, it remains to show that $\tilde{\lambda}^-_{r^-}<\tilde{\lambda}_{1}^+$. Note that
		\begin{equation*}
			\begin{split}
				\delta_r^+(H,B)=\inf\{\tilde{\rho}(v) \ | \ v\in \C^r, \ v^{*}Hv<0\}&=\inf\{\rho(x) \ | \ x\in\C^{2n},\, x\in C^+\cap \bild V\} \\
				&\geq\inf\{\rho(x) \ | \ x\in C^+ \}=\delta_{2n}^+(S,A).
			\end{split}
		\end{equation*} In particular, by Theorem~\ref{exceptional case} it holds that $\tilde{\lambda}^-_{r^-}=\tilde{\sigma}_r^-(H,B)\leq \sigma_{2n}^-(S,A)=\lambda^-_{n}<\lambda^-_{1}=\delta_{2n}^+(S,A)\leq \delta_{r}^+(H,B)=\tilde{\lambda}_{1}^+$.
	\end{proof}
	
	For the main theorem of this section we combine the lemma above with the results from Section~\ref{sec:balancing}. More precisely, we prove that the structure of an~overdamped second order system is preserved applying the reduction ansatz from Section~\ref{sec: prbt so}. Naturally, the reduction matrices $V$ and $W\T$ from~\eqref{eq:redfo} that are used to derive the reduced system fulfill the requirements of the lemma above, i.\,e., $W\T V=I$ and $\sS_{r}W\T=V\T \sS_n$. Nevertheless, in order to prove that the poles and zeros of the reduced model are all smaller than zero we have to work a~bit more.
	
	\begin{theorem}
		Let a~stabilizable system $[\cA,\cB,\cC]$ be given. Assume that
        $\cA\in\R^{2n\times 2n}$ and $\cB\in\R^{2n\times m}$ are structured as in \eqref{eq:ABstruc2} for some $G,D\in\R^{n\times n}$ with $D=D^\top\geq 0$, $G\in\Gl_n(\R)$ and $B\in\R^{n\times m}$. Let $M:=I_n$ and $K:=GG\T$ and assume that $ D>0$ and they fulfill~\eqref{overdamping condition}.
    	Let further $[\widetilde{\cA},\widetilde{\cB},\widetilde{\cC}]$ be the reduced system from Theorem~\ref{thm: prop redsys} of order $2r$ for $r:=r^+=r^-$ and let $\tilde{\tfG}(s)$ be its transfer function. Suppose that the eigenvalues of $\cA$ and $\widetilde{\cA}$ and the zeros of $[\cA,\cB,\cC]$ and $[\widetilde{\cA},\widetilde{\cB},\widetilde{\cC}]$ are all semi-simple. Then all eigenvalues of $\cA$ are real and fulfill $\tilde{\lambda}_{1}^-\leq \ldots \leq \tilde{\lambda}_r^-<\tilde{\lambda}_1^+\leq \ldots \leq \tilde{\lambda}^+_{r}$. In particular, every second order realization of $\tilde{\tfG}(s)$ of the form~\eqref{eqn:sosys} with symmetric $\tM,\tD,\tK\in\R^{r\times r}$ fulfills $\tM,\tD,\tK>0$ and~\eqref{overdamping condition}. Moreover, $\tilde{\tfG}(s)$ possesses such a~second order realization.
	\end{theorem}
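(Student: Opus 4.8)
The plan is to treat the poles and the zeros of the reduced system separately, in each case reducing the claim to an application of Lemma~\ref{exceptional case subspace}, and then to read off overdamping from the sign characteristics.

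\emph{The poles.} By Corollary~\ref{overdamping and real} the pair $(\sS_n,\cA)$ has only real eigenvalues obeying $\lambda_1^-\leq\dots\leq\lambda_n^-<\lambda_1^+\leq\dots\leq\lambda_n^+<0$, and $\cA$ is $\sS_n$-self-adjoint and, by the semi-simplicity hypothesis, diagonalizable. The reduction matrices from~\eqref{proj WV} satisfy $W^\top V=I_{2r}$ and $\sS_r W^\top=V^\top\sS_n$, so Lemma~\ref{exceptional case subspace} applies with $(S,A)=(\sS_n,\cA)$, $H=\sS_r$ and $W^\top\cA V=\widetilde\cA$. Since the eigenvalues of $\widetilde\cA$ are assumed semi-simple, the lemma gives that they are real with $\tilde{\lambda}_1^-\leq\dots\leq\tilde{\lambda}_r^-<\tilde{\lambda}_1^+\leq\dots\leq\tilde{\lambda}_r^+$. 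To fix their sign I would invoke that $\tfG$ is strictly positive real (Remark~\ref{rem: sosap}), so that positive real balanced truncation yields an asymptotically stable $\widetilde\cA$; together with reality this forces $\tilde{\lambda}_r^+<0$. The eigenvalues of $\cA$ itself are already real by Corollary~\ref{overdamping and real}.

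\emph{Overdamping of every symmetric realization.} For any second order realization $\tfL(s)=s^2\tM+s\tD+\tK$ of $\tilde{\tfG}$ with symmetric coefficients and nonsingular $\tM$, I would pass to a self-adjoint standard triple $(X,A,S^{-1}X^\top)$ of $\tfL$. As explained in Section~\ref{sec:constMDK}, the sign characteristics of $(S,A)$ are the pole sign characteristics of $\tilde{\tfG}$, hence a transfer-function invariant; computed from the controllable-observable part of $[\widetilde\cA,\widetilde\cB,\widetilde\cC]$ they coincide with the real, negative, gap-ordered eigenvalues of $(\sS_r,\widetilde\cA)$ found above, any sub-multiset inheriting the ordering. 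Condition i) of Theorem~\ref{overdamping and signs} is thus met, and its equivalence with ii) yields $\tM,\tD,\tK>0$ together with~\eqref{overdamping condition}.

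\emph{Existence and the main obstacle.} For existence I would apply Theorem~\ref{thm:eq cond so}, so that it remains to verify its hypothesis ii), i.e.\ $\mu_i^-<\mu_i^+$ for the real zeros of the reduced system. I would obtain this from a second use of Lemma~\ref{exceptional case subspace}, now on the pair governing the zeros. First, for the original system $D>0$ forces the purely imaginary zero block of Lemma~\ref{inverse real lemma} to be absent (so $\ell=0$), whence the nonzero zeros are the eigenvalues of the $\sS_{n-m}$-self-adjoint matrix $\hat{\cA}_{\rms}$; the associated self-adjoint standard triple generates the sub-quadratic $\mu^2 I+\mu D_{11}+G_{13}G_{13}^\top$, which is a compression of $s^2 I+sD+K$ and hence again overdamped, so by Theorem~\ref{overdamping and signs} these zeros are real, negative and gap-ordered. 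Since positive real balanced truncation of the main system simultaneously reduces the zero subsystem $[\cA_{\rmz},\cB_{\rmz},\cC_{\rmz},\cD_{\rmz}]$ of Theorem~\ref{prbt second order}, the ordering $\mu_i^-<\mu_i^+$ should persist for the reduced zeros, giving ii) and thereby a symmetric realization, which is overdamped by the previous step. The hard part is precisely this last transfer: checking that the reduction matrices induced on the zero subsystem again satisfy $W^\top V=I$ and the signature relation required by Lemma~\ref{exceptional case subspace}, that is, that balancing the system is compatible with balancing its zero (inverse) subsystem. This compatibility, together with the compression claim for the sub-quadratic, carries the main technical weight of the proof.
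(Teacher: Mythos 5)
Your overall strategy coincides with the paper's: both proofs rest on Lemma~\ref{exceptional case subspace} for the poles, on Theorem~\ref{overdamping and signs} together with the realization-invariance of the sign characteristics (Corollary~\ref{overdamping and real}) for the ``every symmetric realization is overdamped'' part, and on the constructive steps of Theorem~\ref{thm:eq cond so} for existence; your variant for pole-negativity (strict positive realness of $\tfG$ via Remark~\ref{rem: sosap} plus stability of positive real balanced truncation) is a legitimate alternative to the paper's argument via invertibility of $\widetilde{\cA}$ (through $\hA_{16}\in\Gl_m(\R)$) combined with passivity. The genuine gap is exactly the one you flag yourself: you assert that the ordering $\mu_i^-<\mu_i^+$ ``should persist'' for the reduced zeros because truncation ``simultaneously reduces the zero subsystem'', but you never verify that the reduction matrices induced on the zero subsystem meet the hypotheses of Lemma~\ref{exceptional case subspace}. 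Without this, hypothesis ii) of Theorem~\ref{thm:eq cond so} is unproven and the existence claim does not follow.

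The paper closes this gap with structure you did not invoke. By Remark~\ref{rem:struc redsysso} one may assume $[\widetilde{\cA},\widetilde{\cB},\widetilde{\cC}]=[T\T\cA_{\rmb}T,T\T\cB_{\rmb},\cC_{\rmb}T]$ for the balanced realization $[\cA_{\rmb},\cB_{\rmb},\cC_{\rmb}]$ of Theorem~\ref{prbt second order}, whose KYP solutions have the form $\diag(I_{m+\ell},\Pi,I_{m+\ell})$ with $\Pi$ balancing the zero subsystem \eqref{eq:factor of inverse of Gr}; hence truncation only removes middle coordinates, the zero subsystem of the truncation equals the truncation $\wtT\T\cA_{\rmz}\wtT$ of the zero subsystem, and the induced reduction matrices are plain selection matrices, for which $\wtT\T\wtT=I$ and the signature relation are immediate. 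With this in hand the paper simply applies Lemma~\ref{exceptional case subspace} a second time — to the already gap-ordered reduced pole pair $(\sS_r,\widetilde{\cA})$ with this selection — to conclude $\tilde{\mu}_{1}^-\leq\ldots\leq\tilde{\mu}_{r-m}^-<\tilde{\mu}_1^+\leq\ldots\leq\tilde{\mu}_{r-m}^+$, and uses the discussion in~\cite{HarsJonc83} for asymptotic stability of the truncated zero subsystem. Note this makes your entire sub-quadratic detour superfluous: although your compression argument for $\mu^2 I+\mu D_{11}+G_{13}G_{13}\T$ is sound (overdamping \eqref{overdamping condition} is a pointwise vector condition and restricts to subspaces when $M=I$), the paper never needs the gap-ordering of the \emph{original} zeros, since the reduced zeros are obtained as eigenvalues of a compression of the reduced pole pair itself.
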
	
	\begin{proof}
        A~combination of Lemma~\ref{inverse real lemma} and Corollary~\ref{overdamping and real} implies that we can w.\,l.\,o.\,g. assume that $[\cA_{\rms},\cB_{\rms},\cC_{\rms}]$ is of the form~\eqref{zero char form} and all eigenvalues of $(\sS_n,\cA)$ are real, negative, and they fulfill $\lambda_{1}^-\leq \ldots \leq \lambda_n^-<\lambda_1^+\leq \ldots \leq \lambda^+_{n}<0$. Since $D>0$, the second last block rows and columns of size $\ell$ in~\eqref{zero char form} are absent. 
        Let $\tfG(s)$ denote the transfer function of $[\cA,\cB,\cC]$ and let $[\cA_{\rmb},\cB_{\rmb},\cC_{\rmb}]$ be a~positive real balanced realization of the form~\eqref{balanced so2} which exists by Theorem~\ref{prbt second order}. 
        Due to Remark~\ref{rem:struc redsysso} we can w.\,l.\,o.\,g. assume that $[\widetilde{\cA},\widetilde{\cB},\widetilde{\cC}]=[T\T\cA_{\rmb}T,T\T\cB_{\rmb},\cC_{\rmb}T]$  for 
        $T = \diag \left(\begin{bsmallmatrix} I_{r} \\ 0 \end{bsmallmatrix},\begin{bsmallmatrix} 0 \\ I_{r} \end{bsmallmatrix}\right)$. Let
        \begin{equation*}
         [\widetilde{\cA}_{\rmz},\widetilde{\cB}_{\rmz},\widetilde{\cC}_{\rmz},\cD_{\rmz}] = [\wtT\T\cA_{\rmz}\wtT,\wtT\T\cB_{\rmz},\cC_{\rmz}\wtT,\cD_{\rmz}]
         \end{equation*}
        for $\wtT:=\diag\left(\begin{bsmallmatrix}0\\I_{r-m}\end{bsmallmatrix},\begin{bsmallmatrix}I_{r-m}\\0\end{bsmallmatrix}\right)$ and $[\cA_{\rmz},\cB_{\rmz},\cC_{\rmz},\cD_{\rmz}]$ as in~\eqref{eq:factor of inverse of Gr}.
        Since the eigenvalues of $\cA_{\rmb}$ are a~subset of the eigenvalues of $\cA$ they are all real and the eigenvalues of negative type of $(\diag(-I_{m+p_1},I_{p_2+m}),\cA_{\rmb})$ are all strictly smaller than the eigenvalues of positive type of $(\diag(-I_{m+p_1},I_{p_2+m}),\cA_{\rmb})$. Now the tuple $(\diag(-I_{m+p_1},I_{p_2+m}),\cA_{\rmb})$ and reduction matrices $V:=T$ and $W\T:=T\T$ fulfill the assumptions of Lemma~\ref{exceptional case subspace} and hence, so does $(T\T\diag(-I_{m+p_1},I_{p_2+m}) T,T\T \cA_{\rmb} T)=(\sS_r,\widetilde{\cA})$ and $\wtT$. It follows that the eigenvalues of $(\sS_r,\widetilde{\cA})$ and of  $(\sS_{r-m},\widetilde{A}_{\rmz})$ are all real and fulfill $\tilde{\lambda}_{1}^-\leq \ldots \leq \tilde{\lambda}_r^-<\tilde{\lambda}_1^+\leq \ldots \leq \tilde{\lambda}^+_{r}$ and $\tilde{\mu}_{1}^-\leq \ldots \leq \tilde{\mu}_{r-m}^-<\tilde{\mu}_1^+\leq \ldots \leq \tilde{\mu}^+_{r-m}$, respectively. On the other hand, the system $[\widetilde{\cA}_{\rmz},\widetilde{\cB}_{\rmz},\widetilde{\cC}_{\rmz},\cD_{\rmz}]$ is received by positive real balanced truncation of the asymptotically stable and positive real balanced (and thus, minimal) system $[\cA_{\rmz},\cB_{\rmz},\cC_{\rmz},\cD_{\rmz}]$. Now, the discussion in Section~III of~\cite{HarsJonc83} provides that the system $[\widetilde{\cA}_{\rmz},\widetilde{\cB_{\rmz}},\widetilde{\cC}_{\rmz},\cD_{\rmz}]$ is asymptotically stable as well and hence, $\tilde{\mu}^+_{r-m}<0$. In particular, $\widetilde{\cA}_{\rmz}$ is invertible. Since by Theorem~\ref{prbt second order},  $\hA_{16}\in\Gl_m(\R)$, which is the block matrix on the upper right of $\cA_{\rmb}$ and also of $\widetilde{\cA}$, it follows from the block form of $\widetilde{\cA}$ that it is invertible as well. By the passivity of the reduced system we have that  $\tilde{\lambda}^+_{r}\leq0$ and hence, $\tilde{\lambda}^+_{r}<0$. We have now shown that property a) from Theorem~\ref{overdamping and signs} is fulfilled for the pole sign characteristics of the reduced system.
        
        Since the second condition of Theorem~\ref{thm:eq cond so} is fulfilled, we follow the steps ``$ii)\Rightarrow iii)$'' and ``$iii)\Rightarrow i)$'' to obtain a~state space transformation $\brT\in\Gl_{2r}(\R)$ such that $\brT^{-1}\widetilde{\cA}\brT$, $\brT^{-1}\widetilde{\cB}$ and $\widetilde{\cC}\brT$ are structured as in~\eqref{eq:ABstruc2} for some matrices $\wtG\in\Gl_r(\R)$, $\widetilde{B}\in\R^{r\times m}$ and some symmetric matrix $\wtD\in\R^{r\times r}$. Note that the minimality assumption of Theorem~\ref{thm:eq cond so} is not needed in order to derive the transformation $\brT$. This shows that $\tilde{\tfG}(s)$ possesses a~second order realization of the form~\eqref{eqn:sosys} with symmetric coefficients $\wtM= I_r$, $\wtK= \wtG\wtG\T$, and $\wtD$. Now Theorem~\ref{overdamping and signs} provides that it fulfills $\wtM,\wtD,\wtK>0$ and~\eqref{overdamping condition}.
		The rest of the theorem follows from Corollary~\ref{overdamping and real}.
    \end{proof}

\end{section}


\begin{section}{Numerical aspects}\label{sec:algorithm}
	
	This part is devoted to the numerical issues of the presented results. 
	One problem that occurs considering the original large-scale system is that we need some factorization of the mass and stiffness matrices $M$ and $K$. Here, for many applications in mechanics those are band matrices or have an equally sparse structure, a property that one can be exploited to compute sparse Cholesky factorizations.
	
	The bottleneck in the computation of the reduced-order system the the determination of the minimal solution of the KYP inequality $\sW_{[\cA,\cB,\cC,0]}(P)\leq 0$. Here one can use the method from \cite{PoloReis12}, which provides the minimal solution directly in factored form. That is, this method delivers low rank approximations of the type $P_{\min}\approx L\T L$ for a~matrix $L$ which has a small number of rows compared to the number of columns. We arrive at the following numerical procedure.\\ \newline 
\textbf{Numerical procedure: Second order positive real balanced truncation and structure recovery.}
		For a~second order system of the form~\eqref{eqn:sosys}, where $M,K>0$ and $D\geq 0$, compute a~reduced system of the form~\eqref{eqn:redsosys}, where $\wtM,\wtK>0$ and $\wtD=\wtD\T$. 
		\\
	{\bf Inputs:} Matrices $M,K>0$, $D\geq 0$ and $B$, a~tolerance value $\tol>0$. \\
	{\bf Outputs:} Matrices $\wtM,\wtK>0$, $\wtD=\wtD\T$, and $\widetilde{B}$, gap metric error bound $\err$.
	\begin{enumerate}[1)]
		\item Start with the reduction by postitive real balanced truncation as presented in Section~\ref{sec: prbt so}:
	\begin{enumerate}[a)]
		\item Use the sparse Cholesky factors $G$ and $H$ of $M,K$ to derive the first order representation~\eqref{eq:linsys1}.
		\item Compute the low rank Cholesky factor $L\T L=P_{\min}$. One can use for example the Lur'e solver, introduced in~\cite{PoloReis12}, that computes low rank factors of a~$P>0$ from a~\emph{stabilizing solution triple} $(P,\cK,\cL)\in\R^{2n\times 2n}\times \R^{m\times 2n}\times \R^{m\times m}$ of the so called \emph{Lur'e equation}
		\begin{equation}\label{LMI ODE2}
		\begin{bmatrix}
		\cA^{\top}P+P \cA & P \cB-\cC^{\top} \\
		\cB^{\top}P-\cC & 0
		\end{bmatrix}+\begin{bmatrix}
		\cK\T  \\ \cL\T
		\end{bmatrix}\begin{bmatrix}
		\cK& \cL
		\end{bmatrix}=0,
		\end{equation}
		where the term stabilizing refers to the rank condition
		\begin{equation*}
		\rank \begin{bmatrix}
		\lambda I_{2n} +\cA & \cB \\ \cK & \cL
		\end{bmatrix}= n+\rank \begin{bmatrix}
		\cK & \cL
		\end{bmatrix} \quad \text{ for all } \ \lambda\in\C^+.
		\end{equation*}
		\item Take the partitioned eigendecomposition
		\begin{equation}\label{Sigma pm12 alg}
		L\sS_nL\T =\begin{bmatrix}U^-_{1}&U_{2}&U^+_{1}\end{bmatrix}\begin{bmatrix}-\Sigma^-_{1}&0&0\\0&\sS_{n-r}\Sigma_{2}&0\\0&0&\Sigma^+_{1}\end{bmatrix}
		\begin{bmatrix}(U^-_{1})\T\\(U_{2})\T\\(U^+_{1})\T\end{bmatrix},
		\end{equation}
		and define reduction matrices
		$
		W\T:=\Sigma_1^{-\frac12}\sS_rU_1\T L$ and $V:=\sS_nL\T U_1\Sigma_1^{-\frac12},
		$
		where $\Sigma_1:=\diag(\Sigma_1^-, \Sigma_1^+)$ and $U_1:=\begin{bmatrix}U^-_{1} & U^+_{1}\end{bmatrix}$, to
		compute a reduced first order model
		\begin{equation}
		\dot{\wtx}(t) = \widetilde{\cA}\wtx(t)+\widetilde{\cB} u(t), \quad
		\wty(t)=\widetilde{\cC} \wtx(t),
		\label{eq:redfo alg}
		\end{equation}
		where $\widetilde{\cA}:=W\T \cA V$, $\widetilde{\cB}:=W\T \cB$ and $\widetilde{\cC}:=\cC V$. The gap metric error bound $\err$ is twice the sum of the diagonal entries of $\Sigma_2$.
	\end{enumerate}
	\item Continue with the second order structure recovery from Section~\ref{sec:balancing} and Section~\ref{sec:constMDK}.
	\begin{enumerate}[a)]
		\item If necessary apply a~block orthogonal state space transformation as in Remark~\ref{rem:struc redsysso} to derive a~realization $[\widetilde{\cA}_{\rmb},\widetilde{\cB}_{\rmb},\widetilde{\cC}_{\rmb}]$ of the form~\eqref{balanced so2} with $p_1=p_2=\widetilde{k}$.
		\item For $\widetilde{\cA}_{\rmz}:=\begin{bsmallmatrix}
		\widetilde{\cA}_{22} & \widetilde{\cA}_{23} \\ -\widetilde{\cA}_{23}\T & \widetilde{\cA}_{33}
		\end{bsmallmatrix}\in\R^{2\widetilde{k}\times 2\widetilde{k}}$, compute $\wtT\in\Gl_{2\widetilde{k}}(\R)$ such that $\wtT^{-1} \widetilde{\cA}_{\rmz}\wtT=\diag(\brA_{22},\brA_{33})$ with $\brA_{22}=\diag(\widetilde{\mu}_1,\ldots,\widetilde{\mu}_{2\widetilde{k}})$ and $\brA_{33}=\diag(\cP_{\sigma_1,\tau_1},\ldots,\cP_{\sigma_c,\tau_c})$, where $\cP_{\sigma_1,\tau_1}=\begin{bsmallmatrix}\sigma_i & \tau_i \\ -\tau_i & \sigma_i\end{bsmallmatrix}$. Perform a state space transformation with the matrix $\diag(I_{m+\ell},\wtT,I_{m+\ell})$ to derive a~realization $[\brA,\brB,\brC]$ that we partition as 
		\begin{equation}\label{eq:blockform redsys alg}
		\begin{split}
		\brA=&\begin{bmatrix}
		0 & 0 & 0 & \brA_{14} \\
		0 & \brA_{22} & 0 & \brA_{24} \\
		0 & 0 & \brA_{33} & \brA_{34} \\
		-\brA_{14}\T & \brA_{42} & \brA_{43} & \brA_{44}
		\end{bmatrix}, \ \brA_{24}=\begin{bmatrix}
		\ba_1 \\ \vdots \\ \ba_{2\widetilde{k}}
		\end{bmatrix}, \ \brA_{42}\T=\begin{bmatrix}
		\bb_1 \\ \vdots \\ \bb_{2\widetilde{k}}
		\end{bmatrix}, \\
		\widetilde{\cA}_{34}\T=&\begin{bmatrix}
		\bc_1 & \bd_1 & \cdots & \bc_{c} & \bd_c
		\end{bmatrix}, \
		\brA_{43}=\begin{bmatrix}
		\bolde_1 & \boldf_1 & \cdots & \bolde_{c} & \boldf_c
		\end{bmatrix}.
		\end{split}
		\end{equation}
	 \item To retrieve the symmetry structure construct a~transformation $\brT$:
        \begin{itemize}
            \item For $i\in\{1,\ldots,q\}$ choose $j\in\{1,\ldots,m\}$ such that $|a_{i,j}|\cdot|b_{i,j}|>\tol$, where $a_{i,j},b_{i,j}$ are the $j$th entry of $\ba_i$ and $\bb_i$, respectively, and set $\breve{t}_i:=\big|\tfrac{a_{i,j}}{b_{i,j}}\big|$. 
            \item For $i\in\{1,\ldots,c\}$ we either find some $j\in\{1,\ldots,m\}$ such that for the $j$'th entries of $\bc_i,\bd_i,\bolde_i$, and $\boldf_i$, which we call $c_{i,j},d_{i,j},e_{i,j},d_{i,j}$, respectively, $|c_{i,j}|,|f_{i,j}|,|e_{i,j}|,|d_{i,j}|>\tol$ or $\sigma_i+\iunit \tau_i$ is close to being an unobservable or uncontrollable mode in which case $|c_{i,j}|,|f_{i,j}|,|e_{i,j}|,|d_{i,j}|\leq\tol$ for all $j\in\{1,\ldots,m\}$. In the first case set $z_i^2:=\tfrac{d_i^2+c_i^2}{e_i^2+f_i^2}$ and find $\begin{bsmallmatrix}\widetilde{x}_i\\ \widetilde{y}_i\end{bsmallmatrix}\neq 0$ which solves
            \[\begin{bmatrix}
            e_i z_i^2 -d_i & -f_i z_i^2 -c_i \\
            f_i z_i^2 -c_i & e_i z_i^2 +d_i
            \end{bmatrix}\begin{bmatrix}\widetilde{x}_i\\ \widetilde{y}_i\end{bmatrix}=0.\]
            Scale the vector above via $\begin{bsmallmatrix}x_i\\ y_i\end{bsmallmatrix}:=\sqrt{\tfrac{z_i}{\widetilde{x}_i^2+\widetilde{y}_i^2}}\begin{bsmallmatrix}\widetilde{x}_i\\ \widetilde{y}_i\end{bsmallmatrix}$ and set
            $\brT_i:=\begin{bsmallmatrix}
            x_i & y_i \\ -y_i & x_i
            \end{bsmallmatrix}$. 
            \item If for all $j\in\{1,\ldots,m\}$ $|a_{i,j}|\cdot|b_{i,j}|\leq\tol$ or $|c_{i,j}|,|f_{i,j}|,|e_{i,j}|,|d_{i,j}|\leq\tol$ we can freely choose $\breve{t}_1=1$ and $\breve{T}_i=I_2$.
        \end{itemize}
        Define the transformation $\brT:=\diag(I_{m+\ell},\breve{t}_1,\ldots,\breve{t}_{2\widetilde{k}},\brT_1,\ldots,\brT_c,I_{m+\ell})$ to get a~realization $[\hA,\hB,\hC]:=[\brT^{-1}\brA \brT,\brT^{-1}\brB,\brC\brT]$. 
        Apply a~state space transformation $\breve{V} := \diag(I_{m+\ell+2\widetilde{k}},\Theta_1,\ldots,\Theta_c,I_{m+\ell})$, where $\Theta_i$ is taken from~\eqref{Pi} and exchange the rows and columns to get a~realization $[\cA_{\rmn},\cB_{\rmn},\cC_{\rmn}]$ of the form of~\eqref{input output normal form2}.

        \textbf{Note}:
        The fact that $z_i>0$ and thus, $\sqrt{z_i}\in\R$ is guaranteed by the following arguments: By following the steps in the proof of $ii) \Rightarrow iii)$ in Theorem~\ref{leading trailing signs}, we see that there exists a~state space transformation for the reduced system that leads to a~system of the form $[\cA_{\rmn},\cB_{\rmn},\cC_{\rmn}]$. Moreover, such a~state space transformation, except from the rows and columns corresponding to unobservable or uncontrollable modes, has to be of the form $\brT\breve{V}$. 
		
		\item If the submatrices $\cA_{44}=\diag(\widetilde{\mu}_1^+,\ldots,\widetilde{\mu}_{\widetilde{k}}^+)$ and $\cA_{55}=\diag(\widetilde{\mu}_{1}^-,\ldots,\widetilde{\mu}_{\widetilde{k}}^-)$ from $\cA_{\rmn}$ as in~\eqref{input output normal form2} do not fulfill $\cA_{44}-\cA_{55}>0$: find $0>\widetilde{\mu}_{q}^+\geq\ldots\geq\widetilde{\mu}_{\widetilde{k}+1}^+>\max\{\widetilde{\mu}_{\widetilde{k}}^-, \widetilde{\mu}_{\widetilde{k}}^+\}$ and $\widetilde{\mu}_{q}^-\leq\ldots\leq\widetilde{\mu}_{\widetilde{k}+1}^-<\min\{\widetilde{\mu}_1^-, \widetilde{\mu}_1^+\}$ and replace $\cA_{44}$, $\cA_{55}$, $\cA_{48}\T$, $\cA_{58}\T$ with $\brA_{44}:=\diag(\cA_{44},\Lambda^+)$, $\brA_{55}:=\diag(\Lambda^-,\cA_{55})$, $\brA_{48}\T:=\begin{bmatrix}
		\cA_{48}\T & 0
		\end{bmatrix}$, and $\brA_{58}\T:=\begin{bmatrix}
		0 & \cA_{58}\T
		\end{bmatrix}$, where $\Lambda^-=\diag(\widetilde{\mu}_{q}^-,\ldots,\widetilde{\mu}_{\widetilde{k}+1}^-)$ and $\Lambda^+=\diag(\widetilde{\mu}_{\widetilde{k}+1}^+,\ldots,\widetilde{\mu}_{q}^+)$ such that $\brA_{44}-\brA_{55}>0$. Abusing notations we rename $\brA_{55}=\diag(\widetilde{\mu}_1^-,\ldots,\widetilde{\mu}_{q}^-)$.
		\item For $i=1,\ldots,q$ set $a_i:= \sqrt{\tfrac{\widetilde{\mu}_i^-}{\widetilde{\mu}_i^--\widetilde{\mu}_i^+}}$, $b_i:=\sqrt{\tfrac{\widetilde{\mu}_i^+}{\widetilde{\mu}_i^--\widetilde{\mu}_i^+}}$ and
		\begin{equation*}
			T_i:=\begin{bmatrix}
			a_i & b_i \\ b_i & a_i
			\end{bmatrix}, \quad T:=\diag(I_{m+\ell},T_1,\ldots,T_{2\hat{k}},I_{\ell+m}).
		\end{equation*}
		Apply the state space transformation $T$ to the system.
		\item Suitable block exchanges brings the system into a realization $[\cA_{\rms},\cB_{\rms},\cC_{\rms}]$ of the block form~\eqref{zero char form} from Lemma~\ref{inverse real lemma}. A second order realization of $\tilde{\tfG}(s)$ is given by:
		\begin{equation*}
			\begin{split}
			\ddot{\wtp}(t)+\underbrace{\begin{bmatrix}
			D_{11} & D_{12} \\ D_{12}\T & D_{22}
			\end{bmatrix}}_{=:\wtD}\dot{\wtp}(t)+\underbrace{\begin{bmatrix}
			0 & G_{21}\T \\ G_{12}\T & G_{22}\T
			\end{bmatrix}\begin{bmatrix}
			0 & G_{12} \\ G_{21} & G_{22}
			\end{bmatrix}}_{=:\wtK}\wtp(t)=&\underbrace{\begin{bmatrix}
			0 \\ \hB_6
			\end{bmatrix}}_{=:\widetilde{B}} u(t), \\ \wty(t)=&\widetilde{B}\T\dot{\wtp}(t).
			\end{split}
		\end{equation*}
	\end{enumerate}
	\end{enumerate}	
	\begin{remark}\label{jordan blocks}
		In the case that the submatrix $\widetilde{\cA}_{\rmz}$ of $\wtA$ as in~\eqref{eq:factor of inverse of Gr} does not have semi-simple eigenvalues, one can perturb the blocks of $\widetilde{\cA}_{\rmz}$ corresponding to positive real characteristic values lower than one. In doing so, if additionally $\widetilde{\cC}\widetilde{\cA}\widetilde{\cB}>0$, the newly formed system $[\widetilde{\cA}+\Delta,\widetilde{\cB},\widetilde{\cC}]$, for some sufficiently small $\|\Delta\|_2$, will then still be passive, see~\cite{BeatMehr19}. An $H^{\infty}$-error bound can be computed.
	\end{remark}
	
We illustrate the performance of the above procedure with an~example of three coupled mass-spring-damper
chains, see~\cite[Ex.~2]{TruhVese09}.

\begin{example}\label{ex:msd}
	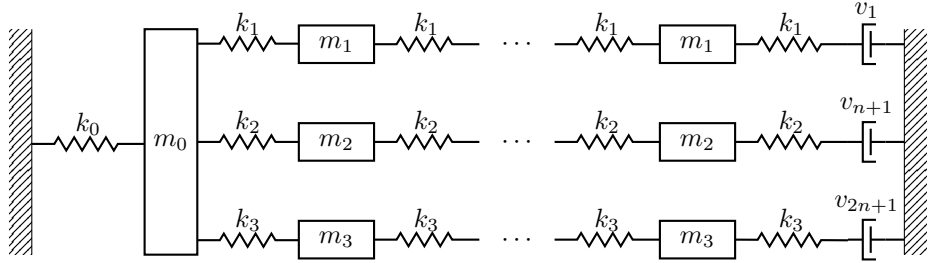
\begin{figure}
	\centering
\begin{tikzpicture}[every node/.style={draw,outer sep=0pt,thick}]
\tikzstyle{spring}=[thick,decorate,decoration={zigzag,pre length=0.3cm,post length=0.3cm,segment length=6}]
\tikzstyle{damper}=[thick,decoration={markings,  
	mark connection node=dmp,
	mark=at position 0.5 with 
	{
		\node (dmp) [thick,inner sep=0pt,transform shape,rotate=-90,minimum width=15pt,minimum height=3pt,draw=none] {};
		\draw [thick] ($(dmp.north east)+(2pt,0)$) -- (dmp.south east) -- (dmp.south west) -- ($(dmp.north west)+(2pt,0)$);
		\draw [thick] ($(dmp.north)+(0,-5pt)$) -- ($(dmp.north)+(0,5pt)$);
	}
}, decorate]
\tikzstyle{ground}=[fill,pattern=north east lines,draw=none,minimum width=0.75cm,minimum height=0.3cm]

\begin{scope}[xshift=7cm]
\node (M0) [minimum width=0.5cm, minimum height=3cm] {$m_0$};

\node (wall) [ground, rotate=-90, minimum width=3cm,yshift=-2cm] {};
\draw (wall.north east) -- (wall.north west);

\draw [spring] (wall.80) -- ($(M0.north west)!(wall.80)!(M0.south west)$) node[draw=none,fill=none,pos=0.5,above] {$k_0$};

\node (M1) [minimum width=1cm, minimum height=0.5cm, xshift=+2.2cm, yshift=+1.3cm] {$m_1$};
\draw [spring] (M0.75) -- ($(M1.north west)!(M0.75)!(M1.south west)$) node[draw=none,fill=none,pos=0.5,above] {$k_1$};

\node (M2) [minimum width=1cm, minimum height=0.5cm, xshift=+2.2cm, yshift=+0.0cm] {$m_2$};
\draw [spring] (M0.0) -- ($(M2.north west)!(M0.0)!(M2.south west)$) node[draw=none,fill=none,pos=0.5,above] {$k_2$};

\node (M3) [minimum width=1cm, minimum height=0.5cm, xshift=+2.2cm, yshift=-1.30cm] {$m_3$};
\draw [spring] (M0.-75) -- ($(M3.north west)!(M0.-75)!(M3.south west)$) node[draw=none,fill=none,pos=0.5,above] {$k_3$};

\node (dots1) [draw=none,minimum width=1cm, minimum height=0.5cm, xshift=+4.6cm, yshift=+1.3cm] {$\bold{\cdots}$};
\draw [spring] (M1.0) -- ($(dots1.north west)!(M1.0)!(dots1.south west)$) node[draw=none,fill=none,pos=0.5,above] {$k_1$};

\node (dots2) [draw=none,minimum width=1cm, minimum height=0.5cm, xshift=+4.6cm, yshift=+0.0cm] {$\bold{\cdots}$};
\draw [spring] (M2.0) -- ($(dots2.north west)!(M2.0)!(dots2.south west)$) node[draw=none,fill=none,pos=0.5,above] {$k_2$};

\node (dots3) [draw=none,minimum width=1cm, minimum height=0.5cm, xshift=+4.6cm, yshift=-1.30cm] {$\bold{\cdots}$};
\draw [spring] (M3.0) -- ($(dots3.north west)!(M3.0)!(dots3.south west)$) node[draw=none,fill=none,pos=0.5,above] {$k_3$};

\node (M12) [minimum width=1cm, minimum height=0.5cm, xshift=+7cm, yshift=+1.3cm] {$m_1$};
\draw [spring] (dots1.0) -- ($(M12.north west)!(dots1.0)!(M12.south west)$) node[draw=none,fill=none,pos=0.5,above] {$k_1$};

\node (M22) [minimum width=1cm, minimum height=0.5cm, xshift=+7cm, yshift=+0.0cm] {$m_2$};
\draw [spring] (dots2.0) -- ($(M22.north west)!(dots2.0)!(M22.south west)$) node[draw=none,fill=none,pos=0.5,above] {$k_2$};

\node (M32) [minimum width=1cm, minimum height=0.5cm, xshift=+7cm, yshift=-1.3cm] {$m_3$};
\draw [spring] (dots3.0) -- ($(M32.north west)!(dots3.0)!(M32.south west)$) node[draw=none,fill=none,pos=0.5,above] {$k_3$};

\draw [damper] (9,1.3) -- ($(9.5,1.3)!(9,1.3)!(9.5,1.3)$) node[draw=none,fill=none,pos=0.5,above,label=$v_1$] {};
\draw [spring] (M12.0) -- ($(9,1.3)!(M12.0)!(9,1.3)$) node[draw=none,fill=none,pos=0.5,above] {$k_1$};

\draw [damper] (9,0) -- ($(9.5,0)!(9,0)!(9.5,0)$) node[draw=none,fill=none,pos=0.5,above,label=$v_{n+1}$] {};
\draw [spring] (M22.0) -- ($(9,0)!(M22.0)!(9,0)$) node[draw=none,fill=none,pos=0.5,above] {$k_2$};

\draw [damper] (9,-1.3) -- ($(9.5,-1.3)!(9,-1.3)!(9.5,-1.3)$) node[draw=none,fill=none,pos=0.5,above,label=$v_{2n+1}$] {};
\draw [spring] (M32.0) -- ($(9,-1.3)!(M32.-1.3)!(9,-1.3)$) node[draw=none,fill=none,pos=0.5,above] {$k_3$};

\draw[thick] (9.5,1.3) -- (9.75,1.3);
\draw[thick] (9.5,0) -- (9.75,0);
\draw[thick] (9.5,-1.3) -- (9.75,-1.3);

\end{scope}

\begin{scope}

\node (wallr) [ground, rotate=+90, minimum width=3cm,yshift=-16.9cm] {};
\draw (wallr.north west) -- (wallr.north east);

\end{scope}

\end{tikzpicture}	
\caption{Triple chain oscillator with $(3n + 1)$ masses and three dampers.}
\label{fig:triplechain}
\end{figure}
The triple chain consists of three rows that are coupled via a~mass $m_0$ which is connected to the fixed base with a spring with stiffness $k_0$. Each row contains $n$ masses, $n+1$ springs and one damper. The latter is attached to a~wall, see Figure~\ref{fig:triplechain}. One can write the free system as 
\begin{equation*}
M \ddot{x}(t) + D\dot{x}(t) +Kx(t) = 0,
\end{equation*}
where $M,D$, and $K$ are defined as $M=\diag(m_1,\ldots,m_1,m_2,\ldots,m_2,m_3,\ldots,m_3)$ and $D=\alpha M +\beta K + v(e_1e_1\T+e_{n+1}e_{n+1}\T+e_{2n+1}e_{2n+1}\T)$, where $e_i$ denotes the $i$'th unit vector in $\mathbb{R}^n$ and with the dampers' viscosity $v$. Moreover,
\begin{equation*}
K=\begin{bmatrix}
K_{11} & & & -\kappa_1 \\
& K_{22} & &  -\kappa_2 \\
& & K_{33} & -\kappa_3 \\
-\kappa_1\T & -\kappa_2\T & -\kappa_3\T & k_1+k_2+k_3+k_0
\end{bmatrix}, \ K_{ii}=k_i\begin{bmatrix}
2 & -1 \\
-1 & 2 & -1 \\
& \ddots & \ddots & \ddots \\
& & -1 & 2 & -1 \\
& & & -1 & 2
\end{bmatrix},
\end{equation*}
where $\kappa_i = \begin{bmatrix}
0 & \ldots & 0 & k_i
\end{bmatrix}\T\in\R^{1\times n}$ and $K_{ii}\in\R^{n\times n}$ for $i=1,2,3$.
We choose the input $b=\begin{bmatrix}
1 & \ldots & 1
\end{bmatrix}$ and equally measure the velocities $c_{\mathrm{v}}=\begin{bmatrix}
1 & \ldots & 1
\end{bmatrix}\T$. The second order control system then reads
\begin{equation*}
M \ddot{p}(t) + D\dot{p}(t) +Kp(t) = bu(t), \quad y(t) = c_{\mathrm{v}}\dot{p}(t).
\end{equation*}
We consider the triple chain with $n=500$, thus the number of positions is $3n+1=1501$, $k_0=50$, $k_1=10$, $ k_2=20$, $ k_3=1$, $m_0=1$, $m_1=1$, $m_2=2$, $m_3=3$, $\alpha=\beta=0.002$ and $v=5$. Following the previously presented numerical procedure we first compute a~first order reduced model of order $2r=300$. Applying Step~2) in this procedure, we obtain a~second order model of order $r=150$. The latter has the form
\begin{align*}
\begin{aligned}
\wtM\ddot{\wtp}(t) + \wtD\dot{\wtp}(t) + \wtK \wtp(t) & = \widetilde{B} u(t), &
\wty(t) & = \widetilde{B}^{\top}\dot{\wtp}(t),
\end{aligned}
\end{align*}
with symmetric $\wtM, \ \wtD,\, \wtK \in \mathbb{R}^{r \times r}$, where $\wtM=I_r$, $K>0$ and $D$ has one negative eigenvalue $\lambda\approx -3.535\cdot10^{-2}$ and its largest eigenvalue is $\lambda_{\max}\approx 3.162\cdot10^0$.
The sigma plots of the original and reduced transfer function $\tfG(s)$ and $\tilde{\tfG}(s)$ together are depicted in Figure~\ref{fig:systems}, whereas Figures~\ref{fig:errorabs} and~\ref{fig:errorrel} show the absolute and relative error of the transfer function evaluated on the imaginary axis. With a maximum relative error of approximately $4.3\cdot 10^{-2}$ we obtain a good match between the original and the reduced second order system.
\begin{figure}[t]
	\subfigure[Sigma plots of the original and the reduced transfer functions.\label{fig:systems}]{\input{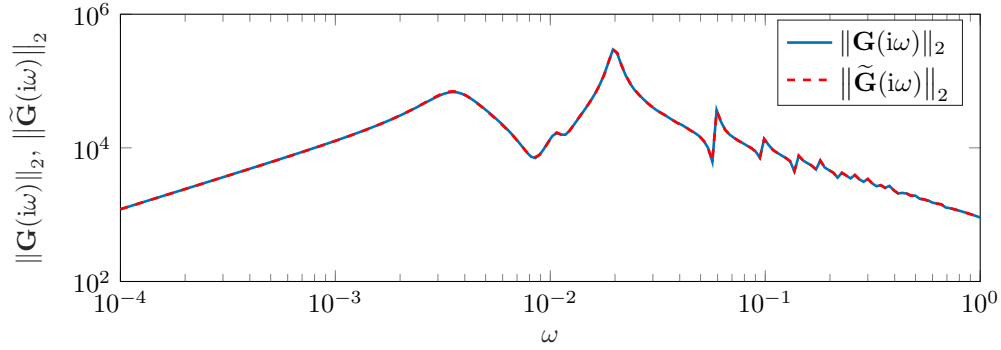}
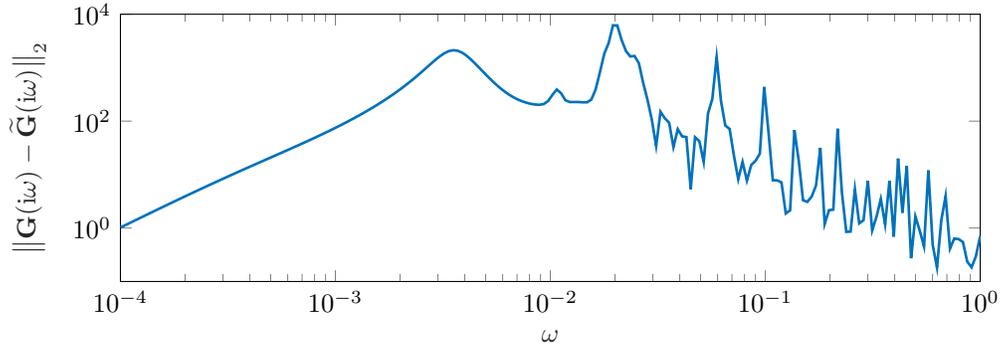
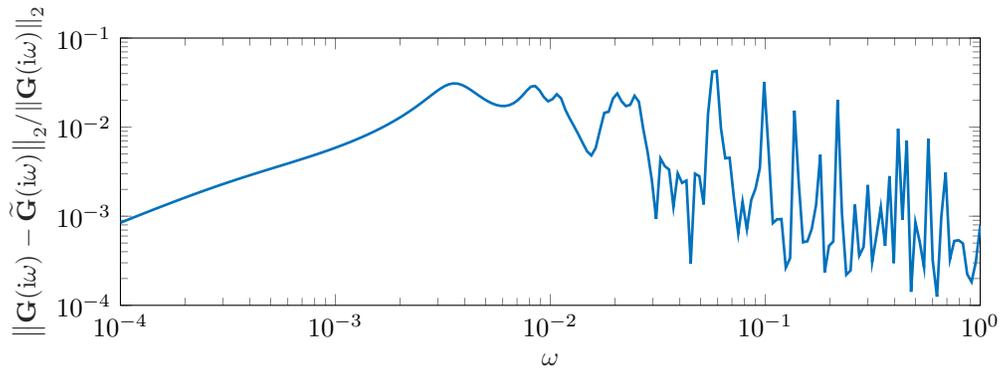}
	\subfigure[Absolute error.\label{fig:errorabs}]{
%
%
\definecolor{mycolor1}{rgb}{0.00000,0.44700,0.74100}%
\begin{tikzpicture}

\begin{axis}[%
width=4.5in,
height=1.4in,
scale only axis,
xmode=log,
xmin=0.0001,
xmax=1,
xminorticks=true,
xlabel style={font=\color{white!15!black}},
xlabel={$\omega$},
ymode=log,
ymin=0.1,
ymax=10000,
yminorticks=true,
ylabel style={font=\color{white!15!black}},
ylabel={$\big\|\mathbf{G}(\mathrm{i}\omega)- \widetilde{\mathbf{G}}(\mathrm{i}\omega)\big\|_2$},
axis background/.style={fill=white},
legend style={legend cell align=left, align=left, draw=white!15!black}
]
\addplot [color=mycolor1, line width = 1pt]
  table[row sep=crcr]{%
0.0001	1.02101146048168\\
0.000104737089795945	1.11839687154114\\
0.000109698579789238	1.22491009373388\\
0.000114895100018731	1.34137673391497\\
0.000120337784077759	1.46869177636815\\
0.000126038292967973	1.6078245502867\\
0.000132008840083142	1.75982393451838\\
0.000138262217376466	1.92582380192347\\
0.000144811822767453	2.1070487082628\\
0.000151671688847092	2.30481983507549\\
0.000158856512942805	2.52056120301803\\
0.000166381688607613	2.75580618236184\\
0.000174263338600965	3.01220434157995\\
0.000182518349431904	3.29152869414481\\
0.00019116440753857	3.59568342841369\\
0.000200220037181558	3.92671223730317\\
0.000209704640132323	4.28680740339141\\
0.000219638537241655	4.67831984269097\\
0.000230043011977292	5.10377036640513\\
0.000240940356023952	5.56586248528444\\
0.000252353917043477	6.06749715505791\\
0.000264308148697411	6.61178994385786\\
0.000276828663039207	7.20209119196478\\
0.000289942285388288	7.84200982965227\\
0.000303677111803546	8.53544161866082\\
0.000318062569279412	9.28660268539438\\
0.000333129478793467	10.1000693179013\\
0.000348910121340677	10.9808251039754\\
0.000365438307095725	11.9343165946861\\
0.000382749447851631	12.9665187895972\\
0.000400880632889846	14.0840118612723\\
0.000419870708444391	15.2940706755598\\
0.000439760360930272	16.6047688323304\\
0.00046059220411451	18.0250991620583\\
0.000482410870416537	19.565112887962\\
0.000505263106533568	21.2360800213925\\
0.000529197873595844	23.0506740293374\\
0.000554266452066311	25.0231844285779\\
0.00058052255160949	27.1697617596776\\
0.000608022426164942	29.5087004217\\
0.000636824994471859	32.0607661630883\\
0.000666991966303012	34.8495766930272\\
0.000698587974678525	37.9020459933181\\
0.000731680714342719	41.2489055859232\\
0.000766341086800746	44.925319397483\\
0.000802643352225717	48.9716131577176\\
0.000840665288561833	53.4341447356174\\
0.000880488358164347	58.3663488093555\\
0.000922197882333432	63.8299982482775\\
0.000965883224115871	69.8967361932556\\
0.00101163797976621	76.6499478963945\\
0.00105956017927762	84.1870610675293\\
0.00110975249641207	92.6223893048346\\
0.00116232246867985	102.090667228244\\
0.00121738272773966	112.751470957515\\
0.00127505124071301	124.794777266332\\
0.0013354515629299	138.447993943535\\
0.00139871310264724	153.984898799695\\
0.00146497139830728	171.737062961775\\
0.00153436840893001	192.108514088268\\
0.00160705281826164	215.594624481626\\
0.00168318035333096	242.80648956529\\
0.00176291411809595	274.502378102825\\
0.00184642494289554	311.628126923664\\
0.00193389175045523	355.368456704201\\
0.00202550193923067	407.210708371615\\
0.00212145178491063	469.020551184657\\
0.00222194686093952	543.123858379789\\
0.00232720247896041	632.376144727646\\
0.00243744415012222	740.172719121058\\
0.00255290806823952	870.294629490432\\
0.00267384161583995	1026.37658630908\\
0.00280050389418363	1210.61055919169\\
0.00293316627839004	1421.12626245535\\
0.00307211299886176	1647.64768210012\\
0.00321764175025074	1866.36683293259\\
0.00337006432927193	2038.36547807572\\
0.00352970730273065	2119.03256363223\\
0.00369691270719502	2080.14494525917\\
0.00387203878181256	1929.09967409918\\
0.00405546073584083	1704.97129474954\\
0.0042475715525369	1455.20758421872\\
0.00444878283112759	1215.590163266\\
0.00465952566866468	1004.79738638234\\
0.00488025158365443	828.506714177448\\
0.00511143348344017	685.298579459653\\
0.00535356667741072	570.892302367115\\
0.00560716993820546	480.41888318926\\
0.00587278661318948	409.395725984584\\
0.0061509857885805	354.023174062661\\
0.00644236350872137	311.182354199978\\
0.00674754405311069	278.330586907209\\
0.00706718127392749	253.386616052934\\
0.00740195999691564	234.645560739328\\
0.00775259748862946	220.743983015758\\
0.00811984499318401	210.705869017833\\
0.00850448934180268	204.166934044019\\
0.00890735463861044	202.110902958402\\
0.00932930402628469	209.086659926452\\
0.0097712415353465	238.320993158078\\
0.0102341140210545	311.025769298515\\
0.0107189131920513	390.526320041179\\
0.0112266777351081	331.716750462105\\
0.0117584955405216	245.080140032174\\
0.0123155060329283	227.324714030202\\
0.0128989026125331	227.896000527759\\
0.0135099352119803	227.694700523544\\
0.0141499129743458	225.00237785968\\
0.0148202070579886	224.928702146476\\
0.0155222535742705	252.828846104085\\
0.0162575566644379	393.312137008356\\
0.017027691722259	834.012187812838\\
0.0178343087693191	1835.10516239159\\
0.0186791359902078	2883.34681233578\\
0.0195639834351706	6182.4238350924\\
0.0204907468981585	6173.04731729354\\
0.021461411978584	3298.76995061098\\
0.0224780583354873	2053.00883346569\\
0.0235428641432242	1621.82995879734\\
0.024658110758226	1654.37499754275\\
0.0258261876068267	1208.83561281027\\
0.0270495973046313	516.885721039166\\
0.0283309610183932	249.435828309472\\
0.0296730240818887	105.467972480287\\
0.0310786618778201	34.4269700364062\\
0.0325508859983506	149.803416799237\\
0.0340928506974681	113.045382286138\\
0.0357078596490046	93.7452655529113\\
0.037399373024788	32.6246726560988\\
0.0391710149080926	70.7053473733673\\
0.0410265810582719	51.9477420185144\\
0.0429700470432084	50.3153378418311\\
0.045005576757005	5.33912704908646\\
0.0471375313411672	50.0029116359819\\
0.04937047852839	42.0067590761056\\
0.0517092024289676	17.280130992059\\
0.0541587137807947	139.553106312187\\
0.0567242606849198	259.293802293052\\
0.0594113398496503	1568.53907434774\\
0.0622257083673023	238.763202369674\\
0.0651733960488243	82.6484159358204\\
0.0682607183427239	71.521871053227\\
0.0714942898659758	22.6168717127788\\
0.0748810385759002	8.19931464299111\\
0.0784282206133768	17.235145789978\\
0.0821434358491943	7.71952647580891\\
0.086034644166845	15.0892744065632\\
0.0901101825166502	18.076323568154\\
0.0943787827777538	24.7081268549566\\
0.0988495904662559	436.184897700112\\
0.103532184329566	62.2839397515985\\
0.108436596868961	7.81554067474492\\
0.113573335834311	7.80486812970209\\
0.118953406737032	7.15906464627012\\
0.124588336429501	1.87775401649177\\
0.13049019780144	2.14181951460328\\
0.136671635646201	68.1409429112677\\
0.143145893752348	18.1431480386682\\
0.149926843278605	3.30320646154375\\
0.157029012472938	3.0912528555238\\
0.164467617799466	3.88662192669682\\
0.172258596539879	6.27506923484162\\
0.180418640939207	31.6822319904211\\
0.188965233969121	1.19301059513438\\
0.197916686785356	2.15523852323635\\
0.207292177959537	2.20914512422199\\
0.21711179456945	72.2265444780631\\
0.227396575235793	4.56783564707026\\
0.238168555197616	0.848255557016825\\
0.249450813523032	0.858817139273358\\
0.261267522556333	5.29014590352296\\
0.273643999707467	1.22250078607154\\
0.286606761694825	1.40141071165608\\
0.300183581357559	7.66980421974604\\
0.31440354715915	0.903933518668502\\
0.329297125509715	1.6252498985977\\
0.344896226040576	3.48667640932117\\
0.361234269970943	1.16973022838936\\
0.378346261713193	7.6071647020214\\
0.396268863870148	0.690295821239438\\
0.415040475785048	19.9485531041687\\
0.434701315812503	1.92645419889013\\
0.455293507486695	14.5765263873917\\
0.476861169771447	0.272708169374685\\
0.499450511585514	1.64693478783878\\
0.523109930805626	0.894026729414142\\
0.547890117959394	0.454153591660581\\
0.573844164830239	11.9874771045853\\
0.601027678207038	0.486151890153652\\
0.629498899022189	0.183422917475906\\
0.659318827133355	1.39737727976444\\
0.690551352016233	3.95841372449192\\
0.723263389648353	0.411828418603738\\
0.757525025877191	0.635724623021809\\
0.793409666579749	0.62128135486955\\
0.83099419493534	0.546967461403733\\
0.870359136148517	0.235332778554203\\
0.911588829975082	0.184162198432043\\
0.954771611420806	0.295741779737692\\
1	0.70952105639393\\
};
\end{axis}
\end{tikzpicture}
	\subfigure[Relative error.\label{fig:errorrel}]{
%
%
\definecolor{mycolor1}{rgb}{0.00000,0.44700,0.74100}%
\begin{tikzpicture}

\begin{axis}[%
width=4.5in,
height=1.4in,
scale only axis,
xmode=log,
xmin=0.0001,
xmax=1,
xminorticks=true,
xlabel style={font=\color{white!15!black}},
xlabel={$\omega$},
ymode=log,
ymin=0.0001,
ymax=0.1,
yminorticks=true,
ylabel style={font=\color{white!15!black}},
ylabel={$\big\|\mathbf{G}(\mathrm{i}\omega)- \widetilde{\mathbf{G}}(\mathrm{i}\omega)\big\|_2/{\|\mathbf{G}(\mathrm{i}\omega)\|}_2$},
axis background/.style={fill=white},
legend style={legend cell align=left, align=left, draw=white!15!black}
]
\addplot [color=mycolor1, line width = 1pt]
  table[row sep=crcr]{%
0.0001	0.000845979993547445\\
0.000104737089795945	0.000884712080823854\\
0.000109698579789238	0.000925091175166285\\
0.000114895100018731	0.000967170319312734\\
0.000120337784077759	0.00101100205704419\\
0.000126038292967973	0.00105663812111972\\
0.000132008840083142	0.0011041290969489\\
0.000138262217376466	0.00115352406552584\\
0.000144811822767453	0.00120487023078312\\
0.000151671688847092	0.00125821253850385\\
0.000158856512942805	0.00131359329618346\\
0.000166381688607613	0.00137105180577635\\
0.000174263338600965	0.00143062402405953\\
0.000182518349431904	0.00149234226836854\\
0.00019116440753857	0.00155623498846106\\
0.000200220037181558	0.00162232662839777\\
0.000209704640132323	0.00169063760502527\\
0.000219638537241655	0.00176118443208412\\
0.000230043011977292	0.0018339800205642\\
0.000240940356023952	0.00190903418666077\\
0.000252353917043477	0.00198635439805132\\
0.000264308148697411	0.00206594678719004\\
0.000276828663039207	0.0021478174564514\\
0.000289942285388288	0.00223197409429764\\
0.000303677111803546	0.00231842791412972\\
0.000318062569279412	0.00240719591829829\\
0.000333129478793467	0.00249830347919242\\
0.000348910121340677	0.002591787218032\\
0.000365438307095725	0.00268769815056879\\
0.000382749447851631	0.0027861050583926\\
0.000400880632889846	0.0028870980357555\\
0.000419870708444391	0.00299079215584002\\
0.000439760360930272	0.00309733119823007\\
0.00046059220411451	0.00320689138141898\\
0.000482410870416537	0.00331968505150909\\
0.000505263106533568	0.00343596429045326\\
0.000529197873595844	0.00355602442460323\\
0.000554266452066311	0.00368020743630812\\
0.00058052255160949	0.00380890530721257\\
0.000608022426164942	0.00394256335095111\\
0.000636824994471859	0.00408168362450507\\
0.000666991966303012	0.00422682854051226\\
0.000698587974678525	0.00437862483735891\\
0.000731680714342719	0.00453776809952791\\
0.000766341086800746	0.00470502805804063\\
0.000802643352225717	0.00488125494089178\\
0.000840665288561833	0.00506738718759603\\
0.000880488358164347	0.00526446089289437\\
0.000922197882333432	0.00547362140485399\\
0.000965883224115871	0.00569613757634695\\
0.00101163797976621	0.00593341926029057\\
0.00105956017927762	0.00618703875412606\\
0.00110975249641207	0.00645875704399408\\
0.00116232246867985	0.00675055588162005\\
0.00121738272773966	0.00706467695514847\\
0.00127505124071301	0.00740366969681008\\
0.0013354515629299	0.00777044961133895\\
0.00139871310264724	0.00816836940867588\\
0.00146497139830728	0.00860130566734335\\
0.00153436840893001	0.00907376419412998\\
0.00160705281826164	0.00959100757009589\\
0.00168318035333096	0.0101592083442064\\
0.00176291411809595	0.01078563046692\\
0.00184642494289554	0.0114788388784663\\
0.00193389175045523	0.0122489307865918\\
0.00202550193923067	0.0131077684297707\\
0.00212145178491063	0.0140691650552414\\
0.00222194686093952	0.0151489204140597\\
0.00232720247896041	0.0163644957253085\\
0.00243744415012222	0.0177339209245846\\
0.00255290806823952	0.0192731809945828\\
0.00267384161583995	0.020990780985271\\
0.00280050389418363	0.0228775148059943\\
0.00293316627839004	0.0248892435164366\\
0.00307211299886176	0.0269226416249205\\
0.00321764175025074	0.0287922319326019\\
0.00337006432927193	0.0302335366902661\\
0.00352970730273065	0.0309670862200156\\
0.00369691270719502	0.0308222496759749\\
0.00387203878181256	0.0298380177205607\\
0.00405546073584083	0.0282436232937525\\
0.0042475715525369	0.0263385810651676\\
0.00444878283112759	0.0243805698467648\\
0.00465952566866468	0.0225421111896046\\
0.00488025158365443	0.0209201780888473\\
0.00511143348344017	0.019563395322892\\
0.00535356667741072	0.0184968794711037\\
0.00560716993820546	0.0177397966038387\\
0.00587278661318948	0.0173171023666432\\
0.0061509857885805	0.0172682218109505\\
0.00644236350872137	0.0176549612640837\\
0.00674754405311069	0.0185693945900527\\
0.00706718127392749	0.0201365674294813\\
0.00740195999691564	0.0224820755405772\\
0.00775259748862946	0.0255456777523946\\
0.00811984499318401	0.0284653128589271\\
0.00850448934180268	0.0289771424882816\\
0.00890735463861044	0.025879885258736\\
0.00932930402628469	0.0217395221568069\\
0.0097712415353465	0.0195104391523682\\
0.0102341140210545	0.0206158983063629\\
0.0107189131920513	0.0234433579252034\\
0.0112266777351081	0.0210441314380375\\
0.0117584955405216	0.015603414181431\\
0.0123155060329283	0.0124824572621668\\
0.0128989026125331	0.0102143805253957\\
0.0135099352119803	0.00828005607818012\\
0.0141499129743458	0.00664143837902487\\
0.0148202070579886	0.00536502379590882\\
0.0155222535742705	0.00481195051693927\\
0.0162575566644379	0.0058345460237532\\
0.017027691722259	0.00928013429206434\\
0.0178343087693191	0.0144823677062835\\
0.0186791359902078	0.0148690167812608\\
0.0195639834351706	0.0209200333265934\\
0.0204907468981585	0.0238808752432881\\
0.021461411978584	0.0195207825180396\\
0.0224780583354873	0.0172202024076958\\
0.0235428641432242	0.0178086039462891\\
0.024658110758226	0.0224998179283616\\
0.0258261876068267	0.0193017332713318\\
0.0270495973046313	0.00960837829172625\\
0.0283309610183932	0.00531999248236858\\
0.0296730240818887	0.00254579505513301\\
0.0310786618778201	0.00093185169071272\\
0.0325508859983506	0.0044428795728779\\
0.0340928506974681	0.00365220740088766\\
0.0357078596490046	0.00333633008599885\\
0.037399373024788	0.00127443990522466\\
0.0391710149080926	0.00302013806433621\\
0.0410265810582719	0.00236593031456261\\
0.0429700470432084	0.00251493456107953\\
0.045005576757005	0.000293887304623887\\
0.0471375313411672	0.00300339003554999\\
0.04937047852839	0.00281399494830445\\
0.0517092024289676	0.0013547115823877\\
0.0541587137807947	0.0139204415109279\\
0.0567242606849198	0.0418689872165255\\
0.0594113398496503	0.042735583713135\\
0.0622257083673023	0.0096884206950146\\
0.0651733960488243	0.00446571865808644\\
0.0682607183427239	0.0045342543089391\\
0.0714942898659758	0.0015964133052643\\
0.0748810385759002	0.000640081183198567\\
0.0784282206133768	0.00145567624714152\\
0.0821434358491943	0.000714458029120311\\
0.086034644166845	0.00151874517647231\\
0.0901101825166502	0.002047070864437\\
0.0943787827777538	0.00346239559469766\\
0.0988495904662559	0.0320823961755038\\
0.103532184329566	0.005605405393643\\
0.108436596868961	0.000837094063199951\\
0.113573335834311	0.000924901818853074\\
0.118953406737032	0.000929104474669729\\
0.124588336429501	0.00026580068100379\\
0.13049019780144	0.000339024402607586\\
0.136671635646201	0.0153497712325274\\
0.143145893752348	0.00238542121290885\\
0.149926843278605	0.000510153924390784\\
0.157029012472938	0.000523376559879175\\
0.164467617799466	0.000717474352998991\\
0.172258596539879	0.00132300741124548\\
0.180418640939207	0.00490407165242296\\
0.188965233969121	0.000234428948052688\\
0.197916686785356	0.000464330624313674\\
0.207292177959537	0.000520542240518604\\
0.21711179456945	0.0202011162847141\\
0.227396575235793	0.00107830476935922\\
0.238168555197616	0.00022029624941868\\
0.249450813523032	0.000246226834681076\\
0.261267522556333	0.00134969105012766\\
0.273643999707467	0.000360199068759436\\
0.286606761694825	0.000451592266151072\\
0.300183581357559	0.00224421059764928\\
0.31440354715915	0.000307242341340314\\
0.329297125509715	0.000610469833785325\\
0.344896226040576	0.0012620037699512\\
0.361234269970943	0.000464723999079335\\
0.378346261713193	0.00282121245791905\\
0.396268863870148	0.000297718306556749\\
0.415040475785048	0.00959459422879285\\
0.434701315812503	0.000910135217755501\\
0.455293507486695	0.00705477083145203\\
0.476861169771447	0.000141909678406579\\
0.499450511585514	0.0008556904239802\\
0.523109930805626	0.000516777328585308\\
0.547890117959394	0.000268125159956625\\
0.573844164830239	0.00737933606034987\\
0.601027678207038	0.000321650832627803\\
0.629498899022189	0.00012508612235855\\
0.659318827133355	0.000979458000309029\\
0.690551352016233	0.00309232420029029\\
0.723263389648353	0.00032962214949328\\
0.757525025877191	0.000527886672611568\\
0.793409666579749	0.00053804552715206\\
0.83099419493534	0.000495258406617371\\
0.870359136148517	0.000223191532532736\\
0.911588829975082	0.000183243349394674\\
0.954771611420806	0.000309321163229072\\
1	0.000783248687108288\\
};

\end{axis}
\end{tikzpicture}
	\caption{Sigma plots of th original and reduced transfer functions as well as the absolute and relative errors for Example~\ref{ex:msd}.}
	\label{fig:fig}
\end{figure}
\end{example}
\end{section}


\begin{section}{Conclusion}
	
In this article we have presented a~numerical procedure for model reduction of passive second order systems that preserves asymptotic stability, passivity and restores the second order structure. Since the underlying reduction method is positive real balanced truncation we have received the gap metric error bound from~\cite{GuivOpme13}. One remaining question would be how to restore the definiteness of the damping matrix, as this might be lost. In detail, there exist systems that are asymptotically stable and passive, which can be written as  a~minimal second order system~\eqref{eqn:sosys}, where $M,K>0$ and $D=D\T$ is indefinite. Hence, to establish properties that guarantee the existence of a second order system with definite damping matrix might be an interesting topic for future research.
\end{section}

\section*{Acknowledgement}
Ines Dorschky thanks Serkan Gugercin for the hospitality and useful advises during the research stay at Virginia Tech.
\end{section}



\bigskip\bigskip
\bibliographystyle{siam}
\bibliography{BibDRV20}
\end{document}